\newcommand{\n}{\noindent}
\newcommand{\vp}{\varepsilon}
\newcommand{\bb}[1]{\mathbb{#1}}
\newcommand{\cl}[1]{\mathcal{#1}}
\newcommand{\ovl}{\overline}
\theoremstyle{plain}
\newtheorem{thm}{Theorem}[section]
\newtheorem{lem}[thm]{Lemma}
\newtheorem{pro}[thm]{Proposition}
\newtheorem{cor}[thm]{Corollary}
\theoremstyle{definition}
\newtheorem{dfn}[thm]{Definition}
\theoremstyle{remark}
\newtheorem{rem}[thm]{Remark}
\numberwithin{equation}{section}
\def\tilde{\widetilde}
\renewcommand{\tilde}{\widetilde}
\def\RR{\bb R}
\def\R{\bb R}
\def\Z{\bb Z}
\def\C{\bb C}
\def\E{\bb E}
\def\EE{\bb E}
\def\N{\bb N}
\def\P{\bb P}
\def\T{\bb T}
\def\hat{\widehat}
\begin{document}
\def\d{\delta}
\def\n{\nolimits}

 \title{Subgaussian sequences   in probability and Fourier analysis}

\author{by\\
Gilles  Pisier\\
Texas A\&M University and UPMC-Paris VI}

\maketitle
\begin{abstract} This is a review on subgaussian sequences
of random variables, prepared for the 
Mediterranean Institute for the Mathematical Sciences (MIMS).
 We first describe the main examples
of such sequences. Then we focus on examples coming
from the harmonic analysis of Fourier series  and we describe the connection
of  subgaussian sequences of characters on the unidimensional torus  
(or any compact Abelian group) with Sidon sets.
We explain the main combinatorial open problem concerning
such subgaussian sequences. 
We present the answer to the  analogous question 
for subgaussian bounded mean oscillation (BMO) sequences on the unit circle. Lastly, we describe
several very recent
  results that provide a generalization of the preceding ones
 when the trigonometric system (or its analogue on a compact Abelian group)
 is replaced by an arbitrary orthonormal system
 bounded in $L_\infty$.
 \end{abstract}
 
 \def\tr{{\rm tr}}

\tableofcontents
\newpage

A  sequence $(f_n)$ of  real valued random variables is called subgaussian  
 if there is $s\ge 0$ such that 
 for any finitely supported $(x_n)\in \R^\N$ 
  \begin{equation}\label{40bb} \E \exp{(\sum x_n f_n)}\le \exp ({s^2 \sum x_n^2/2}).\end{equation}

The equality case corresponds to Gaussian independent variables
with the same variance.
A similar definition (see below) can be given for the $\C$-valued case.
Then the family is subgaussian if and only if (iff in short)
the family that is the union of the real and imaginary
parts of $(f_n)$ is subgaussian in the real sense.

As we will show, this notion plays an important role in Gaussian process theory
and in the harmonic analysis of thin sets, such as Sidon sets.
In fact, as will be shown in \S \ref{sid}, a  subsequence
of the  trigonometric system of the form $f_n(t)=\exp{(ik(n)t)}$
(with $k(n)$ distinct integers in $\Z$)
is subgaussian on $ ([0,2\pi], dt/2\pi)$  iff it is a Sidon sequence,
i.e. one for which  any continuous function $\varphi$ on
the unit circle (identified as usual with  $\R/2\pi\Z$)
with Fourier transform $\hat \varphi$ supported by the set $\{k(n)\}$
has an absolutely convergent Fourier series
$$\varphi(t)=\sum \hat \varphi (k(n)) \exp{(ik(n)t)}.$$
 It turns out that much of
the connection between subgaussian and Sidon sequences
remains valid for general uniformly bounded orthonormal
systems. This came as a  surprise since it was generally believed
that the group structure played a key role. This very recent development
from \cite{BoLe,Pi}
is described in \S \ref{bole}.

 The important feature
of subgaussian sequences is that 
although they share many properties  of bounded independent random variables,
they actually seem much   more general. 
The notion of subgaussian   seems somewhat transversal 
in probability theory : it interacts with many fundamental topics
such as Gaussian processes, martingales, Orlicz spaces, Fourier series or isoperimetric inequalities  (see e.g. \cite{MaPi,Pip,Ka2,LeTa,LQ,Pi4}) 
but it can never be reduced to the intersection with any of these topics.
As we will explain in \S \ref{mop}, there is 
a major open problem that proposes a characterization
of subgaussian sequences in the Fourier series framework.
The true meaning of subgaussian remains puzzling.
The more recent
results on
uniformly bounded orthonormal
systems   described at the end of the paper
give some hope to make progress to clarify that.

\section{Gaussian and subgaussian variables}
In this paper, a real valued Gaussian   random variable $g$
on a probability space $(\Omega, \P)$ is called
Gaussian if there is $\sigma \ge 0$ such that
for any measurable $A\subset \R$
$$\P\{g\in A\}=\int_A e^{-x^2/2\sigma^2}dx/\sqrt{2\pi}\sigma.$$
Note that \emph{we only consider Gaussian variables with mean $0$}.\\
Then  $\sigma^2$ is   the variance of $g$ and $\E g^2=\sigma^2$.
When $\sigma=1$, $g$ is called normal.
We have then
$$\forall z\in \C\quad \E \exp(z g)=\exp z^2/2.$$

A complex valued  random variable $g$ is called
$\C$-Gaussian (resp. $\C$-Gaussian normal) if its real and imaginary parts
are independent Gaussian with the same variance $\sigma$
(resp. with variance $1$).  
We have then when $\sigma=1$
$$\forall z\in \C\quad \E \exp(\Re(z g))=\E \exp(\Re(\bar z g))=\exp |z|^2/2.$$
Warning: with this convention,  a nonzero real valued  Gaussian variable is not $\C$-Gaussian !

We also need a variant: a $\C$-Gaussian  variable will be called
normalized if $\E|g|^2=1$ (note that for a 
normal $\C$-Gaussian variable we have $\E|g|^2=2$).

For convenience, we  will sometimes call $\R$-Gaussian
any real valued Gaussian random variable. We will say that it is normalized
if its $L_2$-norm is $1$. In the real case this is the same as   normal.

Let $(g_n)$ be an i.i.d. sequence of  
 normalized $\R$-Gaussian (resp. $\C$-Gaussian)  variables. Note that this is an orthonormal
system in $L_2(\Omega, \P)$.
Then for any (nonzero) sequence $x=(x_n)\in \ell_2$, the variable
$g=(\sum |x_n|^2)^{-1/2} \sum x_n g_n$ is 
a standard Gaussian variable. Therefore
 \begin{equation}\label{74}\|\sum x_n g_n\|_p= \|g_1\|_p  (\sum |x_n|^2)^{1/2}.\end{equation}
In the real case (with $x_n\in \R$ $\forall n$)  
 \begin{equation}\label{75}\E \exp (\sum x_n g_n)= \exp( \sum x_n^2/2).\end{equation}
In the complex case, assuming  $(g_n)$ $\C$-Gaussian normal (with $x_n\in \C$ $\forall n$) 
 \begin{equation}\label{75b}\E \exp (\Re(\sum x_n g_n))= \exp( \sum |x_n|^2/2).\end{equation}
\begin{dfn} A real valued random variable $f$
is  called subgaussian if 
there is a  constant $s\ge 0$ such that
for any $x\in \R$
 \begin{equation}\label{40a} \E \exp{xf}\le \exp {s^2 x^2/2}.\end{equation}
 
 As is well known this implies
 that for any $c>0$ 
  \begin{equation}\label{102}\P(\{f>c\})\le \exp{-(c^2/2s^2)}\end{equation}
 and also
 \begin{equation}\label{103}\P(\{f<-c\})\le \exp{-(c^2/2s^2)}.\end{equation}
 Indeed, by Markov's inequality we have for any $x>0$
 $\P(\{f>c\})\le  \exp {(s^2 x^2/2-xc)}$
 and the choice of $x=c/s^2$ yields \eqref{102}. Then
 \eqref{103} follows by applying \eqref{102} to $-f$.

  A complex  valued random variable $f$
is  called $\C$-subgaussian if 
there is  constant $s\ge 0$ such that
for any $x\in \C$
 \begin{equation}\label{40aa} \E \exp{\Re(xf)}\le \exp {s^2 |x|^2/2}.\end{equation}
 A  real valued sequence $(f_n)$ is called subgaussian if
 if there is $s\ge 0$ such that 
 for any $(x_n)\in \R^\N$ in the unit sphere of $\ell_2$
 the variable $f=\sum x_n f_n$ satisfies \eqref{40a}. Equivalently,
 for any finitely supported $(x_n)\in \R^\N$ 
  \begin{equation}\label{40b} \E \exp{(\sum x_n f_n)}\le \exp ({s^2 \sum x_n^2/2}).\end{equation}
 A  complex valued sequence $(f_n)$ is called $\C$-subgaussian if
 the real valued sequence formed together  by both its real parts $(\Re f_n)$
 and its imaginary parts $(\Im f_n)$ is subgaussian in the preceding
 sense. This implies that for some $s\ge 0$
  for any finitely supported $(x_n)\in \C^\N$ 
  \begin{equation}\label{40b'} \E \exp{(\Re(\sum x_n f_n))}\le \exp ({s^2 \sum |x_n|^2/2}).\end{equation}
Moreover, we denote by  
$sg(f)$ (resp. $sg(\{f_n\})$)  the   smallest number
$s\ge 0$ for which this holds.
\end{dfn}

The following are immediate consequences of the definition:
\begin{lem}\label{im} If $f$ is $\R$-subgaussian (resp. $\C$-subgaussian)
then so is $tf$ for any $t\in \R$ (resp. $t\in \C$)
and $sg(tf)=|t| sg(f)$. Also
$\E(f)=0$ and in the real case $\E f^2\le sg(f)^2$. 
\\
Let $f_1,f_2$ be two subgaussian variables (either both  real or both complex).
Then
\begin{equation}\label{69} sg(f_1+f_2)\le \sqrt {2} (sg(f_1)^2+sg(f_2)^2  )^{1/2}.\end{equation}
Moreover, if $(f_n)$ is
an independent sequence of $\R$-subgaussian (resp. $\C$-subgaussian)
variables such that $\sum sg(\{f_n\})^2 <\infty$,
then $f=\sum f_n$ is  $\R$-subgaussian (resp. $\C$-subgaussian)
with $sg(f)\le (\sum sg(\{f_n\})^2)^{1/2} $.
\end{lem}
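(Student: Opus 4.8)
The plan is to derive each of the four assertions directly from the defining inequalities \eqref{40a}, \eqref{40aa}, \eqref{40b}, with essentially no computation beyond a change of variable, Hölder's inequality, and one limiting argument. \emph{Scaling} is a change of variable: if $f$ satisfies \eqref{40a} with constant $s$, then for $t,x\in\R$ one has $\E\exp(x(tf))=\E\exp((tx)f)\le\exp(s^2(tx)^2/2)=\exp((|t|s)^2x^2/2)$, so $sg(tf)\le|t|\,sg(f)$; applying this to the pair $(tf,1/t)$ for $t\neq0$ (the case $t=0$ being trivial) yields the reverse inequality, hence equality. The complex statement is identical, using $\Re(x(tf))=\Re((tx)f)$ and $|tx|=|t||x|$ in \eqref{40aa}. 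For the \emph{mean and second moment}, I would observe that \eqref{40a} forces all exponential moments of $f$ to be finite, so $\phi(x)=\E\exp(xf)$ is a smooth function on $\R$ with $\phi'(0)=\E f$ and $\phi''(0)=\E f^2$; since $\phi(x)-\exp(s^2x^2/2)\le0=\phi(0)-1$ for all $x$, the point $x=0$ is a maximum of this difference, forcing $\phi'(0)=0$ and $\phi''(0)\le s^2$. (One can sidestep the smoothness by adding \eqref{40a} at $x$ and at $-x$ and using $e^y+e^{-y}\ge2+y^2$, which gives $2+x^2\E f^2\le2\exp(s^2x^2/2)$; dividing by $x^2$ and letting $x\to0$ gives the bound, and the mean-zero statement comes out the same way.) In the complex case $\E f=0$ follows by applying the real computation to $\Re(xf)$ with $x$ ranging over $\R$ and over $i\R$.

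For \eqref{69} I would apply Hölder's inequality with exponents $2,2$: $\E\exp(x(f_1+f_2))\le(\E\exp(2xf_1))^{1/2}(\E\exp(2xf_2))^{1/2}\le\exp(sg(f_1)^2x^2)\exp(sg(f_2)^2x^2)=\exp\!\big(2(sg(f_1)^2+sg(f_2)^2)\,x^2/2\big)$, so $sg(f_1+f_2)^2\le2(sg(f_1)^2+sg(f_2)^2)$; the complex case replaces $x(f_1+f_2)$ by $\Re(x(f_1+f_2))$ throughout. For the independent sum, set $S_N=\sum_{n\le N}f_n$; independence gives, for real $x$, $\E\exp(xS_N)=\prod_{n\le N}\E\exp(xf_n)\le\exp\!\big(x^2\sum_{n\le N}sg(f_n)^2/2\big)\le\exp\!\big(x^2\sum_n sg(f_n)^2/2\big)$. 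Because $\E f_n=0$ and $\E f_n^2\le sg(f_n)^2$ by the second assertion, the partial sums $S_N$ form an $L_2$-bounded martingale, hence converge almost surely (and in $L_2$) to $f=\sum_n f_n$; Fatou's lemma then gives $\E\exp(xf)\le\liminf_N\E\exp(xS_N)\le\exp\!\big(x^2\sum_n sg(f_n)^2/2\big)$, i.e. $sg(f)\le(\sum_n sg(f_n)^2)^{1/2}$. The complex case is the same with $\Re(zS_N)$, $z\in\C$, in place of $xS_N$, using $\E|f_n|^2\le2\,sg(f_n)^2$ for the $L_2$-boundedness.

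The only step that needs more than one line is this last one, where the passage from the finite partial sums $S_N$ to the infinite sum $f=\sum_n f_n$ must be justified; the $L_2$-boundedness of the martingale $(S_N)$ together with Fatou's lemma handles it cleanly, and every other step is a substitution or an application of Hölder.
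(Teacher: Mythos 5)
Your proof is correct. For the one assertion the paper actually proves, namely \eqref{69}, your H\"older-with-exponents-$(2,2)$ step is exactly the Cauchy--Schwarz argument in the paper; the remaining assertions are explicitly ``left to the reader'' there, and you fill them in cleanly (scaling by change of variable, the mean and second-moment bounds by differentiating $\phi(x)=\E\exp(xf)$ at $0$ or equivalently by the $e^y+e^{-y}\ge 2+y^2$ trick, and the infinite independent sum via $L_2$-bounded-martingale convergence plus Fatou), so the two proofs agree where they overlap.
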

\begin{proof}
\eqref{69} follows from the easy (and soft) observation
 that if in the real valued case
   $s_1=sg(f_1)$ and $s_2=sg(f_2)$, 
 we have     by Cauchy-Schwarz for any $x\in \R$
 $$\int \exp(x (f_1+f_2)) dm\le \left(\int \exp(2x f_1) dm\int \exp(2x f_2) dm\right)^{1/2}
 \le  \left( \exp(2x^2s_1^2 )   \exp(2x^2s_2^2 ) \right)^{1/2} $$
$$=\exp(x^2(s_1^2+s_2^2 )   ).$$
The other assertions are left to the reader.
\end{proof}

Concerning \eqref{69}, we will show later (see Lemma \ref{42}) that $f\mapsto sg(f)$
is equivalent to a norm, namely $f\mapsto\|f\|_{\psi_2}$.

In the real valued case we sometimes use the term $\R$-subgaussian
instead of subgaussian.

Of course,  $\R$-Gaussian (resp. $\C$-Gaussian) implies
$\R$-subgaussian (resp. $\C$-subgaussian), and
for a normal Gaussian variable $g$ we have $sg(g)=1$.

A simple and basic non-Gaussian example is given by a sequence $(\vp_n)$
of independent choices of signs $\vp_n=\pm 1$  taking the values
$\pm 1$
with equal probability  $1/2$. Then one has $sg(\{\vp_n\})= 1$.
This follows simply from  
\begin{equation}\label{chp6aeqH22}
\forall x\in \R\quad
\cosh({x}) \le \exp({x}^2/2),
\end{equation}
which just follows from Stirling's formula:
\[
\cosh({x}) = 1+\sum\nolimits^\infty_1 {x}^{2n}/(2n)! \le 1 + 
\sum\nolimits^\infty_1 {x}^{2n}/(2^nn!).
\]
More generally, by an inequality due to Azuma \cite{Az},
martingale increments satisfy the same:

\begin{thm}\label{chp6athmH5} Let $(f_n)_{n\ge 0}$
be a  real valued martingale  in $L_1$ on some probability space.
Let $d_n=f_n-f_{n-1}$ ($n\ge 1$).
Then if $\|d_n\|_\infty\le 1$ for any $n\ge 1$,
the sequence $(d_n)$ is subgaussian with $sg(\{d_n\})\le 1$.
\end{thm}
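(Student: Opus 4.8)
The plan is to prove the conditional version of the bound \eqref{chp6aeqH22} and then feed it through the martingale structure one increment at a time. Concretely, fix a finitely supported sequence $(x_n) \in \R^\N$ and set $S = \sum x_n d_n$. We want $\E \exp(S) \le \exp(\sum x_n^2/2)$, i.e. $sg(\{d_n\}) \le 1$. Let $(\cl F_n)$ be the filtration (we may take $\cl F_n = \sigma(f_0,\dots,f_n)$, so that $d_n$ is $\cl F_n$-measurable and $\E(d_n \mid \cl F_{n-1}) = 0$ since $(f_n)$ is a martingale). If $N$ is the largest index with $x_N \ne 0$, condition on $\cl F_{N-1}$: by the tower property,
\[
\E \exp(S) = \E\Big[\exp\Big(\sum_{n<N} x_n d_n\Big)\, \E\big(\exp(x_N d_N)\mid \cl F_{N-1}\big)\Big],
\]
which is legitimate because $\sum_{n<N} x_n d_n$ is $\cl F_{N-1}$-measurable.

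The key step is the pointwise estimate $\E\big(\exp(x_N d_N)\mid \cl F_{N-1}\big) \le \exp(x_N^2/2)$ a.s. This is Azuma's lemma: conditionally on $\cl F_{N-1}$ the random variable $d_N$ has conditional mean $0$ and takes values in $[-1,1]$. For any such variable $Y$ and any $x \in \R$, convexity of $t \mapsto e^{xt}$ on $[-1,1]$ gives $e^{xY} \le \frac{1+Y}{2}e^{x} + \frac{1-Y}{2}e^{-x}$ pointwise, so taking conditional expectations and using $\E(Y\mid \cl F_{N-1})=0$ yields $\E(e^{xY}\mid \cl F_{N-1}) \le \cosh x \le \exp(x^2/2)$, the last inequality being exactly \eqref{chp6aeqH22}. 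Applying this with $x = x_N$ and $Y = d_N$ bounds the inner conditional expectation by the constant $\exp(x_N^2/2)$, which then pulls out of the outer expectation:
\[
\E \exp(S) \le \exp(x_N^2/2)\, \E \exp\Big(\sum_{n<N} x_n d_n\Big).
\]

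Now I would iterate: by downward induction on the number of nonzero coefficients (or on $N$), the same argument applied to $\sum_{n<N} x_n d_n$ peels off a factor $\exp(x_{N-1}^2/2)$, and so on down to the empty sum, which has expectation $1$. Multiplying all the factors gives $\E \exp(S) \le \exp(\sum_n x_n^2/2)$, which is precisely \eqref{40b} with $s = 1$; hence $(d_n)$ is subgaussian with $sg(\{d_n\}) \le 1$. (If one wants to be careful about integrability of the partial exponentials along the way, note that each $d_n$ is bounded by $1$, so each partial sum is bounded and all expectations in sight are finite.)

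The only real subtlety — the "main obstacle," though it is a mild one — is the conditional convexity estimate: one must be slightly careful that $e^{xt} \le \frac{1+t}{2}e^x + \frac{1-t}{2}e^{-x}$ holds for \emph{all} $t \in [-1,1]$ (it is the chord of the convex function $t\mapsto e^{xt}$ joining the endpoints $\pm 1$), and that taking conditional expectation is valid because $d_N$ is genuinely bounded and $\cl F_{N-1}$-conditionally the coefficients $\frac{1\pm d_N}{2}$ have the right conditional means. Everything else is the elementary inequality \eqref{chp6aeqH22} already recorded in the text and the tower property of conditional expectation.
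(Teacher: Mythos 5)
Your proof is correct and follows essentially the same route as the paper's: you condition on the past, bound the conditional exponential moment of the top increment via the convexity estimate $e^{xd} \le \frac{1+d}{2}e^x + \frac{1-d}{2}e^{-x}$ followed by $\cosh x \le e^{x^2/2}$ (which is \eqref{chp6aeqH22}), and iterate by the tower property. The only cosmetic difference is that the paper packages the conditional step as a claim about $M_n = \sum_1^n x_k d_k$ and phrases the convexity bound as $\exp(xd)\le\cosh x + d\sinh x$, whereas you peel off the last nonzero coefficient explicitly; the substance is identical.
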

\begin{proof}
We will use the following elementary bound:\ for any $t\in \R$
\begin{equation}\label{chp6aeqH21}
\forall d\in [-1,1]\qquad\qquad \exp({x} d) \le \cosh({x}) + 
d\sinh({x}).
\end{equation}
Indeed, by the convexity of $d\to \exp({x} d)$ on $[-1,1]$, since $d = 
2^{-1}(d+1)(1) + 2^{-1}(1-d)(-1)$ we have 
\[
\exp({x} d) \le 2^{-1}(d+1) \exp({x}) + 2^{-1}(1-d) \exp(-{x}),
\]
which proves this bound.

Let $M_n=\sum\n_1^n x_k d_k$.
Clearly $(M_n)$ is a martingale relative to the filtration
associated to $(f_n)$. We denote by ${\bb E}_{n} $
the conditional expectation with respect to
$\sigma\{M_k\mid k\le n\}$ and we set $M_0=0$.
We now claim that for any $n\ge 1$
\[
{\bb E}_{n-1} \exp(M_n) \le \exp(M_{n-1}) \exp\left(  
x_n^2 /2\right).
\]
Note $M_n-M_{n-1}=t_nd_n$. We have 
by \eqref{chp6aeqH21} and  by \eqref{chp6aeqH22}
\begin{align*}
{\bb E}_{n-1} \exp M_n 
&\le \exp(M_{n-1}) {\bb E}_{n-1}[\cosh(x_n) + d_n \sinh(x_n)]\\
&= \exp(M_{n-1}) \cosh({x}_n)\\
&\le \exp(M_{n-1}) \exp(x^2_n/2)
\end{align*}
which proves the claim. Now

\[
{\bb E} \exp(M_n) = {\bb E}{\bb E}_{n-1} \exp M_n \le {\bb E} \exp (M_{n-1}) 
\exp(x^2_n/2),
\]
and hence by induction
\[
{\bb E} \exp(M_n) \le   \exp\left(\sum\nolimits^n_1 
x_k^2 /2\right).
\]
 \end{proof}
\begin{rem}\label{rr1}
The most basic example of subgaussian sequence
is a sequence $(\vp_n)$ of independent choices of signs,
i.e. an i.i.d. sequence of $\pm 1$-valued variables
with $\P(\{\vp_n=\pm1\})=1/2$. This classical
example is of course included in those given by the preceding statement
since the partial sums $S_n=\sum\n_1^n \vp_k$ form a martingale.
Note that 
\begin{equation}\label{vp} sg(\{\vp_n\})=1\quad {\rm and}\quad sg(\sum\n_1^n \vp_k)\le \sqrt n.\end{equation}
The complex analogue of $(\vp_n)$ is a sequence $(z_n)$
of i.i.d. random variables with values in
the unit circle $\T $ of $\C$ with distribution equal
to the normalized Haar measure on $\T$.
This sequence is $\C$-subgaussian with
$sg(\{z_n\})\le 1$.
Indeed, for any finitely supported $(x_n)\in \C^\N$,
the variables $(d_n)$ defined by
$d_n=\Re(x_n z_n) |x_n|^{-1} $
(with the convention $0/0=0$), being independent 
with mean $0$ form a sequence of martingale differences
with $|d_n|\le 1$. Thus by Theorem \ref{chp6athmH5}
$sg(\{d_n\})\le 1$, which implies 
$sg( |x_n|d_n)\le |x_n|$. 
Now by Lemma \ref{im},  if $\sum |x_n|^2=1$ then
$sg(\sum x_n z_n)\le 1$. Thus
we conclude that $sg(\{z_n\})\le 1$.
\end{rem}
Another important example of subgaussian random variable
can be derived from the fundamental isoperimetric inequality
for Gaussian measure and the related concentration phenomenon:
\begin{thm}\label{mau} Let $F:\ \RR^n \to \R^n$ be a mapping 
(a priori non-linear) satisfying the Lipschitz condition:
 \begin{equation}\label{95}\forall x,y\in \R^n\quad \|F(x)-F(y)\|_2\le \|x-y\|_2.\end{equation}
Let $(g_1,\cdots,g_n)$ be i.i.d. normal $\R$-Gaussian variables.
Then the variables
$$f_j=F_j(g_1,\cdots,g_n)- \EE F_j(g_1,\cdots,g_n)$$
are subgaussian with $sg(\{f_j\})\le 1$.
\end{thm}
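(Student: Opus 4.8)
The plan is to reduce the statement to the one-dimensional concentration inequality for Lipschitz functions of a Gaussian vector, i.e. to show that for each fixed $j$ and each $x\in\R$,
\[
\EE\exp\big(x(F_j(g)-\EE F_j(g))\big)\le\exp(x^2/2),
\]
where $g=(g_1,\dots,g_n)$. Indeed, the coordinate map $g\mapsto F_j(g)$ is $1$-Lipschitz from $(\R^n,\|\cdot\|_2)$ to $\R$ since $|F_j(x)-F_j(y)|\le\|F(x)-F(y)\|_2\le\|x-y\|_2$; and more generally, for any unit vector $(a_j)\in\ell_2$ the function $g\mapsto\sum_j a_j F_j(g)$ is also $1$-Lipschitz (again by Cauchy–Schwarz against $F(x)-F(y)$). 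So it suffices to prove: \emph{if $\Phi:\R^n\to\R$ is $1$-Lipschitz for the Euclidean norm, then $\Phi(g)-\EE\Phi(g)$ is subgaussian with constant $\le1$}, and then apply this with $\Phi=\sum_j a_j F_j$ to get the sequence statement directly from the definition \eqref{40b}.

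For the scalar concentration bound I would use the standard semigroup / interpolation argument. Write $h(g)=\Phi(g)-\EE\Phi(g)$ and fix $x>0$. Let $g,g'$ be independent copies and, for $\theta\in[0,\pi/2]$, set $g_\theta=g\sin\theta+g'\cos\theta$, so that $g_\theta$ is again standard Gaussian and $g_\theta'=g\cos\theta-g'\sin\theta$ is an independent standard Gaussian. Using Jensen's inequality in the form $\EE\exp(x(\Phi(g)-\EE\Phi(g)))\le\EE_{g,g'}\exp(x(\Phi(g)-\Phi(g')))$ and then writing $\Phi(g)-\Phi(g')=\int_0^{\pi/2}\frac{d}{d\theta}\Phi(g_\theta)\,d\theta=\int_0^{\pi/2}\langle\nabla\Phi(g_\theta),g_\theta'\rangle\,d\theta$ (first assuming $\Phi$ smooth, with $\|\nabla\Phi\|_2\le1$ a.e. by the Lipschitz hypothesis), one applies Jensen again in $\theta$ against the probability measure $\frac{2}{\pi}\,d\theta$ and uses independence of $g_\theta$ and $g_\theta'$ to bound
\[
\EE\exp(xh(g))\le\frac{2}{\pi}\int_0^{\pi/2}\EE_{g_\theta,g_\theta'}\exp\Big(\tfrac{\pi}{2}x\langle\nabla\Phi(g_\theta),g_\theta'\rangle\Big)\,d\theta.
\]
Conditioning on $g_\theta$, the inner expectation over the Gaussian vector $g_\theta'$ of $\exp$ of a linear form with coefficient vector $\frac{\pi}{2}x\nabla\Phi(g_\theta)$ equals $\exp\big(\frac{\pi^2}{8}x^2\|\nabla\Phi(g_\theta)\|_2^2\big)\le\exp(\frac{\pi^2}{8}x^2)$. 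This gives the subgaussian estimate with constant $\pi/2$, which already proves the theorem up to the constant; to recover the sharp constant $1$ one instead runs the differential-inequality version: set $\varphi(x)=\log\EE\exp(xh)$ and, after the same symmetrization, show $\varphi'(x)\le\frac{\pi}{2}\cdot\frac{x}{\varphi(x)}\cdot\dots$, or more cleanly use the Ornstein–Uhlenbeck semigroup $P_t$ and the identity $\frac{d}{dt}\EE[e^{xP_th}]$ together with $\|\nabla P_th\|\le e^{-t}P_t\|\nabla h\|\le e^{-t}$ to integrate from $t=0$ to $\infty$ and obtain exactly $\EE e^{xh}\le e^{x^2/2}$. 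Finally, the smoothness assumption on $\Phi$ is removed by mollifying $F$ (convolving with a narrow Gaussian kernel, which preserves the Lipschitz constant) and passing to the limit, using that the subgaussian bound is stable under this approximation.

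The main obstacle is getting the \emph{constant} right: the soft symmetrization argument naturally loses a factor of $\pi/2$, and to match $sg(\{f_j\})\le1$ one must commit to the semigroup computation (Ornstein–Uhlenbeck interpolation, or Herbst's argument via the logarithmic Sobolev inequality for the Gaussian), together with the commutation inequality $\|\nabla P_t h\|_2\le e^{-t}P_t\|\nabla h\|_2$. Everything else — the reduction from $F$ to scalar $1$-Lipschitz functions, the reduction of the sequence definition \eqref{40b} to a single Lipschitz functional via Cauchy–Schwarz, and the mollification step — is routine, and the final assembly just invokes \eqref{40b} with $\Phi=\sum_j x_j F_j$.
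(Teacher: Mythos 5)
Your proposal is correct, and your two-step structure (reduce to a scalar $1$-Lipschitz $\Phi=\sum_j a_j F_j$ via Cauchy--Schwarz, then prove subgaussianity of $\Phi(g)-\E\Phi(g)$) matches the paper's reduction. For the actual concentration bound, you essentially reproduce the paper's second proof (the Gaussian rotation $g_\theta=g\sin\theta+g'\cos\theta$, Jensen twice, then integrate out the conditional Gaussian), which correctly loses a factor $\pi/2$ and only gives the theorem up to a constant. Where you diverge is in obtaining the sharp bound $sg(\{f_j\})\le 1$: you invoke the Ornstein--Uhlenbeck interpolation (or equivalently Herbst's argument via the Gaussian log-Sobolev inequality), using $\frac{d}{dt}\log\E\exp(\lambda P_t h)=-\lambda^2\,\E[\|\nabla P_t h\|_2^2 e^{\lambda P_t h}]/\E[e^{\lambda P_t h}]$ together with the commutation bound $\|\nabla P_t h\|_2\le e^{-t}P_t\|\nabla h\|_2\le e^{-t}$, and integrate over $t\in[0,\infty)$. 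The paper instead follows Maurey and writes $\Phi(B_1)-\E\Phi(B_1)=\int_0^1\nabla(P_{1-t}\Phi)(B_t)\cdot dB_t$ as a Brownian stochastic integral with integrand bounded by $1$, then applies the exponential supermartingale (Azuma-type) bound $\E\exp\big(\lambda\int_0^1 V_t\cdot dB_t-\lambda^2/2\big)\le 1$. The two are close cousins: the Brownian route realizes the semigroup interpolation as a pathwise martingale and reads off the bound from the supermartingale property at one stroke, while your route is a deterministic differential inequality on $t\mapsto\log\E e^{\lambda P_t h}$. Maurey's version is arguably more self-contained given the paper's earlier Azuma lemma (Theorem~\ref{chp6athmH5}); yours is more in the spirit of the Bakry--\'Emery machinery and connects directly to the Mehler kernel material in \S\ref{meh}. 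Both are valid and give the same sharp constant. Your mollification remark for removing smoothness is fine and is implicitly assumed in the paper as well.
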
 We will give two proofs. First following
   \cite[p. 181]{Pip} we review a proof due to Maurey using Brownian stochastic
 integrals and Azuma's inequality \eqref{chp6athmH5}.
 A similar proof already appears in \cite[p. 26]{CIS} (but we
 were not aware of that reference at the time \cite[p. 181]{Pip} was written).
  See also \cite{CIS, SuT},  for closely related results. See also
  the exposition in  \cite[chap. 3]{MI}, for the connection
  with isoperimetric inequalities.
  
  Let us sketch Maurey's argument.
 Fix $x\in \R^n$ with $\|x\|_2=1$.
 It suffices to show that the variable $\Phi=\sum\n_{j=1}^n x_j F_j(g_1,\cdots,g_n)$
 is subgaussian with $sg(\Phi)\le 1$.
 This 
rests on the formula
\begin{equation}\label{96}\Phi(B_1)-\E \Phi (B_1)=
\int_0^1 \nabla(P_{1-t}\Phi)(B_t) . dB_t, \end{equation}
where $(B_t)$ is the standard Brownian motion
 starting at $0$ on $\R^n$, and  $P_t$ is the associated Markov semigroup.
 By Lebesgue's classical differentiation results, we know
 that \eqref{95} implies $\|\nabla(\Phi)\|_2\le 1$ a.s., but
 since $  P_{1-t}F $   still satisfies \eqref{95},
 we also have $\|\nabla(P_{1-t}\Phi)\|_2\le 1$ a.s. and 
 we can  rewrite \eqref{96} as
 \begin{equation}\label{97} \Phi(B_1)-\E \Phi (B_1)=\int_0^1 V_t . dB_t\end{equation}
 with $(V_t)$ such that $\|V_t\|_2\le 1$ a.s. for all $0<t<1$.
 Fix $x\in \R$.
  Now 
  easy arguments from stochastic integration tell us that  the process
 $ M_s=\exp{( x\int_0^s V_t . dB_t-x^2 s/2)} $ ($0\le s\le 1$) is a supermartingale  
and hence
$$\E M_1\le \E M_0=1.$$
This last inequality means that $sg(\Phi)\le 1$, which proves Theorem \ref{mau}.

The second proof (also from \cite{Pip}) is very simple and more elementary 
but it only shows that $sg(\{f_j\})\le (\pi/2)^2$.
 It runs as follows.
Let $g'=(g'_1,\cdots,g'_n)$ be an independent copy of $g=(g_1,\cdots,g_n)$.
Then, let $g(t)=g\sin(t) +g' \cos(t)$. Note $g(\pi/2)=g$
and $g(0)=g'$. Let $g'(t)= \frac{d}{dt} g(t)=g\cos(t)-g'\sin(t)$.
The key observation is that for any $t$ the  pair $(g,g')$ has the 
 same distribution as $(g(t),g'(t))$ (indeed these are Gaussian random vectors in $\R^{2n}$
 with the same covariance). Then the proof boils down to ``the fundamental
 formula of calculus", namely
 $$\Phi (g)-\Phi (g')=\Phi (g(\pi/2))-\Phi (g(0))=\int_0^{\pi/2}  \frac{d}{dt} \Phi ( g(t)  )dt =
 \int_0^{\pi/2}  \nabla \Phi ( g(t)  ). g'(t) dt . $$
 Then by the convexity of the exponential function 
  \begin{equation}\label{m73}\E\exp{(\Phi (g)-\Phi (g'))} \le (2/\pi)  \int_0^{\pi/2}  \left(\E\exp{(\frac{\pi}{2}  \nabla \Phi ( g(t)  ). g'(t) )} \right)    dt,\end{equation}
 but by the distributional invariance of $(g(t),g'(t))$, 
  we have by \eqref{75} 
 $$\forall t\quad \E\exp{(\frac{\pi}{2}  \nabla \Phi ( g(t)  ). g'(t) )}=\E\exp{(\frac{\pi}{2}  \nabla \Phi ( g   ). g'  )}=\E \exp {( (\frac{\pi}{2})^2 \|\nabla \Phi ( g   )\|_2^2/2 )}\le \exp {( (\frac{\pi}{2})^2 /2 )},$$
 and hence by \eqref{m73}
$$\E\exp{(\Phi (g)-\Phi (g'))} \le \exp {( (\frac{\pi}{2})^2 /2 )}.$$
This means that
$sg(\{F_j(g) -F_j(g')\} ) \le (\pi/2)^2$. 
Since, again by convexity of the exponential, we have
 $\E\exp{(\Phi (g)-\E\Phi (g))} \le  \E\exp{(\Phi (g)-\Phi (g'))}   $,
 we obtain a fortiori $sg(\{F_j(g) -\E F_j(g)\} ) \le (\pi/2)^2$.
 
 \section{The Mehler kernel (Ornstein-Uhlenbeck semigroup)}\label{meh}
 
For further use at the end of this paper, we need to describe some basic facts
about the Mehler kernel. 
Let $\{g_n\mid 1\le n\le N\}$ be an i.i.d. sequence of  
 normalized $\R$-Gaussian   variables on 
 $(\Omega, {\cl A}, \P)$, where $\cl A$ is the $\sigma$-algebra
 generated by $\{g_n\mid 1\le n\le N\}$.\\
 Let $(h_n)$ ($n\ge 0$) be the Hermite polynomials on $\R$. 
 Recall $h_0=1$, $h_1(x)=x$. For any $\alpha=(n(1),\cdots,n(N))\in \N^N$, let $h_\alpha(x_1,\cdots,x_N)=h_{n(1)}(x_1)\cdots h_{n(N)}(x_N)$. We call $d=n(1)+\cdots+n(N)$ the degree of $h_\alpha$.
 It is well known that the family of Hermite polynomials
 (suitably normalized) $\{h_\alpha(g_1,\cdots,g_N)\}$
 forms an orthonormal basis of $L_2(\P)$.
 Let $P_0$ be the orthogonal projection onto
 the constant functions, and let $P_1$
 be the orthogonal projection onto {span}$[g_n\mid 1\le n\le N]$.
 More generally, we denote by 
 $P_d$   the orthogonal projection onto the span
 of the Hermite polynomials of degree $d$ in $\{g_n\mid 1\le n\le N\}$.
For any $\d\in [-1,1]$ the  operator  
$T_\d:\ L_2(\P) \to L_2(\P)$ defined
by
$$T_\d=\sum\n_0^\infty \d^d P_d$$
is a positive contraction on $L_p(\P)$
for all $1\le p\le \infty$.

It is well known that for any smooth enough (e.g. polynomial) function
$F(g_1,\cdots,g_N)$ in $L_1(\P)$ we have
$$(T_\d F) (g) = \E_{g'} F(\d g+ (1-\d^2)^{1/2} g')$$
where $g'=(g'_n)$ is an independent copy of  $\{g_n\mid 1\le n\le N\}$.
 This is sometimes called Mehler's formula.
 The operators $t\mapsto T_{e^{-t}}$ form the famous Ornstein-Uhlenbeck semigroup.
 
 It is an easy exercise to show
 that  if $-1<\d<1$ the operator $T_\d $ is given by a positive kernel
 $K_\d\in L_1(\P\times \P)$,
 in the sense that for any
 polynomials $F_1,F_2$ we have
 $$\langle T_\d(F_1) , F_2\rangle=\E_g \E_{g'}  K_\d(g,g')F_1(g')F_2(g) .$$
 
 Note that 
 $$\|K_\d\|_{L_1(\P\times \P)}=\langle T_\d(1) , 1\rangle=1.$$
 The explicit value of $K_\d$ can be easily derived from Mehler's formula.
 Indeed, assuming for simplicity that $\Omega=\R^N$ equipped with
 $\P=\exp{-(\sum x_j^2/2)}  dx_1\cdots dx_N (2\pi)^{-N/2}$ and that $\{g_n\mid 1\le n\le N\}$
 are the coordinates on $\R^N$,
 we have
 $$(T_\d F) (x)= \int F(\d x+ (1-\d^2)^{1/2} y) \P(dy)
 = (2\pi (1-\d^2))^{-N/2}\int F( t)  \exp{-(\frac{|t-\d x|_2^2}{2(1-\d^2)} )}  dt_1\cdots dt_N$$
 from which we derive
 $$K(x,t  )=  (1-\d^2)^{-N/2}\exp{-(\frac{|t-\d x|_2^2}{2(1-\d^2)} )} \exp{(|t|_2^2/2)} $$
 and finally
 $$K(x,t  )= (1-\d^2)^{-N/2}\exp{  \frac{-\d^2 |t|_2^2 +2\d t.x   -\d^2 |x|_2^2}{2(1-\d^2)} }.$$
 
 We will invoke the following simple fact.
 \begin{lem}\label{cm}
 For any $z=(z_n)\in [-1,1]^N$ there is a positive operator
 $\Theta_z: \ L_1(\P) \to L_1(\P) $ of norm $1$ such that
 $$\forall  n=1,\cdots, N\quad \Theta_z(g_n)= z_n g_n.$$
 \end{lem}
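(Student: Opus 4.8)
The plan is to realize $\Theta_z$ as an \emph{anisotropic} Ornstein--Uhlenbeck (Mehler) operator, in which the single contraction parameter $\d$ of \S\ref{meh} is replaced by the vector $z=(z_1,\dots,z_N)$. Keeping the concrete model of \S\ref{meh}, that is $\Omega=\R^N$, $\P=\exp(-\sum x_j^2/2)\,dx\,(2\pi)^{-N/2}$ and $g_n(x)=x_n$, I would define, for bounded measurable $F$ (in particular for polynomials),
\[
(\Theta_z F)(x)=\E_{g'}\,F\!\big(z_1x_1+(1-z_1^2)^{1/2}g'_1,\ \dots,\ z_Nx_N+(1-z_N^2)^{1/2}g'_N\big),
\]
where $g'=(g'_n)$ is an independent copy of $(g_n)$, with the convention that the term $(1-z_n^2)^{1/2}g'_n$ is $0$ when $|z_n|=1$. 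Equivalently, $\Theta_z=T^{(1)}_{z_1}\cdots T^{(N)}_{z_N}$ is the composition of the commuting one-variable operators $T^{(n)}_{z_n}$, where $T^{(n)}_{z_n}$ is the Mehler operator of parameter $z_n$ acting in the $n$-th coordinate only; concretely one tensors the one-dimensional ($N=1$) operators of \S\ref{meh} with identities in the other variables.

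First I would check the norm and positivity. For each fixed $x$ the map $F\mapsto(\Theta_z F)(x)$ is integration of $F$ against a probability measure on $\R^N$, so $\Theta_z$ is positive and $\Theta_z 1=1$. By \S\ref{meh} applied with $N=1$, each one-dimensional factor is a positive contraction on every $L_p$; hence, by Fubini, each $T^{(n)}_{z_n}$ is a positive contraction on $L_1(\P)$, and so is their composition $\Theta_z$. Combined with $\Theta_z 1=1$ (and $\|1\|_{L_1(\P)}=1$), this gives $\|\Theta_z\|_{L_1(\P)\to L_1(\P)}=1$. Finally $\Theta_z$, being an $L_1$-contraction on the dense subspace of bounded functions, extends uniquely by continuity to all of $L_1(\P)$.

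Second, the eigenvalue relation $\Theta_z(g_n)=z_ng_n$. Since $g_n$ depends only on the $n$-th coordinate and $\E_{g'}[\,z_nx_n+(1-z_n^2)^{1/2}g'_n\,]=z_nx_n$, the defining formula gives directly $(\Theta_z g_n)(x)=z_nx_n=z_ng_n(x)$; in the factorized description this is simply that $T^{(n)}_{z_n}$ multiplies the first Hermite polynomial $h_1(x_n)=x_n$ by $z_n$ while fixing constants, and that the remaining factors fix $g_n$.

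I do not foresee a real obstacle here. The closest thing to a difficulty is the degenerate boundary case $|z_n|=1$, which is handled either by the convention above or, more robustly, by using the series definition $T_{z_n}=\sum_d z_n^d P_d$ from \S\ref{meh} (already declared to be a positive $L_p$-contraction for every $z_n\in[-1,1]$); and the passage from bounded functions to $L_1(\P)$, which is routine once $\Theta_z$ is known to be an $L_1$-contraction on a dense subspace.
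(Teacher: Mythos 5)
Your proposal is correct and is exactly the paper's own construction: $\Theta_z=T^{(1)}_{z_1}\otimes\cdots\otimes T^{(1)}_{z_N}$, the tensor product of one-dimensional Mehler contractions. You merely spell out the routine verifications (positivity, norm one, the eigenvalue relation, the boundary cases $|z_n|=1$) that the paper leaves implicit.
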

 \begin{proof}  Let $T^{(1)}_\d$
 be the operator  corresponding  to $T_\d$ in the case $N=1$.
 Then we simply may take
 $$\Theta_z=T^{(1)}_{z_1}
\otimes  \cdots \otimes T^{(1)}_{z_N}.$$ \end{proof}
\section{Orlicz spaces of subgaussian variables}

We now turn to the behaviour of subgaussian variables in $L_p$ for $p<\infty$.
We start by recalling the definition of certain Orlicz spaces.
The latter  are analogues of the $L_p$-spaces
obtained when one replaces the function $x\mapsto x^p$
by a more general convex increasing function $\psi:\ \R+ \to \R+$
such that $\psi(0)=0$.\\
Let $(\Omega,m)$ be a measure space.
The Orlicz space $L_\psi(\Omega,m)$ (or $L_\psi(m)$,  or simply $L_\psi$) is the space of those $f\in L_0  (\Omega,m)$ for which there is $t>0$ such that
 ${\bb E} \psi (|f|/t)< \infty$ and we set
\[
\|f\|_{{\psi} }  = \inf\{t> 0\mid {\bb E}\psi (|f|/t)\le \psi(1)\}.
\]
It is known that the resulting space is a Banach space
and, if $m$ is finite, we have $L_\infty\subset L_\psi\subset L_1$.

We will be interested by the particular case of exponentially growing functions,
so we limit our discussion to that special case.
 Let $0<a<\infty$.
Let $$\forall x>0\quad \psi_a(x)=\exp {x^a}-1.$$ 
Then
\[
\|f\|_{{\psi_a} }  = \inf\{t> 0\mid {\bb E}\exp|f/t|^a\le e\}.
\]
In many cases the growth of
 the $L_p$-norms of a function when $p\to \infty$ is equivalent  to its exponential integrability, as in
the following elementary and well known Lemma.

\begin{lem}\label{41}
Fix a number $a>0$. The following properties of a 
(real or complex) random variable $f$ are 
equivalent:
\begin{itemize}
\item[(i)] $f\in L_p$ for all $p<\infty$ and $\sup\nolimits_{p\ge 1} p^{-1/a}\|f\|_p<\infty$.
\item[(ii)]  $f\in {L_{\psi_a} }$.
\item[(iii)] There is $t>0$ such that 
$\sup\n_{c>0} \exp{(tc^a)} \P\{ |f|>c\} <\infty$. 
\item[(iv)] Let $(f_n)$ be an i.i.d. sequence of copies of $f$. Then
$$\sup\n_n (\log(n+1))^{-1/a} |f_n| <\infty \text{   a.s.  } .$$ 
\end{itemize}
Moreover,  there is a positive constant $C_a$ such that for any $f\ge 0$ we have
 \begin{equation}\label{73}
C_a^{-1} \sup\nolimits_{p\ge 1} p^{-1/a}\|f\|_p \le \|f\|_{{\psi_a} } \le C_a \sup\nolimits_{p\ge 1} 
p^{-1/a} \|f\|_p,
\end{equation}
and we can restrict the sup over $p$ to be over all even integers.
\end{lem}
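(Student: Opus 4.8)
The plan is to prove the quantitative two‑sided estimate \eqref{73} first — which already yields the equivalence $(i)\Leftrightarrow(ii)$ — and then to close the circle through the chain $(ii)\Rightarrow(iii)\Rightarrow(i)$ together with $(iii)\Rightarrow(iv)\Rightarrow(iii)$. Write $M=\sup_{p\ge1}p^{-1/a}\|f\|_p$; everything is stated for $|f|$, so the real and complex cases are treated simultaneously. A preliminary remark used throughout: since the underlying space carries a probability measure, $\|f\|_q\le\|f\|_r$ whenever $q\le r$, so only the growth of $\|f\|_p$ for large $p$ is relevant (finitely many small moments being harmless), and the same monotonicity will let us pass to even integer exponents at the very end.

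For $(ii)\Rightarrow(i)$ I would set $s=\|f\|_{\psi_a}$, so $\EE\exp(|f/s|^a)\le e$, and apply the elementary inequality $u^{r}e^{-u}\le\Gamma(r+1)$ for $u,r\ge0$ with $u=|f/s|^a$, $r=p/a$, to get $\EE|f/s|^p\le e\,\Gamma(p/a+1)$ and hence $\|f\|_p\le s\,(e\,\Gamma(p/a+1))^{1/p}$; Stirling gives $\Gamma(p/a+1)^{1/p}\le C_a\,p^{1/a}$ for $p\ge1$, which is the left half of \eqref{73}. For $(i)\Rightarrow(ii)$ I would expand
\[
\EE\exp(|f/t|^a)=\sum_{k\ge0}\frac{\EE|f|^{ak}}{k!\,t^{ak}},
\]
bound $\EE|f|^{ak}=\|f\|_{ak}^{ak}\le\bigl(M(ak)^{1/a}\bigr)^{ak}=M^{ak}a^{k}k^{k}$ for the $k$ with $ak\ge1$, and use $k!\ge(k/e)^k$ to see the corresponding summand is at most $(M^{a}ae/t^{a})^{k}$; taking $t^a$ a sufficiently large ($a$‑dependent) multiple of $M^a$ makes this geometric series, together with the finitely many exceptional terms, at most $e$, so $\|f\|_{\psi_a}\le C_aM$, the right half of \eqref{73}. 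Since this computation only uses the moments $\|f\|_{ak}$, $k\in\N$, and each such moment may be replaced by the next larger even‑order moment at the cost of enlarging the constant, the supremum in $(i)$ and in \eqref{73} may indeed be taken over even integers only.

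The three remaining one‑way implications are routine. $(ii)\Rightarrow(iii)$: Markov applied to $\exp(|f/s|^a)$, exactly as in the derivation of \eqref{102}, gives $\P\{|f|>c\}\le e\exp(-(c/s)^a)$, i.e. $(iii)$ with $t=s^{-a}$. $(iii)\Rightarrow(i)$: if $\P\{|f|>c\}\le Ke^{-tc^a}$, the layer‑cake identity $\EE|f|^{p}=\int_0^\infty pc^{p-1}\P\{|f|>c\}\,dc$ and the substitution $u=tc^a$ give $\EE|f|^p\le Kt^{-p/a}\Gamma(p/a+1)$, and Stirling again yields $\|f\|_p\le C\,p^{1/a}$. $(iii)\Rightarrow(iv)$: for $\lambda$ large enough that $t\lambda^a>1$, the bound $\P\{|f_n|>\lambda(\log(n+1))^{1/a}\}\le K(n+1)^{-t\lambda^a}$ is summable in $n$, so Borel--Cantelli gives $|f_n|\le\lambda(\log(n+1))^{1/a}$ for all large $n$ almost surely.

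The delicate step — and the main obstacle — is $(iv)\Rightarrow(iii)$, since it must manufacture a deterministic tail estimate from an almost‑sure boundedness statement, and the mechanism is independence. Assuming $(iv)$, I would choose $\lambda_0$ so large that $\P\{|f_n|\le\lambda_0(\log(n+1))^{1/a}\text{ for all }n\}\ge\eta>0$; by independence the probabilities of the events $\{|f_n|\le\lambda_0(\log(n+1))^{1/a}\}$ multiply, so $\prod_{n}\bigl(1-\P\{|f|>\lambda_0(\log(n+1))^{1/a}\}\bigr)\ge\eta>0$, which forces $\sum_{n}\P\{|f|>\lambda_0(\log(n+1))^{1/a}\}<\infty$. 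Putting $\phi(c)=\P\{|f|>c\}$ (nonincreasing) and $c_n=\lambda_0(\log(n+1))^{1/a}$, summability of the nonincreasing sequence $(\phi(c_n))$ forces $n\phi(c_n)\to0$, and since $\phi$ is nonincreasing while the gaps $c_{n+1}-c_n\to0$, this discrete decay upgrades to $\phi(c)\le K'\exp(-t'c^a)$ for all $c>0$ with suitable $K',t'>0$; that is $(iii)$. Having shown $(i)\Leftrightarrow(ii)$ via \eqref{73}, together with $(ii)\Rightarrow(iii)\Rightarrow(i)$ and $(iii)\Rightarrow(iv)\Rightarrow(iii)$, all four statements are equivalent and \eqref{73} is established, which completes the proof.
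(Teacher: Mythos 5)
Your proof is correct, and for the part the paper actually proves in detail, namely $(i)\Leftrightarrow(ii)$ and the two-sided estimate \eqref{73}, you follow essentially the same Taylor-expansion-plus-Stirling argument (the paper extracts a single term of the series $\sum \|f/t\|_{an}^{an}/n!$ to get $(ii)\Rightarrow(i)$, while you use the pointwise bound $u^r e^{-u}\le\Gamma(r+1)$; these are interchangeable). The paper dismisses $(ii)\Leftrightarrow(iii)$, $(ii)\Leftrightarrow(iv)$ and the even-integer reduction as ``elementary exercises'' left to the reader, and you supply those cleanly via Markov, the layer-cake identity, Borel--Cantelli, and a converse Borel--Cantelli argument along $c_n=\lambda_0(\log(n+1))^{1/a}$ for $(iv)\Rightarrow(iii)$; the only cosmetic remark is that the interpolation there really rests on the fact that $\exp(t'c_n^a)=n+1$ grows linearly so that consecutive ratios $(n+2)/(n+1)$ are bounded, rather than on $c_{n+1}-c_n\to 0$ per se.
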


\begin{proof}
Assume that the supremum in (i) is $\le 1$. Then
\begin{align*}
{\bb E} \exp|f/t|^a &= 1 + \sum\nolimits^\infty_1 {\bb E}|f/t|^{an} (n!)^{-1} 
\le 1 + \sum\nolimits^\infty_1 (an)^n t^{-an}(n!)^{-1}\\
\intertext{hence by Stirling's formula for some constant $C$}
{\bb E} \exp|f/t|^a &\le 1 + C \sum\nolimits^\infty_1(an)^n t^{-an} n^{-n}e^n=
 1 + C \sum\nolimits^\infty_1(at^{-a}e)^n
\end{align*}
from which it becomes clear (since $1<e$) that (i) implies (ii). Conversely, if (ii) holds we 
have a fortiori for all $n\ge 1$
\[
(n!)^{-1} \|f/t\|^{an}_{an} \le {\bb E} \exp|f/t|^a \le e
\]
and hence
\[
\|f\|_{an} \le e^{\frac1{an}}  (n!)^{\frac1{an}} t \le e^{\frac1{a}} n^{\frac1a} t = 
 (an)^{\frac1a} t(e/a)^{1/a},
\]
which gives $\|f\|_p \le   p^{1/a}t(e/a)^{1/a}$ for the values $p=an$, 
$n=1,2,\ldots$~. One can then easily interpolate (using H\"older's inequality)  to obtain (i). The equivalences of (ii) with (iii) and (iv) are
 elementary exercises.
The last 
assertion is   a simple recapitulation left to the reader.
\end{proof}
 The following variant explains 
why the variables with $ \|f\|_{ {\psi_2}}<\infty$ are sometimes called subgaussian.

\begin{lem}\label{42} Let $f\in L_1(\Omega,\P)$ be real valued such that
  $\E f=0$. Then
 $f\in L_{\psi_2} $ iff $f$ is subgaussian.\\
Moreover, $\|f\|_{{{\psi_2} }}$, $\sup\n_{p\ge 1} p^{-1/2}\|f\|_p$ and
$sg(f)$ are equivalent quantities for such $f$'s.
\end{lem}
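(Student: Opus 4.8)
The plan is to prove the equivalence of the three quantities, from which the stated equivalence ``$f\in L_{\psi_2}$ iff $f$ subgaussian'' follows immediately: indeed $f\in L_{\psi_2}$ means exactly $\|f\|_{\psi_2}<\infty$, and $f$ is subgaussian means exactly $sg(f)<\infty$. By Lemma~\ref{41} (applied with $a=2$), we already know $\|f\|_{\psi_2}$ and $\sup_{p\ge 1}p^{-1/2}\|f\|_p$ are equivalent (with the sup restrictable to even integers), so it suffices to show that $sg(f)$ is equivalent to $\sup_{p\ge 1}p^{-1/2}\|f\|_p$ for mean-zero $f$.

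First I would prove $\sup_{p\ge 1}p^{-1/2}\|f\|_p \le C\, sg(f)$. The idea is to expand $\E\exp(xf)$ and $\E\exp(-xf)$ in Taylor series and add them; by Lemma~\ref{im} we have $\E f=0$, so for $x>0$
\[
2\sum_{k\ge 0}\frac{x^{2k}}{(2k)!}\,\E f^{2k} \;=\; \E\exp(xf)+\E\exp(-xf)\;\le\; 2\exp(s^2x^2/2),
\]
where $s=sg(f)$ (here I use that $|f|$ is integrable to enough orders, which will come out of the argument, or one truncates and passes to the limit by monotone convergence). Comparing the coefficient of $x^{2k}$ on both sides gives $\E f^{2k}/(2k)! \le (s^2/2)^k/k!$, i.e. $\|f\|_{2k}^{2k}\le s^{2k}(2k)!/(2^k k!)$. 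Stirling then yields $\|f\|_{2k}\le C' s\sqrt{k}\le C'' s\sqrt{2k}$, so $\sup_k (2k)^{-1/2}\|f\|_{2k}\le C'' s$, and interpolating (Hölder) to all $p\ge 1$ gives the claim. This is the part of Lemma~\ref{41}'s proof run in reverse, essentially routine.

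The harder direction is $sg(f)\le C\sup_{p\ge 1}p^{-1/2}\|f\|_p$. Write $B=\sup_{p\ge 1}p^{-1/2}\|f\|_p$, so $\|f\|_{2k}\le B\sqrt{2k}$. For $x\in\R$ one expands
\[
\E\exp(xf)\;=\;1+\sum_{n\ge 1}\frac{x^n}{n!}\,\E f^n\;\le\;1+\sum_{n\ge 1}\frac{|x|^n}{n!}\,\|f\|_n^n
\;\le\;1+\sum_{n\ge 1}\frac{|x|^n}{n!}\,(B\sqrt n)^n,
\]
and using $n!\ge (n/e)^n$ one bounds $(B\sqrt n)^n/n! \le (B\sqrt n\,e/n)^n=(Be/\sqrt n)^n$; this is summable and, after matching it against the series for $\exp(c B^2 x^2)$ (again using Stirling on $(2k)!$), one gets $\E\exp(xf)\le \exp(c B^2 x^2)$ for an absolute constant $c$, hence $sg(f)\le \sqrt{2c}\,B$. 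I expect the \emph{main obstacle} to be the bookkeeping in this last estimate: the odd moments $\E f^n$ need not vanish (only $\E f=0$ is given), so one cannot simply halve the series, and one must pass from a bound on $\E\exp(xf)$ with a quadratic-plus-possibly-linear exponent to a clean $\exp(s^2x^2/2)$ bound. The standard fix is to observe that the mean-zero hypothesis kills the linear term's first-order contribution and that the remaining lower-order terms can be absorbed into the constant $c$ (since $\exp(cB^2x^2)$ dominates $1+O(B^2x^2)$ for all $x$), or alternatively to symmetrize: replace $f$ by $f-f'$ with $f'$ an independent copy (which doubles $B$ up to a constant and forces all odd moments to vanish), derive the subgaussian bound for $f-f'$, and then recover it for $f$ via Jensen, exactly as in the symmetrization step used for Theorem~\ref{mau}. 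Either route closes the loop; I would present the symmetrization argument as it is cleanest. Finally, I would remark that combining the two displayed inequalities with Lemma~\ref{41} gives the full equivalence of $\|f\|_{\psi_2}$, $\sup_{p\ge 1}p^{-1/2}\|f\|_p$, and $sg(f)$.
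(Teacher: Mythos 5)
Your proposal is correct and takes essentially the same approach as the paper: the crucial idea in both is the symmetrization $f\mapsto f-f'$ (to force the odd moments to vanish) followed by the Jensen inequality $\E\exp(xf)\le\E\exp(x(f-f'))$ to recover the bound for $f$ itself, and you correctly identify this as the cleanest route. The only cosmetic differences are your choice of pivot quantity ($\sup_{p\ge 1}p^{-1/2}\|f\|_p$ rather than $\|f\|_{\psi_2}$, which Lemma~\ref{41} makes interchangeable; the paper instead gets $sg(f)\Rightarrow \|f\|_{\psi_2}$ via the tail bound \eqref{102}--\eqref{103}), and your ``comparing the coefficient of $x^{2k}$'' step should more precisely be phrased as: since every term in the left-hand series is nonnegative, each single term $\frac{x^{2k}}{(2k)!}\E f^{2k}$ is individually bounded by $\exp(s^2x^2/2)$, and optimizing over $x$ then yields the stated moment estimate.
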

\begin{proof} Assume that $f\in L_{\psi_2} $ with $\|f\|_{{\psi_2}}\le 1$.
Let $f'$ be an independent copy of $f$. Let $F=f-f'$. Note that since
the distribution of  $F$ is 
  symmetric all its odd moments vanish, and hence
  $$\E \exp{xF} =1+\sum\n_{n\ge 1}  \frac{x^{2n}}{2n !} \E F^{2n}.$$
 We have $\|F\|_{\psi_2}\le  \|f\|_{\psi_2} + \|f'\|_{\psi_2}\le 2$. Therefore
 $ \E (F/2)^{2n} \le n!  \E \exp{(F /2)^2} \le  e n! $. Therefore
 $$\E \exp{xF} \le 1+\sum\n_{n\ge 1}  \frac{(2x)^{2n}}{2n !} e n!\le 1+\sum\n_{n\ge 1}  \frac{(2\sqrt{e}x)^{2n}}{n !} \le \exp {(4ex^2)}. $$
 But since $t\mapsto \exp -xt$ is convex for any $x\in \R$,
 and $\E f'=0$
 we have $1=e^0 \le \E\exp -x f'$ and hence
 $\E\exp xF=\E\exp xf \E\exp - xf'\ge \E\exp xf $.
 Thus we conclude $sg(f)\le (8e)^{1/2}$. By homogeneity this
 shows $sg(f)\le (8e)^{1/2} \|f\|_{\psi_2}$.\\
 Conversely, assume $sg(f)\le 1$. Then 
 by \eqref{102} and \eqref{103}    
 $$\P(\{ |f|>t\})  \le 2 e^{-t^2/2}.$$
 Fix $c>\sqrt 2$. Let $\theta= 1/2-1/c^2$. Note $\theta>0$. We have
 $$\E \exp{(f/c)^2}-1  =\int_0^\infty   (2t/c^2) \exp{(t/c)^2} \P(\{ |f|>t\})   dt
 \le   \int_0^\infty   (4t/c^2)   e^{-\theta t^2}   dt= 2/\theta c^2  .$$
 Elementary calculation shows that if $c=({2}(e+1)(e-1)^{-1})^{1/2}$ we have
 $1+2/\theta c^2=e$. Thus we conclude
 $\|f\|_{\psi_2}\le ({2}(e+1)(e-1)^{-1})^{1/2}$.
By homogeneity, this
 shows\\ $  \|f\|_{\psi_2} \le ({2}(e+1)(e-1)^{-1})^{1/2} sg(f)$.
Lastly the equivalence between $  \|f\|_{\psi_2} $
and 
$\sup\n_{p\ge 1} p^{-1/2}\|f\|_p$
is a particular case of \eqref{73}. 
\end{proof}
The equivalence between (ii) and (iv) of Lemma \ref{41} can be made more precise,
as follows.
\begin{lem}\label{leeq}
The norm $f\mapsto \|f\|_{\psi_a}$
on $L_{\psi_a}$ is equivalent to
$f\mapsto \E\sup\n_{n\ge 1}  (\log(n+1))^{-1/a} |f_n| $.
\end{lem}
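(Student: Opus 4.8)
The plan is to prove the two–sided estimate
$$c_a\,\|f\|_{\psi_a}\ \le\ \E\,S\ \le\ C_a\,\|f\|_{\psi_a},\qquad\text{where}\quad S:=\sup_{n\ge 1}(\log(n+1))^{-1/a}|f_n|,$$
with constants $c_a,C_a>0$ depending only on $a$ and $(f_n)$ an i.i.d.\ sequence of copies of $f$. Since the Orlicz norm is homogeneous and $(f_n/\|f\|_{\psi_a})$ is then an i.i.d.\ sequence of copies of a variable of $\psi_a$–norm $1$, in each direction I may assume $\|f\|_{\psi_a}=1$, i.e.\ $\E\exp|f|^a\le e$. Throughout I write $a_n=(\log(n+1))^{1/a}$, so $S=\sup_n|f_n|/a_n$, and I note that $S$ is measurable as a countable supremum.

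For the upper bound I start from the tail estimate $\P(|f|>t)=\P(\exp|f|^a>\exp t^a)\le e\,\exp(-t^a)$, which is Markov's inequality applied to $\exp|f|^a$. A union bound over the copies gives, for every $u>0$,
$$\P(S>u)\ \le\ \sum_{n\ge 1}\P\big(|f|>u\,a_n\big)\ \le\ e\sum_{n\ge 1}(n+1)^{-u^{a}}\ =:\ \phi(u),$$
where $\phi(u)<\infty$ precisely for $u>1$, and $\phi$ is continuous and strictly decreasing on $(1,\infty)$ with $\phi(1^{+})=+\infty$, $\phi(\infty)=0$. Hence $\E S=\int_0^\infty\P(S>u)\,du\le\int_0^\infty\min(1,\phi(u))\,du$, and splitting at the unique point $u_a$ with $\phi(u_a)=1$ one gets $\E S\le u_a+\int_{u_a}^\infty\phi(u)\,du$; using the crude estimate $\sum_{k\ge 2}k^{-s}\le C'_a\,2^{-s}$ (valid for $s\ge u_a^{a}>1$), the tail integral is at most $eC'_a\int_0^\infty 2^{-u^{a}}\,du<\infty$. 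This yields $\E S\le C_a$, i.e.\ $\E S\le C_a\|f\|_{\psi_a}$ in general.

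For the lower bound I run the implication in reverse: from $\lambda:=\E S<\infty$ I recover a $\psi_a$–type tail for $|f|$. Markov gives $\P(S>2\lambda)\le \tfrac12$. Fix $t>0$, put $p=\P(|f|>t)$ and $N_0=\lfloor\exp\big((t/2\lambda)^{a}\big)-1\rfloor$, chosen so that $2\lambda\,a_n\le t$ for all $n\le N_0$; then $\{S>2\lambda\}\supseteq\bigcup_{n\le N_0}\{|f_n|>t\}$, and by independence of the copies
$$\tfrac12\ \ge\ \P(S>2\lambda)\ \ge\ 1-(1-p)^{N_0},$$
whence $N_0\,p\le N_0\log\tfrac1{1-p}\le\log 2$. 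For $t\ge 2\lambda\,(2\log 2)^{1/a}$ one has $\exp\big((t/2\lambda)^{a}\big)\ge 4$ and hence $N_0\ge\tfrac12\exp\big((t/2\lambda)^{a}\big)$, so $\P(|f|>t)\le 2\log2\cdot\exp\big(-(t/2\lambda)^{a}\big)$; for smaller $t$ the same bound holds trivially after enlarging the constant. Feeding this tail into $\E\exp(|f|/s)^a-1=\int_0^\infty \frac{a t^{a-1}}{s^a}\exp(t/s)^a\,\P(|f|>t)\,dt$ and taking $s=C_a\lambda$ with $C_a$ large (depending only on $a$) makes the exponent $(t/s)^a-(t/2\lambda)^a$ strictly negative, so both the truncated and the tail part of the integral are $O_a(C_a^{-a})$; choosing $C_a$ so that the total is $\le e-1$ gives $\E\exp(|f|/C_a\lambda)^a\le e$, that is $\|f\|_{\psi_a}\le C_a\lambda=C_a\,\E S$. (In particular $\E S<\infty$ forces $f\in L_{\psi_a}$, so the two quantities are simultaneously finite.)

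The routine ingredients are the passages between tails and moments (Markov, integration in the distribution function) and the elementary bounds on $\sum_k k^{-s}$. The one genuinely delicate step is the lower bound, where the point is to convert the \emph{smallness} of the expected maximum of the i.i.d.\ copies into a quantitative tail bound on $f$ itself: this is exactly where independence is used, through $\P(S>2\lambda)\ge 1-(1-p)^{N_0}$, and it requires the bookkeeping that matches $N_0$ with $\exp\big((t/2\lambda)^{a}\big)$ so that the recovered tail has the correct $\psi_a$–shape.
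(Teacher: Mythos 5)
Your proof is correct. The upper-bound half is essentially identical to the paper's: Markov applied to $\exp|f|^a$, a union bound giving $\P(S>u)\le e\sum_n(n+1)^{-u^a}$, and integration of the distribution function. For the lower bound both arguments begin the same way (Markov gives $\P(S\le 2\E S)\ge 1/2$, and independence is then the crucial input), but they diverge afterwards. The paper rewrites $\P(F\le 2)=\prod_n\P(|f|\le 2(\log(n+1))^{1/a})$, applies $1-x\le e^{-x}$ to pass to $\sum_n\P(|f|>2(\log(n+1))^{1/a})\le\log 2$, and then observes that this sum is \emph{literally} $\sum_n\P(\psi_a(|f|/2)>n)$, so the classical bound $\E Z\le 1+\sum_n\P(Z>n)$ immediately gives $\E\psi_a(|f|/2)\le 1+\log 2\le e$ and hence $\|f\|_{\psi_a}\le 2$. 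You instead extract an explicit $\psi_a$-shaped tail estimate $\P(|f|>t)\lesssim\exp(-(t/2\lambda)^a)$ by bounding $N_0\,p\le\log 2$, and then re-integrate this tail against the density of $\exp(|f|/s)^a$ with $s=C_a\lambda$. Both are valid; the paper's route is shorter because it avoids the intermediate "derive a tail, then integrate the tail" round-trip by recognizing the summation as the Orlicz expectation directly, while your route is a bit more bookkeeping but has the virtue of producing the quantitative tail decay of $f$ as an explicit by-product. The one place to be careful in your write-up is the step "for smaller $t$ the same bound holds trivially after enlarging the constant": as stated the constant $2\log 2<1$ does not dominate $p\le 1$ when $t<2\lambda(2\log 2)^{1/a}$, so you do genuinely need to enlarge it (e.g. to $4$, since $4\exp(-2\log 2)=1$); this is easy but worth stating since the subsequent integration uses a fixed constant.
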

\begin{proof} Assume $\| f\|_{\psi_a}\le 1$.
Then $\E \exp{|f|^a} \le e$. Let $F=\sup\n_{n\ge 1} (\log(n+1))^{-1/a} |f_n| $.
Then $\forall c>0$
$$\P(\{F>c\})\le \sum\n_1^\infty \P(\{|f|>c(\log(n+1))^{-1/a}\})
\le  \sum\n_1^\infty e \exp{(-c^a\log(n+1)}=e \sum\n_1^\infty (n+1)^{-c^a}.$$
If $c^a>4$ we have a fortiori
$$\P(\{F>c\})\le e \sum\n_1^\infty (n+1)^{-2} 2^{-c^a/2}\le K 2^{-c^a/2},$$
where $K=e \sum\n_1^\infty (n+1)^{-2}$. From this we derive immediately
$$\E F=\int_0^\infty  \P(\{F>c\})dc \le K'$$
where $K'=4^{1/a} +\int_{4^{1/a}} K 2^{-c^a/2}dc$.
By homogeneity, this yields $\E F\le K' \|f\|_{{\psi_a}}$ for any $f\in L_{\psi_a}$.\\
We now turn to the converse. Assume $\E F\le 1$.
Then $\P(\{F\le 2\})
\ge 1/2$, and hence
$$\prod\n_{n\ge 1} \P(\{|f|\le 2 (\log(n+1))^{1/a}\})  \ge 1/2.$$
But $\P(\{|f|\le 2 (\log(n+1))^{1/a}\})=1-\P(\{|f|> 2 (\log(n+1))^{1/a}\})\le \exp^{-\P(\{|f|> 2 (\log(n+1))^{-1/a}\})}$ and hence
$$\sum\n_{n\ge 1} \P(\{|f|> 2 (\log(n+1))^{-1/a}\}) \le \log 2$$
or equivalently
$$\sum\n_{n\ge 1} \P(\{\psi_a(|f|/2) >n   \}) \le \log 2.$$
But it is classical that for any variable $Z\in L_1$ we have
$\E Z\le 1+\sum\n_{n\ge 1} \P(\{Z>n\})$, so we conclude
$$\E \psi_a(|f|/2) \le 1+\log 2 \le e,$$
and hence $\|f\|_{{\psi_a}}\le 2$. 
By homogeneity, 
$\|f\|_{{\psi_a}}\le 2\E F$ for any $f\in L_{\psi_a}$.
\end{proof}
\begin{rem}[On $L_{\psi_a}$ and the Fourier transform]\label{p43} Let $G$ be  a compact Abelian group. Let $f_1,f_2, g_1,g_2\in L_4(G)$.
 It is well known that if $|\hat f_j |\le \hat g_j$ on $\hat G$ ($j=1,2$)
 then
 $$\|f_1 f_2\|_2 \le \|g_1  g_2\|_2.$$
 Indeed, this follows from
 $\|f_1 f_2\|_2=\|\hat f_1\ast \hat f_2\|_2$, $|\hat f_1\ast \hat f_2|\le \hat g_1\ast\hat g_2$  and again $\|g_1  g_2\|_2=\|\hat g_1\ast\hat g_2\|_2$.\\
 Iterating this idea, we find that if $f_1,\cdots,f_m\in L_{2m}(G)$
 are such that $|\hat f_j |\le \hat g_j$ on $\hat G$ ($j=1,\cdots, m$) we have
 $$\|f_1  \cdots    f_m\|_2 \le \|g_1  \cdots   g_m\|_2.$$
 In particular, taking $f_1=\cdots  =f_m=f$ and $g_1=\cdots  =g_m=g$
 we find that if $f,g\in L_{2m}(G)$ are such that $|\hat f|\le \hat g$ on $\hat G$,
 then
 $$\|f\|_{2m} \le \|g\|_{2m}.$$
 This implies that for any $a>0$ we have
 $$ \sup\nolimits_{p\in 2 \N} p^{-1/a}\|f\|_p\le \sup\nolimits_{p\in 2 \N} p^{-1/a}\|g\|_p .$$
By \eqref{73}, we have
$$\|f\|_{\psi_a}  \le C_a\|g\|_{\psi_a} ,$$
where $C_a$ is a constant depending only on $a$.
\end{rem}

\section{Slepian's and Talagrand's Comparison Theorems}

A collection of random variable $\{X_s\mid s\in S\}$
 on a probability space $(\Omega,\P)$ is called
 Gaussian (resp. subgaussian)
if all the variables in its linear span
are Gaussian (resp. subgaussian). In this definition,
we include in parallel
the real and complex case, that we will distinguish
if necessary by $\R$-Gaussian or $\C$-Gaussian
(resp. $\R$-subgaussian or $\C$-subgaussian).

\noindent{\bf Convention:} To avoid any discussion concerning separability of random processes, 
for any real valued random process $\{X_s\mid s\in S\}$ in $L_1(\Omega,\P)$
by convention,  we define the number $\E \sup\n_{s\in S} X_s$
(possibly $=\infty$)  by
setting
$$\E \sup\n_{s\in S} X_s =\sup\n_{S'\subset S} \E\sup\n_{s\in S'} X_s,$$
where the sup runs over all {\it finite} subsets ${S'\subset S}$.

The following comparison theorem originally due to Slepian
is of paramount importance in the theory of Gaussian processes. It
was later on refined by various authors. The version we state
was popularized by Fernique (see \cite{Fe2}).
\begin{thm}[Slepian's comparison principle] Let $\{X_s\mid s\in S\}$ and $\{Y_s\mid s\in S\}$
be two $\R$-Gaussian processes such that
$$\forall s,t\in S\quad \|Y_s-Y_t\|_2 \le \|X_s-X_t\|_2.$$
Then
$$\E\sup\n_{s\in S} Y_s \le \E\sup\n_{s\in S} X_s.$$
Moreover if we also have $\E|Y_s|^2=\E|X_s|^2$ for all $s\in S$
then for any finite $S'\subset S$
$$\forall c\in \R \quad
\P(\{   \sup\n_{s\in S'} Y_s>c   \})   \le 
\P(\{   \sup\n_{s\in S'} X_s>c   \}).$$
\end{thm}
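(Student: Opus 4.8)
The plan is to prove both assertions by the classical Gaussian interpolation (``smart path'') method. First I would reduce to a finite index set $S=\{1,\dots,n\}$: by the convention on $\E\sup$ it suffices to treat finite $S$, and the tail statement is already phrased for finite $S'\subset S$. Realizing $X=(X_1,\dots,X_n)$ and $Y=(Y_1,\dots,Y_n)$ on a common space as \emph{independent} centered Gaussian vectors (harmless, since only covariances and the laws of the suprema enter), set for $\theta\in[0,1]$
$$Z(\theta)=\sqrt\theta\,X+\sqrt{1-\theta}\,Y,$$
a centered Gaussian vector with covariance $\theta A+(1-\theta)B$, where $A=(a_{ij})=(\E X_iX_j)$ and $B=(b_{ij})=(\E Y_iY_j)$ (this matrix is positive semidefinite, being a convex combination of such). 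The engine is the Gaussian integration-by-parts identity $\E[Z_iG(Z)]=\sum_j\mathrm{cov}(Z_i,Z_j)\,\E[\partial_jG(Z)]$, applied with $G=\partial_iF$, which yields, for smooth $F:\R^n\to\R$ with polynomially bounded derivatives,
$$\frac{d}{d\theta}\,\E F(Z(\theta))=\frac12\sum_{i,j}(a_{ij}-b_{ij})\,\E\!\left[\partial_i\partial_jF(Z(\theta))\right].$$
Writing $c_{ij}=a_{ij}-b_{ij}$, the hypothesis $\|Y_s-Y_t\|_2\le\|X_s-X_t\|_2$ reads $c_{ii}+c_{jj}-2c_{ij}\ge0$ for all $i,j$, and in the equal‑variance case one has moreover $c_{ii}=0$, hence $c_{ij}\le0$ for all $i,j$.

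For the first assertion I would take $F=F_\beta$ the soft‑maximum $F_\beta(x)=\beta^{-1}\log\sum_ie^{\beta x_i}$, $\beta>0$. A direct computation gives $\partial_iF_\beta=p_i$ and $\partial_i\partial_jF_\beta=\beta(p_i\delta_{ij}-p_ip_j)$ with $p_i=e^{\beta x_i}/\sum_ke^{\beta x_k}$, so the Hessian of $F_\beta$ has nonpositive off‑diagonal entries and zero row sums. Inserting this in the interpolation formula, using the zero‑row‑sum property to eliminate the diagonal terms, and then symmetrizing in $i,j$, one rewrites
$$\frac{d}{d\theta}\,\E F_\beta(Z(\theta))=-\frac14\sum_{i\ne j}\bigl(c_{ii}+c_{jj}-2c_{ij}\bigr)\,\E\!\left[\partial_i\partial_jF_\beta(Z(\theta))\right]\ \ge\ 0,$$
since each factor $c_{ii}+c_{jj}-2c_{ij}$ is $\ge0$ and $\partial_i\partial_jF_\beta\le0$ for $i\ne j$. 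Hence $\E F_\beta(X)=\E F_\beta(Z(1))\ge\E F_\beta(Z(0))=\E F_\beta(Y)$; letting $\beta\to\infty$ and using $0\le F_\beta(x)-\max_ix_i\le\beta^{-1}\log n$ together with the integrability of $\max_i|X_i|$ and $\max_i|Y_i|$ (dominated convergence), we obtain $\E\sup_sX_s\ge\E\sup_sY_s$.

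For the tail statement (equal variances) I would instead take, for $c\in\R$ and $\eta>0$, $F(x)=\prod_{i}h(x_i)$ with $h:\R\to[0,1]$ smooth and nonincreasing, $h\equiv1$ on $(-\infty,c]$ and $h\equiv0$ on $[c+\eta,\infty)$. Then for $i\ne j$ one has $\partial_i\partial_jF=h'(x_i)h'(x_j)\prod_{k\ne i,j}h(x_k)\ge0$, so the interpolation formula together with $c_{ii}=0$ and $c_{ij}\le0$ gives $\frac{d}{d\theta}\,\E F(Z(\theta))\le0$, i.e. $\E F(X)\le\E F(Y)$. Since $F\ge\mathbf 1_{\{\max_ix_i\le c\}}$ while $F\le\mathbf 1_{\{\max_ix_i\le c+\eta\}}$, this gives
$$\P\bigl(\sup_{s\in S'}X_s\le c\bigr)\le\E F(X)\le\E F(Y)\le\P\bigl(\sup_{s\in S'}Y_s\le c+\eta\bigr);$$
letting $\eta\downarrow0$ (right‑continuity of the distribution function) and passing to complements yields $\P(\sup_{S'}Y_s>c)\le\P(\sup_{S'}X_s>c)$.

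The main obstacle is the soft but genuinely technical part: proving the Gaussian integration‑by‑parts / interpolation identity with an honest justification of differentiation under the integral sign, and carrying out the two limit passages ($\beta\to\infty$ and $\eta\downarrow0$). For the specific choices of $F$ above the first derivatives are bounded and the second derivatives are bounded (with decay for the product function), so dominated convergence applies cleanly; one must also check that $\theta A+(1-\theta)B$ remains positive semidefinite so that $Z(\theta)$ genuinely exists, which it does. Once these analytic points are in place, the purely algebraic heart of the argument — the sign computations displayed above — is immediate, and uses only the increment hypothesis (and, for the second part, the equality of variances).
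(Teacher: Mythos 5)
The paper states Slepian's theorem without proof, citing Fernique \cite{Fe2} as a reference for the version given, so there is no internal proof to compare with. Your argument is therefore a self-contained supplement rather than an alternative to the paper's proof.

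Your proof is correct and is the standard modern ``smart path'' argument. The key algebraic points all check out: the Gaussian interpolation identity
$\frac{d}{d\theta}\,\E F(Z(\theta))=\frac12\sum_{i,j}(a_{ij}-b_{ij})\E[\partial_i\partial_j F(Z(\theta))]$
is correctly derived from Gaussian integration by parts applied to the independent copies; for the soft-max $F_\beta$ the Hessian has zero row sums and nonpositive off-diagonal entries, so the row-sum trick and symmetrization in $i,j$ legitimately convert the formula into the increment form
$-\tfrac14\sum_{i\ne j}(c_{ii}+c_{jj}-2c_{ij})\E[\partial_i\partial_j F_\beta]$,
which is nonnegative under the hypothesis; and the $\beta\to\infty$ limit is handled correctly since $\max_i x_i\le F_\beta(x)\le\max_i x_i+\beta^{-1}\log n$ gives a uniform integrable dominator. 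For the tail inequality, the product function $F(x)=\prod_i h(x_i)$ has $\partial_i\partial_j F\ge0$ for $i\ne j$, and under equal variances $c_{ii}=0$ forces $c_{ij}\le0$, so the sign of the derivative reverses; the sandwich $\mathbf 1_{\{\max\le c\}}\le F\le\mathbf 1_{\{\max\le c+\eta\}}$ and right-continuity of the distribution function complete the argument. The reduction to finite $S$ via the paper's convention on $\E\sup$ is also properly invoked. The only point worth flagging, which you already acknowledge, is that the justification of differentiation under the integral sign and of the independence of the realizations of $X$ and $Y$ must be spelled out for a fully rigorous write-up; for the two concrete test functions you choose (bounded second derivatives) this is routine.
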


We should emphasize that this is a quite non trivial phenomenon, special to Gaussian processes. Indeed,
in general a comparison of the covariances
is far from implying a comparison of the suprema of the processes.

It is natural to wonder whether a similar comparison theorem
holds when $Y$ is merely subgaussian.
This turns out to be true, but highly non trivial:

\begin{thm}[Talagrand's comparison principle]\label{tal}
Let $\{X_s\mid s\in S\}$ 
be   $\R$-Gaussian process
and  $\{Y_s\mid s\in S\}$
  $\R$-subgaussian.
Assume
$$\forall s,t\in S\quad sg(Y_s-Y_t)   \le \|X_s-X_t\|_2,$$
or equivalently
$$\forall x\in \R\ \forall s,t\in S\quad \E \exp {x(Y_s-Y_t)} \le \exp{ (x^2\E|X_s-X_t|^2/2)}.$$
Then
$$\E\sup\n_{s\in S} Y_s \le \tau \E\sup\n_{s\in S} X_s,$$
where $\tau$ is a numerical constant.
\end{thm}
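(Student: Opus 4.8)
The plan is to deduce Talagrand's comparison principle from the known machinery of \emph{majorizing measures} (equivalently Talagrand's generic chaining), which provides a two-sided characterization of $\E\sup_{s\in S} X_s$ for a Gaussian process purely in terms of the metric $d_X(s,t)=\|X_s-X_t\|_2$, together with an upper bound for $\E\sup_{s\in S} Y_s$ in terms of the \emph{same} metric (now playing the role of the subgaussian metric of $Y$). First I would recall the majorizing measure theorem: there is a numerical constant $\kappa$ such that for any Gaussian process,
$$\kappa^{-1}\,\gamma_2(S,d_X)\le \E\sup\n_{s\in S} X_s\le \kappa\,\gamma_2(S,d_X),$$
where $\gamma_2$ is the Fernique--Talagrand functional built from admissible sequences of partitions of $(S,d_X)$ (or, in the classical formulation, from the infimum over probability measures $\mu$ on $S$ of $\sup_s\int_0^\infty (\log(1/\mu(B(s,r))))^{1/2}\,dr$). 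This is the ``highly non trivial'' input and I would cite it rather than reprove it.

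Next I would establish the \emph{upper} bound $\E\sup_{s\in S} Y_s\le C\,\gamma_2(S,d_X)$ for the subgaussian process $Y$, using only the hypothesis $sg(Y_s-Y_t)\le d_X(s,t)=\|X_s-X_t\|_2$. This is the easy direction of the chaining argument and is completely parallel to the classical proof of Dudley's bound and its refinement: fix a point $s_0$, take an admissible sequence $(\mathcal A_n)$ of partitions of $(S,d_X)$ nearly attaining $\gamma_2$, pick in each cell $A\in\mathcal A_n$ a representative point $\pi_n(s)$, write the telescoping chain $Y_s-Y_{s_0}=\sum_{n\ge 1}(Y_{\pi_n(s)}-Y_{\pi_{n-1}(s)})$, and bound each increment using the subgaussian tail estimates \eqref{102}--\eqref{103}: since $sg(Y_{\pi_n(s)}-Y_{\pi_{n-1}(s)})\le d_X(\pi_n(s),\pi_{n-1}(s))\le 2^{-(n-1)}\,\mathrm{diam}(A_{n-1}(s))$-type quantities, a union bound over the at most $2^{2^n}$ cells at level $n$ converts the $\sqrt{2^n}$ entropy factor into exactly the weights defining $\gamma_2$. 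Taking expectations of the supremum over finite subsets (per the paper's convention) yields $\E\sup_{s\in S} Y_s\le C\,\gamma_2(S,d_X)$ with a numerical constant $C$. I should be slightly careful that $Y$ need not be centered termwise in a convenient way, but $\E Y_s=0$ by Lemma \ref{im} applied to $Y_s-Y_{s_0}$ combined with translating by a fixed $s_0$, so the chaining is legitimate.

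Chaining the two bounds gives
$$\E\sup\n_{s\in S} Y_s\le C\,\gamma_2(S,d_X)\le C\kappa\,\E\sup\n_{s\in S} X_s,$$
so one may take $\tau=C\kappa$, a numerical constant, which is exactly the assertion. The main obstacle is that this route is not self-contained: it rests on the full strength of the majorizing measure / generic chaining theorem (the \emph{lower} bound $\gamma_2(S,d_X)\lesssim \E\sup X_s$), which is the genuinely deep ingredient and the reason the theorem is ``highly non trivial''; the subgaussian upper bound, by contrast, is the standard soft chaining computation and presents no real difficulty beyond bookkeeping with the admissible partitions. An alternative, more in the spirit of Slepian--Fernique, would be to try a direct comparison of Laplace transforms or an interpolation/smoothing argument, but no such elementary proof is known, so I would commit to the chaining approach and simply invoke the majorizing measure theorem as a black box.
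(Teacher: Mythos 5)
Your proposal is correct and takes essentially the same route as the paper: both invoke Talagrand's majorizing measure theorem (equivalently the generic chaining lower bound $\gamma_2(S,d_X)\lesssim \E\sup_s X_s$) as the deep black-box input, and then dispatch the subgaussian process by the standard chaining upper bound using only $sg(Y_s-Y_t)\le d_X(s,t)$ and the tail estimates \eqref{102}--\eqref{103}. The paper phrases the second step via the majorizing-measure functional ${\cl I}(\mu,X)$ rather than admissible partitions and $\gamma_2$, but as the paper itself notes these are equivalent, so the argument is the same.
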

The genesis of this result started when Fernique (see \cite{Fe2})
proved his characterization of stationary Gaussian processes
with a.s. bounded sample paths.
His result implied that if $S$ is a group and if the distribution
of  $\{X_s\mid s\in S\}$ is invariant under translation (stationarity),
then the comparison in Theorem \ref{tal} holds
for any $\R$-subgaussian $\{Y_s\mid s\in S\}$. Later on,
Talagrand proved a similar characterization
(the so-called majorizing measure condition)
of  Gaussian processes
with a.s. bounded sample paths, without assuming any stationarity.
To explain this, let us go back to the stationary case.
Roughly, when $S$ is a compact group and
$\{X_s\mid s\in S\}$ is stationary the normalized Haar measure
on $S$ provides a way to estimate $\E\sup\n_{s\in S} X_s$.
More precisely, $\E\sup\n_{s\in S} X_s$
is equivalent to the metric entropy integral 
$$  \int\n_0^\infty (\log N_X(\vp))^{1/2} d\vp, $$
where $N_X(\vp)$ is the smallest number of a covering
of $S$ by open balls of radius $\vp$ for the metric
$d_X(s,t)=(\EE|X_t-X_s|^2)^{1/2}$. (Note that $\log N_X(\vp)=0$
when $\vp$ is larger than the diameter, and the latter is necessarily finite).
In the stationary case,
when both the Haar measure and $d_X$ are translation invariant,
$N_X(\vp)$ is equivalent to ${m_G(\{s\mid d_X(s,1)<\vp\})}^{-1}$ and hence
the latter integral
is equivalent to
$${\cl I}_2(X)=\int_0^\infty  (\log \frac{1}{m_G(\{s\mid d_X(s,1)<\vp\})})^{1/2} d\vp.$$
When ${\cl I}_2(X)<\infty$ it is known (this is a subgaussian variant of Dudley's majorization of
  Gaussian processes) that all the $\R$-subgaussian processes 
$\{Y_s\mid s\in S\}$
 such that $sg(Y_s-Y_t)\le d_X(s,t)$ satisfy
 $$\E\sup\n_{s\in S} Y_s \le \tau' {\cl I}_2(X)$$
 for some numerical constant $\tau'$.
 Together with the equivalence $\E\sup\n_{s\in S} X_s\simeq {\cl I}_2(X)$
 this leads to Theorem \ref{tal} assuming $X$ \emph{stationary} $\R$-Gaussian.
 
 For general a.s. bounded Gaussian processes $(X_t)$,
 Fernique conjectured  the existence of a ``majorizing measure"
 that would replace Haar measure.
 Namely there should exist
  a probability $\mu$ on $S$
 such that
 \begin{equation}\label{201}
 {\cl I}(\mu,X) =\sup\n_{t\in S} \int_0^\infty (\log \frac{1}{\mu(\{s\mid   d_X(s,t)<\vp\})})^{1/2} d\vp<\infty .\end{equation}
 More precisely, 
 for some constant $c>0$,
  we should have  for any bounded Gaussian processes $(X_t)$
  \begin{equation}\label{z201} \inf_{\mu} {\cl I}(\mu,X) \le  c \E\sup\n_{s\in S} X_s\end{equation}
  where the infimum
  on the left-hand side runs over all probabilities $\mu$ on $S$.
  In the latter form, the question can be reduced
  to the case when $S$ is a finite set (with-of course-$c$ independent of $S$).
  In his   paper \cite{Ta} (see also \cite[\S 2.4]{Ta2})
 Talagrand proved this conjecture. This was a major achievement. 
He showed
that if $\E\sup\n_{s\in S} X_s\le 1$ there is a probability measure $\mu$ 
(the so-called majorizing measure) 
satisfying \eqref{201}. Here again \eqref{201}
also allows one to majorize all the $\R$-subgaussian processes 
$\{Y_s\mid s\in S\}$
 such that $sg(Y_s-Y_t)\le d_X(s,t)$, namely  
 we have a numerical constant
 such that $\E\sup\n_{s\in S} Y_s \le \tau'' {\cl I}(\mu,X)$. Thus he
 obtains Theorem \ref{tal} as a corollary of 
 his main result, just like in the stationary case.
 Note that, even though it does not involve
 majorizing measures,  as far as we  know the only   known proof
  of Theorem \ref{tal}  uses \eqref{z201} in some form or other.
  In later work Talagrand chose  to reformulate the majorizing
  measure condition in terms of what he called
    chainings, and he emphasized the ``generic chaining" :
    he showed that the quantity $\inf_{\mu} {\cl I}(\mu,X) $
    that is equivalent (with universal constants independent of $X$ or $S$)
     to $\E\sup\n_{s\in S} X_s$ is similarly equivalent
     to
     $$\inf \sup\nolimits_{s\in S}  \sum\nolimits_{n\ge 0}  2^{n/2} d_X(s,S_n)$$
     where the infimum is now taken over
     all sequences of subsets $S_n\subset S$
     with cardinality $|S_n|<2^{2^n}$ for all $n$. See \cite{Ta3,Ta2}.
  \begin{rem}\label{osc}
  For any $\R$-Gaussian process (or any real valued process such that $\{X_s\mid s\in S\}$
  and $\{-X_s\mid s\in S\}$ have the same distribution)
  we have
  $$\E\sup\n_{s\in S} X_s =\E \sup\n_{s,t\in S} |X_s-X_t|/2.$$
  Indeed,
  $\E \sup\n_{s,t\in S} |X_s-X_t|=\E \sup\n_{s,t\in S} (X_s-X_t)=
  \E \sup\n_{s } X_s+  \E \sup\n_{t\in S} -X_t=2\E\sup\n_{s\in S} X_s$.
  \end{rem}
 \begin{cor} Let $(f_n)$ be a (real or complex)
 subgaussian sequence with $sg(\{f_n\})\le 1$.
 Let $(g_n)$ be a normalized i.i.d. $\R$-Gaussian sequence.
   Let $E_g$ (resp. $E_f$) be the linear span of $(g_n)$ (resp. $(f_n)$).
   Let $u:\ E_g \to E_f$ be the linear operator such that
   $u(g_n)=f_n$.
   Then for any $n$ and any  $x_1,\cdots, x_n\in E_g$ we have
    \begin{equation}\label{301}\E \sup\n_j |u(x_j)|\le C_0 \E  \sup\n_j |x_j|,\end{equation}
   where $C_0$ is a numerical constant. 
  \end{cor}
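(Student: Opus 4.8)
The plan is to obtain \eqref{301} as a direct consequence of Talagrand's comparison principle (Theorem~\ref{tal}); essentially all that is needed is to repackage the data as two processes, a Gaussian one and a subgaussian one, that satisfy the comparison hypothesis, plus some elementary bookkeeping to reach suprema of absolute values.

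First I would treat the real case. Fix $x_1,\dots,x_n\in E_g$ and work on the finite index set $\tilde S=\{x_1,\dots,x_n,-x_1,\dots,-x_n\}$. On $\tilde S$ put the $\R$-Gaussian process $X_x=x$ and the $\R$-subgaussian process $Y_x=u(x)$; these are indeed Gaussian, resp.\ subgaussian, processes in the sense of the paper since their linear spans sit inside $E_g$, resp.\ $E_f$. To check the hypothesis of Theorem~\ref{tal}, write $x-x'=\sum_k d_k g_k$ for $x,x'\in\tilde S$. Since $(g_n)$ is an orthonormal system, $\|X_x-X_{x'}\|_2=(\sum_k d_k^2)^{1/2}$, whereas $Y_x-Y_{x'}=u(x-x')=\sum_k d_k f_k$, so $sg(\{f_n\})\le 1$ (i.e.\ \eqref{40b}) gives $sg(Y_x-Y_{x'})\le(\sum_k d_k^2)^{1/2}=\|X_x-X_{x'}\|_2$. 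Theorem~\ref{tal} then yields $\E\sup_{x\in\tilde S}Y_x\le\tau\,\E\sup_{x\in\tilde S}X_x$. Because $u$ is linear, $Y_{-x_j}=-u(x_j)$ and $X_{-x_j}=-x_j$, so pointwise $\sup_{x\in\tilde S}Y_x=\max_j|u(x_j)|$ and $\sup_{x\in\tilde S}X_x=\max_j|x_j|$; this is \eqref{301} with $C_0=\tau$. (One could instead keep the index set $\{x_1,\dots,x_n\}$ and pass to absolute values via Remark~\ref{osc}.)

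For the complex case I would reduce to the real one. Splitting $f_n=\Re f_n+i\,\Im f_n$, the defining inequality \eqref{40b'} applied with purely real coefficients, resp.\ purely imaginary coefficients, shows that $(\Re f_n)$ and $(\Im f_n)$ are each $\R$-subgaussian sequences with $sg\le sg(\{f_n\})\le 1$. Let $u_1$ and $u_2$ be the real-linear maps on $E_g$ determined by $u_1(g_n)=\Re f_n$ and $u_2(g_n)=\Im f_n$, so that $u=u_1+i\,u_2$ on all of $E_g$ by linearity. Applying the real case to $u_1$ and to $u_2$ gives $\E\sup_j|u_1(x_j)|\le\tau\,\E\sup_j|x_j|$ and $\E\sup_j|u_2(x_j)|\le\tau\,\E\sup_j|x_j|$. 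Since $|u(x_j)|\le|u_1(x_j)|+|u_2(x_j)|$, one has $\sup_j|u(x_j)|\le\sup_j|u_1(x_j)|+\sup_j|u_2(x_j)|$, and taking expectations proves \eqref{301} with $C_0=2\tau$.

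The only genuinely deep ingredient is Theorem~\ref{tal}; granting it, the crux of the argument is the verification of its hypothesis, which reduces precisely to the defining subgaussian inequality for the constant $sg(\{f_n\})$ together with the orthonormality of $(g_n)$ in $L_2$. Everything else --- enlarging the index set to replace $\sup$ by $\sup$ of absolute values, and splitting into real and imaginary parts in the complex case --- is routine. I do not foresee any real obstacle beyond the (highly nontrivial, but here assumed) Theorem~\ref{tal}.
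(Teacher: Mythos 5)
Your proof takes the same route as the paper (Talagrand's comparison, Theorem~\ref{tal}) but handles the passage from $\sup$ to $\sup$-of-absolute-values more neatly. Where the paper applies Theorem~\ref{tal} to $\{y_s\}$ and to $\{-y_s\}$, then bounds $\E\sup_s|y_s|$ via $\E|y_1|+\E\sup_s|y_s-y_1|$ with an extra estimate $\E|y_1|\le\|y_1\|_2\le\|x_1\|_2\lesssim\E|x_1|$, your doubled index set $\tilde S=\{\pm x_1,\dots,\pm x_n\}$ makes $\sup_{\tilde S}Y_x=\max_j|u(x_j)|$ and $\sup_{\tilde S}X_x=\max_j|x_j|$ pointwise, so one application of Theorem~\ref{tal} yields \eqref{301} with $C_0=\tau$ in the real case — a cleaner derivation and a marginally better constant.

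One small omission: you should also address the case in which $E_g$ is the $\C$-linear span and $u$ is $\C$-linear. Your identity $u=u_1+i\,u_2$ (with $u_1,u_2$ the $\R$-linear maps determined by $u_1(g_n)=\Re f_n$, $u_2(g_n)=\Im f_n$) works out of the box only on the $\R$-linear span; for complex coefficients one needs an extra reduction. The paper's fix is short: write $x=\sum(a_k+ib_k)g_k$, observe $u(x)=u(\sum a_k g_k)+i\,u(\sum b_k g_k)$ and $|u(x)|\le|u(\sum a_k g_k)|+|u(\sum b_k g_k)|$, and since $\sum a_k g_k=\Re(x)$ and $\sum b_k g_k=\Im(x)$ pointwise (the $g_k$ being real-valued), one has $\E\sup_j|\Re(x_j)|\le\E\sup_j|x_j|$ and likewise for the imaginary parts, after which the $\R$-linear case applies. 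With that line added, your argument is complete and is essentially the paper's proof, slightly streamlined.
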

   \begin{proof} 
   Assume  first that $(f_n)$ is  $\R$-subgaussian and $sg(\{f_n\})\le 1$.
   Assume the linear spans and $u$ are all in the $\R$-linear sense.
   Let $y_j=u(x_j)$. Then, since  $sg(\{f_n\})\le 1$,
   for any $1\le s,t\le n$ we have
   $sg(y_s-y_t)\le \|x_s-x_t\|_2$.
   Also $sg(y_s )\le \|x_s \|_2$.
   A fortiori (see Lemma \ref{im})  we have $\|y_s \|_2\le \|x_s \|_2$.
By Theorem \ref{tal} with $S=\{1,\cdots,n\}$ we have
   $\E\sup y_s \le \tau \E\sup x_s$, and also
   $\E\sup -y_s \le \tau \E\sup x_s$. Therefore 
   $\E\sup\n_{s,t\in S} |y_s-y_t|=\E\sup\n_{s,t\in S} y_s-y_t
   \le 2 \tau   \E\sup x_s
   \le 2\tau \E\sup |x_s|
   $, and hence
   $$\E\sup\n_{s\in S} |y_s |\le \E|y_1|+\E\sup\n_{s\in S} |y_s-y_1 |
   \le \|y_1 \|_2+2\tau \E\sup |x_s|\le \|x_1 \|_2+2\tau \E\sup |x_s|$$
   and since $ \|x_1 \|_2\le  (2/\sqrt \pi)\|x_1 \|_1$ we obtain the announced
   result with $C_0\le 2/\sqrt \pi +2\tau$.\\
   Now assume $(f_n)$ is  $\C$-subgaussian but
 with $u,E_g,E_f$ still with respect to $\R$-linearity, the first part of the proof can be applied separately to
 the real and imaginary parts of $(f_n)$, then the triangle inequality
 yields \eqref{301} with a double constant.
 Lastly, if $E_g$ is the $\C$-linear span and
 $u$ is  $\C$-linear, if $x=\sum (a_k+i b_k) g_k$
 we have $u(x)= u(\sum a_k  g_k) + i u( \sum b_k g_k)$
 and hence $| u(x)|\le |u(\sum a_k  g_k)|+ |u( \sum b_k g_k)|$
 and again the first part of the proof allows us to conclude
 that  \eqref{301} holds. 
    \end{proof}
  
\def\p{\cl P}
\def\g{\gamma}
 We will need one more characterization
 of subgaussian sequences,
 for which the next definition
will be very useful.

 \begin{dfn}\label{dom} Consider families $\{\varphi_n\}\subset L_1(T,m)$,
 and   $\{\g_n\}\subset L_1(T',m')$ indexed by the same index set $I$.
 We   say
 that $(\varphi_n)$ is $C$-dominated by $(\g_n)$ if  
 \begin{equation}\label{1nb} \text{there is an operator }   u:\ L_1(m')\to L_1(m)   \text{ with }
 \|u\|\le C \text{ such  that }\\
  \ u(\g_n)=\varphi_n\ \forall n\in I .
 \end{equation}
 \end{dfn}
 
 \begin{pro}[\cite{Lev}, see also \cite{Pi33}]\label{r42} The sequence
 $(\varphi_n)$ is $C$-dominated by $(\g_n)$ iff
 for any $N$ and any $f_1 ,\cdots,f_N$ in  the linear span
 of $\{  \g_n \}$ of the form $f_i=\sum\n_j a_{ij} \g_j$, the associated
 $\tilde f_i=\sum\n_j a_{ij} \varphi_j$ satisfy
\begin{equation}\label{21}  \| \sup\n_i |\tilde f_i| \|_1 \le C \| \sup\n_i |  f_i| \|_1 .\end{equation}
 \end{pro}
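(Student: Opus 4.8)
The plan is to prove the equivalence of the operator-domination condition \eqref{1nb} with the uniform inequality \eqref{21} by a routine functional-analytic duality argument, the same one that underlies every ``factorization through $L_1$'' statement of this kind. The direction \eqref{1nb} $\Rightarrow$ \eqref{21} is immediate: if $u:L_1(m')\to L_1(m)$ has $\|u\|\le C$ and $u(\g_n)=\varphi_n$, then for $f_i=\sum_j a_{ij}\g_j$ we get $\tilde f_i=u(f_i)$, and hence $|\tilde f_i|=|u(f_i)|\le u^{\#}(|f_i|)$ in a suitable sense; more carefully, one notes that $\sup_i|\tilde f_i|=\sup_i|u(f_i)|$ and applies the standard fact that a bounded operator on $L_1$ is dominated (in modulus) pointwise-a.e.\ by a positive operator of the same norm — or, even more simply, one passes through finitely many $f_i$ at a time, chooses measurable sign/phase functions realizing the suprema, and uses $\|u\|\le C$ together with a lattice argument. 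Either way $\|\sup_i|\tilde f_i|\|_1\le C\|\sup_i|f_i|\|_1$.

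The substantive direction is \eqref{21} $\Rightarrow$ \eqref{1nb}, and here I would argue by a Hahn--Banach/compactness scheme. Let $E'\subset L_1(m')$ be the linear span of $\{\g_n\}$ and define $u_0:E'\to L_1(m)$ by $u_0(\g_n)=\varphi_n$ on the span (well-definedness is forced by \eqref{21} applied with $N=1$: if $\sum a_j\g_j=0$ then $\|\sum a_j\varphi_j\|_1\le C\cdot 0$). The goal is to extend $u_0$ to all of $L_1(m')$ with norm $\le C$. The obstruction to a naive Hahn--Banach extension is that $L_1(m)$ is not injective; the standard remedy is to reformulate \eqref{1nb} as a statement about $\sup$-norms. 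Concretely, $\|\sup_i|f_i|\|_1$ is exactly the norm of the tuple $(f_i)$ in $L_1(m';\ell_\infty^N)$, and $L_1$ of an $\ell_\infty^N$-valued space is the \emph{predual} side of an injective object, or equivalently one dualizes: $\|\sup_i|f_i|\|_1=\sup\{\,|\sum_i\langle f_i,h_i\rangle|\,\}$ over tuples $(h_i)$ in $L_\infty(m')$ with $\sum_i|h_i|\le 1$ a.e. So \eqref{21} says precisely that the functional $(h_i)\mapsto \sum_i\langle \tilde f_i,h_i\rangle$ is bounded by $C$ on the corresponding ball in $L_\infty(m;\ell_1^N)$, uniformly in $N$. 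One then invokes the classical principle (this is exactly the content of the cited \cite{Lev}; see also \cite{Pi33}) that such a family of uniform inequalities, together with a weak-$*$ compactness/ultrafilter limit over $N$ and over finite subfamilies, produces a single operator $u$ factoring the $\varphi_n$'s through the $\g_n$'s with $\|u\|\le C$. In more hands-on terms: for each finite $F$ one builds an approximate extension $u_F$ on the span of $\{\g_n:n\in F\}$ as an element of the unit ball (scaled by $C$) of the space of operators into $L_1(m)$, realized inside a suitable dual ball that is weak-$*$ compact by Banach--Alaoglu; taking a cluster point along the net of finite sets $F$ gives the desired global $u$ with $u(\g_n)=\varphi_n$ for all $n$.

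The main obstacle — and the only place where something must actually be checked rather than quoted — is the passage from the scalar-valued duality $\|\sup_i|f_i|\|_1=\sup_{(h_i)}\sum\langle f_i,h_i\rangle$ to the \emph{existence of the operator}: one has a consistent family of partial operators satisfying a uniform bound, and one must assemble them into one operator on all of $L_1(m')$, not merely on $E'$. This is where one genuinely uses that $C(K)$-spaces (or $\ell_\infty$) are injective, so that a norm-$C$ map into $L_1(m)$ can be viewed, after dualizing twice, as living in a weak-$*$ compact set on which cluster points can be extracted; equivalently one extends by Hahn--Banach ``one coordinate functional at a time'' and checks the resulting map is linear and bounded by $C$. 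I would present this by citing \cite{Lev} for the precise extension lemma and giving the short duality computation identifying $\|\sup_i|f_i|\|_1$ with the relevant $\ell_\infty^N$-ball functional, so that the reader sees why \eqref{21} is literally the family of inequalities the extension lemma requires. Everything else — well-definedness of $u_0$, the trivial forward direction, and the reduction to finite $N$ — is routine.
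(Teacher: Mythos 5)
Your strategy — dualize via $L_1(\ell_\infty^N)$-norms, apply Hahn--Banach, and close with a compactness argument — is the paper's, so the outline is right and the references you cite are the right ones. But the ``hands-on'' version you sketch has gaps at the two places you yourself flag as the substance of the argument, and both are precisely what the paper's proof supplies.

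First, the Hahn--Banach step must be applied to the \emph{single} functional $(f_1,\dots,f_n)\mapsto\sum_i\int_{A_i}u_0(f_i)$ on $E^n\subset L_1(m';\ell_\infty^n)$, where $A_1,\dots,A_n$ is a finite partition of the target $T$ into atoms; the paper reduces to a finitely atomic target for exactly this purpose. The extension is represented by a tuple $(\Phi_i)\in L_\infty(m';\ell_1^n)$ with $\|\sum_i|\Phi_i|\|_\infty\le C$, from which one reads off $\tilde u(x)=\sum_i 1_{A_i}\,m(A_i)^{-1}\int\Phi_i x\,dm'$ with $\|\tilde u\|\le\|\sum_i|\Phi_i|\|_\infty\le C$, and consistency with $u_0$ on $E$ follows by testing $\xi$ on tuples with a single nonzero entry. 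Extending ``one coordinate functional at a time'' by $n$ separate Hahn--Banach applications would bound each $\|\Phi_i\|_\infty$ but not $\|\sum_i|\Phi_i|\|_\infty$, and the operator-norm estimate would be lost; the $\ell_1^n$-structure of the representing tuple has to come from a single extension. Second, the weak-$*$ cluster point over the net of finite subalgebras of the target is a priori an operator into $L_1(m)^{**}$, not $L_1(m)$; one must then compose with the norm-one projection $L_1(m)^{**}\to L_1(m)$, which the paper constructs in Remark~\ref{r20} via martingale convergence. Your appeal to the injectivity of $C(K)$ or $\ell_\infty$ does not, by itself, produce this projection — as written the argument ends one dual too far out.
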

  \begin{proof} 
  Let $E$ be the  linear span
 of $\{  \g_n \}$. Assume \eqref{21}. Our assumption implies a fortiori that
 $\| \sum\n_j a_{j} \varphi_j\|_1\le \| \sum\n_j a_{j} \g_j\|_1$. Therefore
  we can unambiguously define
 $u:\ E \to L_1(T',m')$, by setting $u(\sum\n_j a_{j} \g_j)=
 \sum\n_j a_{j} \varphi_j$. Our assumption then means
 that $\| \sup|u(f_i)|  \|_1 \le  C \| \sup|f_i|  \|_1 $
 for any finite set $(f_i)$ in $E$. The content of the Proposition
 is that $u$ admits an extension $\tilde u: \ L_1(m)\to L_1(m')$
 with $\|\tilde u\|\le C$.
  We will reduce the proof
  to the following claim. Assume that $(T',m')$ is an atomic measure space
  and that $T'$ is partitioned into a finite set
  of disjoint atoms $A_1,\cdots,A_n$.
  If for any $f_1,\cdots,f_n\in E$ we have
  $$|\sum\n_1^n  \int_{A_i} u(f_i) dm'  |\le C \| \sup|f_i|  \|_1 $$
  then $u$ admits an extension $\tilde u: \ L_1(m)\to L_1(m')$
 with $\|\tilde u\|\le C$.\\
 Let us first accept this claim.
 Note that $|\sum\n_1^n  \int_{A_i} u(f_i) dm'  |\le \| \sup|u(f_i)|  \|_1$.
 Thus the claim is nothing but the Proposition in the case
 when $(T',m')$ is atomic with finitely many atoms.
 Thus using the directed net of finite subalgebras of $(T',m')$
 one can get an extension $\tilde u:\ L_1(T,m)\to L_1(T',m')^{**}$
 with norm $\le C$,
 and then, using  the fact that there is a projection of norm $1$
  from $L_1(T',m')^{**}$ to $L_1(T',m')$ (see Remark \ref{r20}), we get a $\tilde u$
  with range into $L_1(T',m')$. Thus it suffices to check the claim.
  This is an application of Hahn-Banach.
  Let $\cl E=E^n$  equipped with the norm induced by 
  $L_1(m; \ell^\infty_n)$, or more explicitly for all $f=(f_1,\cdots,f_n)\in \cl E$
  we set 
  $\|(f_1,\cdots,f_n)\|_{\cl E} =\| \sup|f_i|  \|_1$.
  Let $\xi\in \cl E^*$ be the linear form defined   for all $f \in \cl E$
 by
  $$\xi (f)= \sum\n_1^n  \int_{A_i} u(f_i) dm' .$$
  By our assumption $\|\xi\|_{\cl E^*}\le C$.
  Let $\tilde \xi \in L_1(m; \ell^\infty_n)^*$ be the Hahn-Banach
  extension of $\xi$, such that for all $F=(F_1,\cdots,F_n)\in L_1(m)^n$
    $$|\tilde \xi (F)|\le C \| \sup|F_i|  \|_1.$$
Obviously we have $\Phi_1,\cdots,\Phi_n$ in $L_\infty(m)$
such that $\| \sum   | \Phi_i |\|_\infty \le C$ and 
such that $\tilde \xi (F)=\sum \int \Phi_i F_i dm$. 
Note   that  for any $f\in \cl E$ we have
$\sum \int \Phi_i f_i dm= \xi(f)=  \sum\n_1^n  \int_{A_i} u(f_i) dm' .$
Let then
    $\tilde u(x)= \sum\n_{i} 1_{A_i} m'(A_i)^{-1} (\int \Phi_i x dm) $.
    Clearly $\|\tilde u\|\le \| \sum  | \Phi_i |\|_\infty \le C$,
    and (recalling that $u(x)\in \text{span}[1_{A_i}]$) we have
    $$\forall x\in E\quad \tilde u(x)=\sum\n_{i} 1_{A_i} m'(A_i)^{-1}\int_{A_i} u(x) dm'=  \sum\n_{i} 1_{A_i}u(x)  =u(x).$$  This proves the claim.
 \end{proof}
 \begin{rem}\label{r20} Let $(T, {\cl A},m)$ be a countably generated probability space,
 so that there is an increasing filtration $({\cl A}_n)$ of finite 
 $\sigma$-subalgebras whose union generate ${\cl A}$.
 The classical fact that there is a norm $1$-projection   $P:\ L_1(T,m)^{**}\to L_1(T,m)$ is easy to prove using martingales as follows.
 Just observe that any $f\in L_1(T,{\cl A},m)^{**}=L_\infty(T,{\cl A}, m)^{*}$
 induces by restriction to $L_\infty(T,{\cl A}_n,m)$ a sequence $(f_n)$ with
 $f_n\in L_1(T,{\cl A}_n,m) =L_\infty(T,{\cl A}_n, m)^{*}$. It is easy to see that
 $(f_n)$  is a martingale bounded in $L_1(T,{\cl A},m)$ by the norm
 of $f$ in $L_1(T,{\cl A},m)^{**}$.
 By the martingale convergence theorem, $(f_n)$ converges a.s.
 to a limit $f_\infty\in L_1(T,{\cl A},m)$, with $\|f_\infty  \|_1\le \|f\|$.
 In general the convergence does not hold in $L_1(T,{\cl A},m)$.  However if our original 
 $f\in L_1(T,{\cl A},m)^{**}$ happens to be in $L_1(T,{\cl A},m)$
 then the convergence holds in $L_1(T,{\cl A},m)$ and $f_\infty=f$.
 Thus if we set $P(f)=f_\infty$, we obtain the desired projection.
 See our recent book \cite{Pi4} for basic  martingale convergence theorems and
for more information of the many connections of martingale theory with
Banach space theory and
harmonic analysis.
  \end{rem}
 We denote by $(g_n)$ an i.i.d. sequence of normalized
 $\R$-Gaussian  random variables on some probability space
 $(\Omega,\P)$. \\
 Given a sequence $\{\varphi_n\}\subset L_1(T,m)$,
 we denote by $\{\varphi_{n,k}\}\subset L_1(T^\N,m^{\otimes \N})$ the family
 defined by
 $$ \forall t\in T^\N\quad \varphi_{n,k} (t)= \varphi_{n}(t_k).$$
 
 Note that the definition   of subgaussian (Definition \ref{40b})   shows that if $(\varphi_n)$ is subgaussian,
 $\{\varphi_{n,k}\}$ is also subgaussian with $sg(\{\varphi_{n,k}\})=sg(\{\varphi_{n}\})$.
 
Concerning Definition \ref{dom}: we will need to consider $(\varphi_n)$ 
such that   $\{\varphi_{n,k}\}$
 is $C$-dominated by $(g_{n,k})$. Of course
 the reader will note that the sequences $(g_{n,k})$ and $(g_{n})$ have the same distribution, so we will say (abusively) in this case that   $\{\varphi_{n,k}\}$
 is $C$-dominated by $(g_n)$.  
 \\
 We will denote by $C_{dom}(\{\varphi_{n}\}) $  the smallest    $C$
 such that $\{\varphi_{n}\}$
 is $C$-dominated by $(g_n)$.  

   \begin{pro}\label{p8}
There is a numerical constant $c_1$ such that 
any $C$-subgaussian sequence $\{\varphi_n\}\subset L_1(T,m)$
is $c_1C$-dominated by $(g_n)$. \\ More precisely, 
assuming $\E \varphi_n=0$ for all $n$, the following are equivalent.
\begin{itemize}
\item[(i)]  For some $C$ the sequence $\{\varphi_n\}\subset L_1(T,m)$ is $C$-subgaussian.
\item[(ii)] For some $C'$ the sequence $\{\varphi_{n,k}\}$ is $C'$-dominated by $(g_n)$. 
 \end{itemize}
 Moreover, we have
 $$c_1^{-1} C_{dom}(\{\varphi_{n,k}\} ) \le  {sg}(\{\varphi_n\}) \le c_2 C_{dom}(\{\varphi_{n,k}\})$$
 where $ c_2$ is another positive constant independent of $\{\varphi_n\}$.
  \end{pro}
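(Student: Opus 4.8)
The plan is to prove the two displayed inequalities separately, since each one encodes one implication. For $C_{dom}(\{\varphi_{n,k}\})\le c_1\,sg(\{\varphi_n\})$ — which in particular yields the first assertion, that a $C$-subgaussian sequence is $c_1C$-dominated — I would combine inequality \eqref{301} (the corollary of Talagrand's comparison Theorem \ref{tal}) with the characterization of domination in Proposition \ref{r42}. For the reverse bound $sg(\{\varphi_n\})\le c_2\,C_{dom}(\{\varphi_{n,k}\})$ I would exploit the key feature of the lift, namely that it manufactures genuine i.i.d.\ copies, feed linear combinations of the $g_{n,k}$ into Proposition \ref{r42} to get a maximal inequality over such copies, and then upgrade that maximal inequality to a subgaussian tail estimate.

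For the first inequality: as observed just before the statement, $sg(\{\varphi_{n,k}\})=sg(\{\varphi_n\})=:s$, and after reindexing $\N\times\N$ by $\N$ the family $(g_{n,k})$ is an i.i.d.\ normalized $\R$-Gaussian sequence. Applying \eqref{301} to the subgaussian sequence $s^{-1}\varphi_{n,k}$ and rescaling by homogeneity (Lemma \ref{im}), the linear map $u$ determined by $u(g_{n,k})=\varphi_{n,k}$ satisfies $\E\sup_j|u(x_j)|\le C_0\,s\,\E\sup_j|x_j|$ for every finite family $(x_j)$ in the span of $(g_{n,k})$. Since $\E\sup_j|h_j|=\|\sup_j|h_j|\|_1$ for any finite family $(h_j)$, this is exactly condition \eqref{21} of Proposition \ref{r42} with $C=C_0s$; hence $\{\varphi_{n,k}\}$ is $C_0s$-dominated by $(g_n)$, which is the desired bound with $c_1=C_0$ (and, specialized to $sg(\{\varphi_n\})\le C$, the opening sentence).

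For the reverse inequality, write $C'=C_{dom}(\{\varphi_{n,k}\})$ and let $u$ be an associated operator with $\|u\|\le C'$ and $u(g_{n,k})=\varphi_{n,k}$. Fix a finitely supported $(x_n)$ with $\sum_n x_n^2=1$ and set $\psi=\sum_n x_n\varphi_n$, $g^{(k)}=\sum_n x_n g_{n,k}$, $\psi^{(k)}=\sum_n x_n\varphi_{n,k}$, so that $u(g^{(k)})=\psi^{(k)}$, the $g^{(k)}$ are i.i.d.\ standard normal, and the $\psi^{(k)}$ are i.i.d.\ copies of $\psi$ on $(T^\N,m^{\otimes\N})$. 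Applying Proposition \ref{r42} (the ``domination implies the maximal inequality'' direction) with $f_k=g^{(k)}$ for $1\le k\le K$ gives
\[
\E\max_{1\le k\le K}|\psi^{(k)}|\ \le\ C'\,\E\max_{1\le k\le K}|g^{(k)}|\ \le\ C'\sqrt{2\log(2K)}.
\]
I would then convert this into a tail bound: if $p=\P(|\psi|>t)<1$, then taking $K=\lceil 1/p\rceil$ makes $\P(\max_{k\le K}|\psi^{(k)}|>t)\ge 1-e^{-1}$, whence $(1-e^{-1})t\le C'\sqrt{2\log(4/p)}$, i.e.\ $\P(|\psi|>t)\le 4\exp(-\kappa t^2/C'^2)$ for a universal $\kappa>0$ and all $t\ge 0$. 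An elementary estimate of $\E\exp((\psi/s)^2)$ exactly as in the proof of Lemma \ref{42} then yields $\|\psi\|_{\psi_2}\le c'C'$, and since $\E\psi=\sum_n x_n\E\varphi_n=0$, Lemma \ref{42} gives $sg(\psi)\le c''C'$. As $(x_n)$ was an arbitrary unit vector of $\ell_2$, this means $sg(\{\varphi_n\})\le c_2\,C_{dom}(\{\varphi_{n,k}\})$; together with the first inequality this also establishes the equivalence of (i) and (ii).

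I expect the step with the least routine content to be the last passage in the reverse direction: one must recognize that it is precisely the passage to the lifted family $\{\varphi_{n,k}\}$ that supplies the i.i.d.\ copies $\psi^{(k)}$ used above (a mere $C$-domination hypothesis on $\{\varphi_n\}$ itself would not furnish them), and that the maximal inequality has to be upgraded to a genuine subgaussian tail bound via the ``$K\approx 1/\P(|\psi|>t)$'' device; more naive routes — for instance normalizing partial sums of the $\psi^{(k)}$ and invoking the central limit theorem — only recover the much weaker estimate $\|\psi\|_2\le C'$.
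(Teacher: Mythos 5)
Your argument is correct, and in the forward direction it coincides with the paper's: both combine the corollary of Talagrand's comparison principle, inequality \eqref{301}, with Proposition \ref{r42}, after observing $sg(\{\varphi_{n,k}\})=sg(\{\varphi_n\})$. In the reverse direction you take a slightly different, more self-contained route: where the paper cites Lemma \ref{leeq} to pass from the domination-induced bound on $\E\sup_k(\log(k+1))^{-1/2}|f_k|$ directly to $\|f\|_{\psi_2}$, you instead apply the maximal inequality of Proposition \ref{r42} to a finite block of i.i.d.\ copies $g^{(1)},\dots,g^{(K)}$, extract a tail bound via the ``$K\approx 1/\P(|\psi|>t)$'' device, and then upgrade to $\|\psi\|_{\psi_2}$ as in Lemma \ref{42}. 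This is essentially an inlined version of the converse half of Lemma \ref{leeq} — the paper's proof of that lemma runs on exactly this trick — so the two treatments of the reverse implication are conceptually equivalent; yours is a bit longer but avoids the detour through the $\E\sup_n(\log(n+1))^{-1/a}|f_n|$ functional and makes explicit why the lifted family $\{\varphi_{n,k}\}$ is needed (to furnish the i.i.d.\ copies $\psi^{(k)}$), a point the paper leaves implicit. Both give the same numerical-constant quality of estimate.
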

 \begin{proof}[Sketch]
 The first assertion is a consequence of  Talagrand's comparison principle
 together with Proposition \ref{r42}.
 From this we deduce $C_{dom}(\{\varphi_{n}\} )   \le c_1{sg}(\{\varphi_n\})$. 
 As we already observed,
 ${sg}(\{\varphi_n\})$ is equal to
 ${sg}(\{\varphi_{n,k}\}) $. 
 Thus $C_{dom}(\{\varphi_{n,k}\})   \le c_1{sg}(\{\varphi_{n,k}\}) =c_1{sg}(\{\varphi_n\}) $, and hence (i) $\Rightarrow$ (ii).\\
 Conversely, if (ii) holds,
 for any $f=\sum x_n \varphi_n$
 with $\sum |x_n|^2=1$ we have (with the notation in Lemma \ref{leeq})
 $$\E \sup\n_{k\ge 1} (\log(k+1))^{-1/2} |f_k|
 \le C_{dom}(\{\varphi_{n,k}\}) \E \sup\n_{k\ge 1} (\log(k+1))^{-1/2} |g_k|$$
 and hence by Lemma \ref{leeq}
 $\|f\|_{\psi_2}\le c'_2 C_{dom}(\{\varphi_{n,k}\})$
 for some numerical constant $c'_2$.
 By Lemma \ref{42}
 we obtain $sg(f)\le c_2  C_{dom}(\{\varphi_{n,k}\})$
 for some numerical constant $c_2$,
 or equivalently $sg(\{\varphi_n\})\le c_2  C_{dom}(\{\varphi_{n,k}\})$, which proves  (ii) $\Rightarrow$ (i).
\end{proof}

\section{Subgaussian sequences in harmonic analysis}

More subgaussian examples come from Fourier analysis.
Let $0<k(0)<k(1)<\cdots<k(n)<\cdots$ be a sequence of integers
 such that
\begin{equation}\label{h01}\inf\n_{n} \{k(n+1)/k(n)\}>1.\end{equation}
Such sequences are called ``Hadamard lacunary".
The simplest example is the sequence $k(n)=2^n$.
The associated  sequence
$$f_n=\exp{(i k(n) t)}$$
on $[0,2\pi],dt/2\pi$ is subgaussian. 
 We will check this in Proposition
\ref{pqi}.
Of course
the real (or the imaginary) parts
also form a subgaussian sequence.
 Although these are not independent   random variables
on the unit circle, it turns out that they behave in many ways
as independent ones. For instance, while the sequence
$f_n(t)=\sin(2^nt)$ is not independent,   the 
$\pm1$-valued
sequence
formed of its signs $({\rm sign}( f_n(t)))$ is stochastically independent.

For any subset  $\Lambda \subset \Z$ not containing $0$
we   say that  $\Lambda $ 
is subgaussian if the system
$$\tilde\Lambda=\{\exp{(i k  t)}\mid k\in \Lambda\}$$
is subgaussian. We set by convention
$$sg(\Lambda)=sg(\tilde\Lambda).$$

More generally we will consider subsets $\Lambda$
of a discrete Abelian group $\hat G$. Then $\Lambda$ is formed
of continuous characters on the dual group $G$, which is a compact Abelian group equipped with its normalized Haar measure $m_G$.
In that case $sg(\Lambda)$ is the subgaussian constant
of the family $\{\gamma\mid \gamma\in \Lambda\}$ viewed
as random variables on $(G,m_G)$.

The sequence $\{2^n\}$ is close to independent
in the following sense:
\begin{dfn} A subset $\Lambda\subset \Z$ is called quasi-independent
if  the   sums $\sum\n_{n\in A} n$ are distinct integers
when $A$ runs over all the finite subsets of $\Lambda$.
\end{dfn} 
\begin{rem}\label{r10} Any sequence $\{k(n)\}$ 
such that
$k(n)>\sum\n_{j<n} k(j)$
(for example $k(n)=2^n$) is clearly quasi-independent.
\end{rem} 
A finite set $\Lambda\subset \Z$ is quasi-independent iff
$$\int \prod\n_{n\in \Lambda} (1+ e^{int}+ e^{-int}) dt/2\pi=1,$$
or equivalently iff for some $0<\d\le 1$
$$\int \prod\n_{n\in \Lambda} (1+ \d(e^{int}+ e^{-int})) dt/2\pi=1.$$
Indeed, the preceding integral can be rewritten as
$1+\d a_1+\d^2 a_2+\cdots+\d^{|\Lambda|}  a_{|\Lambda|}$
where $a_1,a_2,\cdots$ are non-negative integers.\\
From now on let $dm(t)=dt/2\pi$ on $[0,2\pi]$.
We have then
\begin{pro}\label{pqi} 
Any quasi-independent sequence $\Lambda\subset \Z$ 
is  subgaussian on $([0,1],dt/2\pi)$ with constant $\le 2$. More generally,
any Hadamard lacunary sequence is  subgaussian.
\end{pro}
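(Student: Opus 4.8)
The plan is to prove directly that the character system $\tilde\Lambda=\{e^{ikt}\mid k\in\Lambda\}$ is $\C$-subgaussian, which by definition amounts to showing that the combined real system $\{\cos(kt),\ \sin(kt)\mid k\in\Lambda\}$ is $\R$-subgaussian. Since a subgaussian estimate involves only finitely many coefficients, I may assume $\Lambda$ finite. For finitely supported $(c_k)_{k\in\Lambda}\in\C^\Lambda$, writing $c_k=r_k e^{i\beta_k}$ with $r_k=|c_k|\ge 0$ turns an arbitrary real linear combination of the combined system into $\Re\sum_k c_k e^{ikt}=\sum_k r_k\cos(kt+\beta_k)$, so everything reduces to the single estimate
\[
\int_0^{2\pi}\exp\Big(\sum\n_{k\in\Lambda} r_k\cos(kt+\beta_k)\Big)\,\frac{dt}{2\pi}\ \le\ \exp\Big(\tfrac12\sum\n_{k\in\Lambda} r_k^2\Big),
\]
valid for all $r_k\ge 0$ and all phases $\beta_k$; this will in fact yield $sg(\tilde\Lambda)\le 1$, a fortiori $\le 2$.

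First I would prove a pointwise bound of Riesz-product type. The naive try $e^{r\cos\theta}\le e^{r^2/2}(1+r\cos\theta)$ breaks down once $r$ approaches $1$, so instead I use the convexity bound already recorded in the paper: by \eqref{chp6aeqH21} (with $x=r_k$ and $d=\cos(kt+\beta_k)\in[-1,1]$) followed by $\cosh(r_k)\le\exp(r_k^2/2)$ from \eqref{chp6aeqH22},
\[
\exp\big(r_k\cos(kt+\beta_k)\big)\ \le\ \cosh(r_k)\big(1+\tanh(r_k)\cos(kt+\beta_k)\big)\ \le\ e^{r_k^2/2}\big(1+\delta_k\cos(kt+\beta_k)\big),
\]
where $\delta_k:=\tanh(r_k)\in[0,1)$. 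The point is that each factor $1+\delta_k\cos(kt+\beta_k)\ge 1-\delta_k>0$, so these inequalities may be multiplied over $k\in\Lambda$ without reversing, giving
\[
\exp\Big(\sum\n_{k\in\Lambda} r_k\cos(kt+\beta_k)\Big)\ \le\ \exp\Big(\tfrac12\sum\n_{k\in\Lambda} r_k^2\Big)\,\prod\n_{k\in\Lambda}\big(1+\delta_k\cos(kt+\beta_k)\big).
\]

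Next I would integrate in $t$ and use quasi-independence. The product on the right is a trigonometric polynomial all of whose frequencies have the form $\sum_k\vp_k k$ with $\vp_k\in\{-1,0,1\}$; since $\Lambda$ is quasi-independent, the only such combination equal to $0$ is the trivial one, exactly as in the identity displayed just before the statement (the coefficients $\delta_k$ and phases $\beta_k$ are irrelevant to that computation). Hence the mean of the Riesz product equals its constant term, namely $1$, and integrating the last display gives the required estimate, i.e. $sg(\tilde\Lambda)\le 1$. For a general Hadamard lacunary sequence $(k(n))$ with $q=\inf_n k(n+1)/k(n)>1$, I then choose $N=N(q)$ with $q^N>2$ and split the sequence into the $N$ subsequences indexed by the residues of $n$ modulo $N$; each has consecutive ratios $\ge q^N>2$, hence satisfies $k(\text{next})>\sum(\text{previous})$ along it and so is quasi-independent by Remark \ref{r10}. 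Writing a partial sum as $\sum_{j=1}^N h_j$ with $h_j$ supported on the $j$-th block and applying H\"older's inequality as $\E\exp(\sum_j h_j)\le\prod_j(\E\exp(N h_j))^{1/N}$, the first part (applied to the coefficients $Nc_k$ on each block) gives $sg(\tilde\Lambda)\le\sqrt N$, so $\tilde\Lambda$ is subgaussian.

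I expect the one genuinely substantive point to be the pointwise bound of the second paragraph: recognizing that \eqref{chp6aeqH21} together with $\cosh\le\exp((\cdot)^2/2)$ produces precisely a \emph{nonnegative} Riesz-product factor $1+(\tanh r_k)\cos(kt+\beta_k)$, after which the product can be integrated term by term and quasi-independence pins its mean to $1$. The reduction to the trigonometric estimate, the fact that a Hadamard lacunary set is a finite union of quasi-independent sets, and the H\"older gluing are all routine.
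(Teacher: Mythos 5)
Your argument is correct, and it takes a genuinely different (and more direct) route than the paper. The paper builds a \emph{randomized} Riesz product $F_z=\prod_{n\in\Lambda}(1+\Re(\bar z_n e^{int}))$ for $z\in\T^\Lambda$, observes that $\Re(f_z)\ast F_z=\Re(f)/2$, applies Jensen to the convolution by the probability density $F_z$ and then averages over $z\in\T^\Lambda$, reducing the estimate to $sg(\{z_n\})\le 1$ for the Steinhaus system (and uses quasi-independence only to guarantee $\int F_z\,dm=1$). You instead use a \emph{deterministic} Riesz product as a pointwise majorant: the Azuma inequality \eqref{chp6aeqH21}--\eqref{chp6aeqH22} gives $\exp(r_k\cos(kt+\beta_k))\le e^{r_k^2/2}(1+\tanh(r_k)\cos(kt+\beta_k))$ with a strictly positive right-hand factor, so multiplying over $k$ and integrating is legitimate, and quasi-independence pins the mean of the product to $1$. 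Your route avoids the convolution step and its inherent loss of a factor $2$ in the exponent, so it yields the sharper $sg(\tilde\Lambda)\le 1$ rather than $\le 2$; the paper's more roundabout argument is, however, the template it reuses later for Sidon $\Rightarrow$ subgaussian (Theorem \ref{rud}), where one convolves with the dual measures $\mu_z$. The treatment of Hadamard lacunary sequences is essentially the same splitting into $N$ subsequences with ratio $\ge 2$; the paper then cites Remark \ref{ru} (union of subgaussians), while your H\"older gluing $\E\exp(\sum_j h_j)\le\prod_j(\E\exp(Nh_j))^{1/N}$ is a self-contained version of the same fact and gives an explicit $sg\le\sqrt N$.
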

\begin{proof} We may assume $\Lambda$ finite and $0\not\in \Lambda$
 For any $z=(z_k)\in \T^\Lambda$
 let  
 $$F_z=  \prod\n_{n\in \Lambda} (1+ \Re(\bar z_n e^{int}) ) .$$
 Note that if $k=\sum\n_{n\in A} n$ we have $\hat {F_z}(k)=  \prod\n_{n\in A} (\bar z_n/2) $.
Moreover $F_z\ge 0$ and $\int F_z dm=1.$\\
Let $f_z=\sum\n_{n\in \Lambda} z_n x_n e^{int}$ and $f= \sum\n_{n\in \Lambda}   x_n e^{int}$.
Then $\Re(f_z)\ast F_z= \Re(f)/2. $  
Therefore by the convexity of the exponential function 
$$\int e^{\Re(f)/2} dm \le \int(\int F_z(s) e^{\Re(f_z(t-s))} dm(s)) dm(t)$$
and by Fubini and the translation invariance of $m$
this implies
$$\int e^{\Re(f)/2} dm \le \int F_z(s) dm(s) \int e^{\Re(f_z(t))} dm(t)= \int e^{\Re(f_z(t))} dm(t).$$
We now average the right hand side over $z$
with respect to the normalized Haar measure 
on the group $G= \T^\Lambda$. By Fubini this gives us
$$\int e^{\Re(f)/2} dm \le   \int e^{\Re(f_z(t))} dm(t)dm_G(z)=\int(\int e^{\Re(f_z(t))} dm_G(z)  )dm(t)$$
and since we already know that $sg(\{z_n\})\le 1$
(or equivalently $sg(\{\bar z_n\})\le 1$)
we find
$$\int e^{\Re(f)/2} dm \le \exp( \sum|x_n|^2/2) ,$$
and we conclude by homogeneity that $sg(\{ e^{int}\mid {n\in \Lambda}\})\le 2$.\\
It is easy to check that a Hadamard lacunary sequence is a finite union of sequences $(k(n))$ satisfying $ k(n+1)/k(n)\ge 2$ for all $n$. Since such sequences
are clearly quasi-independent (see Remark \ref{r10})
the second assertion follows.
\end{proof}

More generally, let us replace $\T$ by
a compact Abelian group $G$ equipped with its
normalized  Haar measure $m_G$. The dual group $\hat G$
is the discrete group formed of all the continuous characters
on $G$. A character is   a homomorphism $\gamma:\ G\to \T$.
The group operation on $\hat G$ is the pointwise product of characters.
When $G=\T$ the characters are all of the form
$\gamma_n(z)= z^n$  ($z\in \T$) for some $n\in \Z$.
The correspondence $\gamma_n\leftrightarrow n$
allows us to identify $\hat \T$ with $\Z$ as discrete groups
(pointwise multiplication on $\hat \T$ corresponds to addition on $\Z$).

\begin{rem}\label{ru2} The implication quasi-independent 
$\Rightarrow$ subgaussian remains clearly valid with the same proof
for a subset $\Lambda$ of any discrete group $\hat G$.
\end{rem}
\begin{thm}\label{th1} Let $(f_1,\cdots f_n)$ be subgaussian  
on a probability space $(T,m)$ with $sg(\{f_k\})\le s$.
Assume that $\|f_k\|_2=1$ and $\|f_k\|_\infty \le C$.
Then for any $0<\d<1/C$ there is a subset $\cl T\subset T$ with
$$\log |\cl T| \ge   n  (1 - \d  C)^2/( 2s^2 C^2),$$
such that for any $x\not = y\in \cl T$ we have 
$$(\sum\n_1^n |f_k(x) -f_k(y)|^2 )^{1/2} > \d \sqrt n.$$
\end{thm}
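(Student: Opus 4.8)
The plan is to regard the system as an embedding $\Phi\colon T\to\C^n$, $\Phi(t)=(f_1(t),\dots,f_n(t))$, and to produce $\cl T$ as a \emph{maximal} $\d\sqrt n$-separated subset of a suitable region $T_0\subset T$, where the separation refers to the pseudometric $\rho(x,y)=\|\Phi(x)-\Phi(y)\|_2=\bigl(\sum_k|f_k(x)-f_k(y)|^2\bigr)^{1/2}$. By maximality, the closed $\rho$-balls of radius $\d\sqrt n$ about the points of $\cl T$ cover $T_0$, so
\[
m(T_0)\ \le\ |\cl T|\cdot\sup\nolimits_{x\in\cl T}\,m\bigl(\{y:\rho(x,y)\le\d\sqrt n\}\bigr),
\]
and a lower bound on $|\cl T|$ will follow from an \emph{upper} bound on the measure of one such ball. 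This is the only place subgaussianity is used.

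The key estimate is that, for any $x$ with $\|\Phi(x)\|_2>\d\sqrt n$,
\[
m\bigl(\{y\in T:\ \rho(x,y)\le\d\sqrt n\}\bigr)\ \le\ \exp\!\Bigl(-\tfrac{(\|\Phi(x)\|_2-\d\sqrt n)^2}{2s^2}\Bigr).
\]
To prove it I would take the unit vector $\theta$ pointing ``in the direction of $\Phi(x)$'', namely $\theta_k=\overline{f_k(x)}/\|\Phi(x)\|_2$, and form the real functional $W=\Re\sum_k\theta_k f_k$, so that $W(x)=\|\Phi(x)\|_2$, while $\E W=0$ (Lemma \ref{im}) and, by \eqref{40b} in the real case or \eqref{40b'} in the complex case, $W$ is subgaussian with $sg(W)\le s\|\theta\|_2=s$. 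For $y$ in the ball, Cauchy--Schwarz gives $|W(y)-W(x)|\le\|\theta\|_2\,\rho(x,y)\le\d\sqrt n$, hence $W(y)\ge\|\Phi(x)\|_2-\d\sqrt n>0$; inserting this into the subgaussian tail bound \eqref{102} at level $c=\|\Phi(x)\|_2-\d\sqrt n$ yields the displayed inequality.

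It remains to force the centres to satisfy $\|\Phi(x)\|_2\ge\sqrt n/C$: then $c=\sqrt n(1-\d C)/C>0$ (here the hypothesis $\d<1/C$ enters), and the ball bound above becomes $\exp\bigl(-n(1-\d C)^2/(2s^2C^2)\bigr)$, which with the covering inequality gives $\log|\cl T|\ge n(1-\d C)^2/(2s^2C^2)+\log m(T_0)$. So I would set $T_0=\{t\in T:\ \sum_k|f_k(t)|^2\ge n/C^2\}$. Since $\|f_k\|_\infty\le C$ forces $\sum_k|f_k(t)|^2\le nC^2$ pointwise, while $\int_T\sum_k|f_k|^2\,dm=\sum_k\|f_k\|_2^2=n$, a one-line first-moment argument gives $m(T_0)\ge(C^2+1)^{-1}>0$; moreover $m(T_0)=1$ whenever $\sum_k|f_k|^2\equiv n$, in particular for systems of characters, which is the case relevant to Sidon sets.

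I expect the genuinely delicate point to be exactly the term $\log m(T_0)$: the argument as described gives the asserted bound \emph{on the nose} in the flat situation $\sum_k|f_k|^2\equiv n$, and in general it produces the stated main term up to a harmless additive constant depending only on $C$. Matching the clean statement for an arbitrary bounded orthonormal system would require either showing one may take $m(T_0)=1$ after a harmless normalization, or a self-improvement step (e.g.\ running the argument on $(T^N,m^{\otimes N})$ with the $nN$ functions $t\mapsto f_k(t_j)$, which have the same $s$ and $C$, and letting $N\to\infty$). Everything else — the packing/covering bookkeeping and the single subgaussian tail estimate above — is soft.
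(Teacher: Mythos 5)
Your argument shares the two central ingredients with the paper — a maximal $\d\sqrt n$-separated set and the subgaussian exponential moment with coefficients of $\ell_2$-norm $\|\Phi(x)\|_2$ — but the way you close the count is different, and the difference is exactly where your proof falls short of the stated constant. You bound the measure of a single worst-case ball by a subgaussian tail, union-bound over the centers, and are then forced to restrict the centers to $T_0=\{\sum_k|f_k|^2\ge n/C^2\}$ so that the tail is nontrivial with the right exponent; the residual $\log m(T_0)\ge -\log(C^2+1)$ is the price, as you say. The paper's proof never needs $T_0$: it applies Jensen's inequality to the exponential moment of the whole quantity $\sum_k|f_k|^2$. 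Since $\int\sum_k|f_k|^2\,dm=n$, one has $\exp(\lambda n)\le\int\exp\bigl(\lambda\sum_k|f_k(x)|^2\bigr)\,m(dx)$, and then the pointwise bound $\sum_k|f_k(x)|^2\le\sup_{y\in\cl T}\sum_k\Re\bigl(f_k(x)\ovl{f_k(y)}\bigr)+\d nC$ (valid for \emph{every} $x\in T$ by maximality, using only $|f_k(x)|\le C$) together with the union bound and the subgaussian moment with coefficients $\lambda\ovl{f_k(y)}$ gives $\exp(\lambda n)\le e^{\lambda\d nC}\,|\cl T|\,\exp(\lambda^2 s^2 n C^2/2)$; optimizing in $\lambda$ yields the bound on the nose. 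In effect the paper compares the exponential moment of $\sum|f_k|^2$ against an average ball rather than a worst one, and the averaging does automatically what your set $T_0$ was supposed to do.

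Concerning your proposed repairs: there is no free normalization under the hypotheses $\|f_k\|_2=1$ and $\|f_k\|_\infty\le C$, so you cannot simply arrange $m(T_0)=1$; and the tensorization does not close the gap, because it produces a $\d\sqrt{nN}$-separated subset of $T^N$ with respect to the $nN$ functions $f_k(t_j)$, not a $\d\sqrt n$-separated subset of $T$ with respect to the original $n$ — the separation condition does not factor through products (a point of $T^N$ differing from another in a single coordinate may be $\d\sqrt n$-far but not $\d\sqrt{nN}$-far), so letting $N\to\infty$ gives no statement about $T$. Your tail estimate $m(\{y:\rho(x,y)\le\d\sqrt n\})\le\exp\bigl(-(\|\Phi(x)\|_2-\d\sqrt n)^2/(2s^2)\bigr)$ and the covering bookkeeping are both correct, and the extra additive $O_C(1)$ is harmless for the paper's downstream applications (Corollary \ref{c1}, Theorem \ref{th2}); but as a proof of the theorem with the stated constant the argument has a real gap, and the missing idea is precisely the Jensen step.
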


\begin{proof} Let $\cl T$ be a maximal subset with this property.
Then for any $x\in T$ there is $y\in \cl T$ such that
$(\sum\n_1^n |f_k(x) -f_k(y)|^2 )^{1/2} \le  \d \sqrt n$, and hence
$$\sum\n_1^n |f_k(x)  |^2=
\sum\n_1^n \Re(f_k(x)\ovl{f_k(x)})
\le \sum\n_1^n \Re(f_k(x)\ovl{f_k(y)})+ \d n C.$$
Therefore
for any $\lambda>0$
\begin{align*}\exp{\lambda n}&= \exp{\lambda \sum\n_1^n \int |f_k|^2dm }
\le \int \exp{\lambda \sum\n_1^n |f_k(x)  |^2 } m(dx) \\
&\le
e^{\lambda\d n C} \int \exp{(\lambda\sup\n_{y\in \cl T} \sum\n_1^n \Re(f_k(x)\ovl{f_k(y)}) )} m(dx) \\&\le e^{\lambda\d n C} \sum\n_{y\in \cl T} \int \exp{(\lambda  \sum\n_1^n \Re(f_k(x)\ovl{f_k(y)}) )} 
 m(dx) \\ &\le e^{\lambda\d n C} |\cl T| \exp{(\lambda^2 s^2 nC^2/2)}.
 \end{align*}
Therefore
$$|\cl T|  \ge \exp{n(\lambda    (1 - \d  C) -\lambda^2 s^2 C^2/2) }. $$
Choosing $\lambda= ( s^2 C^2)^{-1} (1 - \d  C)$ (to maximize the last expression) 
we obtain the announced inequality.
\end{proof}
\begin{rem} Note that in the preceding proof 
instead of $sg(\{f_k\})\le s$ it suffices to assume
 $sg(\sum \n_1^n x_k f_k)\le s \sqrt n \sup |x_k|$
 for any $x_k\in \C$.
\end{rem}
In   Theorem \ref{th1}, we have obviously
$|T|\ge |\cl T|$. In particular:
\begin{cor}\label{c1} Let $(f_1,\cdots f_n)$ be subgaussian  characters
on a finite Abelian group $G$ with $sg(\{f_k\})\le s$.
Then  for any $0<\d<1$ $$\log |G|\ge (1-\d)^2/2s^2   .$$
\end{cor}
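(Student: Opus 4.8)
The plan is to read this off directly from Theorem \ref{th1}, applied with $T=G$, $m=m_G$, and $C=1$. The only thing that needs checking is that a family of characters automatically satisfies the normalization hypotheses of that theorem. Indeed, every character $\gamma\colon G\to\T$ takes values in the unit circle, so each $f_k$ satisfies $|f_k(x)|=1$ for all $x\in G$; with respect to the normalized Haar measure $m_G$ this gives both $\|f_k\|_2=1$ and $\|f_k\|_\infty=1$. Hence the hypotheses of Theorem \ref{th1} hold with $C=1$, and the constraint $0<\d<1/C$ there becomes exactly $0<\d<1$, matching the range of $\d$ in the present statement.

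Given this, I would invoke Theorem \ref{th1} to produce, for each such $\d$, a subset $\cl T\subset G$ with
$$\log|\cl T|\ \ge\ \frac{n(1-\d C)^2}{2s^2C^2}\ =\ \frac{n(1-\d)^2}{2s^2}.$$
Since $\cl T\subseteq G$, we have $|G|\ge|\cl T|$, so $\log|G|\ge n(1-\d)^2/(2s^2)$; as $n\ge 1$ this already yields the asserted bound $\log|G|\ge(1-\d)^2/(2s^2)$. (The metric-separation conclusion of Theorem \ref{th1} plays no role here, and the factor $n$ is simply discarded; keeping it gives the sharper inequality $\log|G|\ge n(1-\d)^2/(2s^2)$.)

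There is essentially no obstacle: the corollary is an immediate specialization of Theorem \ref{th1} to the case of characters, the substance being the cardinality estimate already proved there via the maximal-separated-set packing argument. If anything, the step worth a sentence of comment is the conceptual content — a nontrivial $\C$-subgaussian family of characters on $G$ forces $\log|G|$ to be bounded below by a fixed positive constant (as $\d\to 0$, $\log|G|\ge 1/(2s^2)$), so small groups cannot support subgaussian character systems with small $sg$-constant.
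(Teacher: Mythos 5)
Your proof is correct and is essentially the paper's own argument: the paper's proof of Corollary \ref{c1} is precisely the observation that characters satisfy $\|f_k\|_2=\|f_k\|_\infty=1$, so Theorem \ref{th1} applies with $C=1$, together with the trivial inclusion $\cl T\subset G$ and the fact that $n\ge 1$. Your remark that one could keep the factor $n$ for a sharper bound is also consistent with what the paper implicitly discards.
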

\begin{cor} 
In the situation of Theorem \ref{th1}, assume in addition
that  $(f_1,\cdots f_n)$ are 
continuously differentiable functions on $([0,2\pi],dt/2\pi)$.
Then 
$$n^{-1/2}\|(\sum |f_k'|^2)^{1/2}\|_\infty \ge \frac{\d  }{2\pi} \left( \exp{(  n  (1 - \d  C)^2/( 2s^2 C^2)  )}-1\right) .$$
\end{cor}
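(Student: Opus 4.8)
The plan is to combine the cardinality bound from Theorem~\ref{th1} with a standard ``separated points force large derivative'' argument on the circle. First I would invoke Theorem~\ref{th1}: under the stated hypotheses there is a subset $\cl T\subset[0,2\pi]$ with $\log|\cl T|\ge n(1-\d C)^2/(2s^2C^2)$, hence
$$|\cl T|\ge \exp\!\left(\frac{n(1-\d C)^2}{2s^2C^2}\right),$$
such that any two distinct points $x,y\in\cl T$ satisfy $(\sum_1^n|f_k(x)-f_k(y)|^2)^{1/2}>\d\sqrt n$. Note that since $0<\d<1/C$ we have $1-\d C>0$, so the exponent is genuinely positive and $|\cl T|\ge 2$ (in fact much larger for $n$ large).

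Next I would use the differentiability to turn the separation of values into a lower bound on the derivative. Order the points of $\cl T$ cyclically around $[0,2\pi]$ as $x_1<x_2<\cdots<x_{|\cl T|}<x_1+2\pi$. The consecutive gaps sum to $2\pi$, so the smallest gap is at most $2\pi/|\cl T|$; pick a consecutive pair $x_i,x_{i+1}$ with $|x_i-x_{i+1}|\le 2\pi/|\cl T|$. By the fundamental theorem of calculus applied coordinatewise, $f_k(x_{i+1})-f_k(x_i)=\int_{x_i}^{x_{i+1}}f_k'(t)\,dt$, so by Cauchy--Schwarz in the $k$-sum and then Minkowski's integral inequality,
$$\d\sqrt n<\left(\sum_1^n|f_k(x_{i+1})-f_k(x_i)|^2\right)^{1/2}\le \int_{x_i}^{x_{i+1}}\left(\sum_1^n|f_k'(t)|^2\right)^{1/2}dt\le |x_i-x_{i+1}|\cdot\left\|\left(\sum|f_k'|^2\right)^{1/2}\right\|_\infty.$$
Combining with $|x_i-x_{i+1}|\le 2\pi/|\cl T|$ gives $\d\sqrt n<(2\pi/|\cl T|)\,\|(\sum|f_k'|^2)^{1/2}\|_\infty$, i.e.
$$n^{-1/2}\left\|\left(\sum|f_k'|^2\right)^{1/2}\right\|_\infty>\frac{\d}{2\pi}|\cl T|\ge \frac{\d}{2\pi}\exp\!\left(\frac{n(1-\d C)^2}{2s^2C^2}\right).$$

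To land exactly on the stated inequality (which has $\exp(\cdots)-1$ rather than $\exp(\cdots)$ and is non-strict), I would observe that $|\cl T|\ge 2 > e^{n(1-\d C)^2/(2s^2C^2)}-1$ is not quite automatic, so instead I note $|\cl T|\ge \exp(\cdots)\ge \exp(\cdots)-1$ trivially, and the strict inequality above then yields the non-strict claimed bound a fortiori. The only genuinely delicate point is the cyclic pigeonhole step — one must make sure to measure gaps on the circle $\R/2\pi\Z$ rather than on the interval, so that all $|\cl T|$ gaps (including the ``wrap-around'' one) are counted and sum to $2\pi$; everything else is Theorem~\ref{th1} plus routine calculus inequalities. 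I expect no real obstacle beyond bookkeeping; the substance is entirely carried by Theorem~\ref{th1}.
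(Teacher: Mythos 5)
Your overall approach is the same as the paper's: invoke Theorem \ref{th1}, then play the function-space separation off against the mean-value bound coming from the derivative. The one weak spot is the pigeonhole step. You order the points of $\cl T$ cyclically and let the ``wrap-around'' gap count toward the minimum, arriving at a consecutive pair with gap $\le 2\pi/|\cl T|$. But if the smallest gap happens to be the wrap-around one, the fundamental-theorem-of-calculus estimate from $x_{|\cl T|}$ to $x_1+2\pi$ requires the $f_k$'s to be $2\pi$-periodic, and periodicity is not among the hypotheses of the corollary --- it only assumes the $f_k$ are $C^1$ on $[0,2\pi]$. (In the intended application they are characters, hence periodic, but the statement does not say so.)

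The fix is to use the interval pigeonhole rather than the cyclic one: the $|\cl T|$ points create $|\cl T|-1$ ordinary gaps summing to $x_{|\cl T|}-x_1\le 2\pi$, so the smallest ordinary gap is $\le 2\pi/(|\cl T|-1)$. Running your chain of inequalities on that gap gives
$$n^{-1/2}\Big\|\big(\sum |f_k'|^2\big)^{1/2}\Big\|_\infty \ \ge\ \frac{\d}{2\pi}\big(|\cl T|-1\big)\ \ge\ \frac{\d}{2\pi}\Big(\exp\big(n(1-\d C)^2/(2s^2C^2)\big)-1\Big),$$
which is exactly the stated bound, and the $-1$ appears for a reason rather than being absorbed by the crude observation $\exp(\cdot)\ge\exp(\cdot)-1$. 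This is precisely how the paper argues (it phrases it contrapositively: the Lipschitz bound forces pairwise separation $\ge\d/L$ in $[0,2\pi]$, so $|\cl T|\le 1+2\pi L/\d$), so after this small repair your proof coincides with the paper's.
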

\begin{proof}  Let $L=n^{-1/2}\|(\sum\n_1^n |f'_k|^2)^{1/2}\|_\infty$
We have for any $x,y\in [0,2\pi]$
$$(\sum\n_1^n |f_k(x) -f_k(y)|^2 )^{1/2} \le  n^{1/2} L |x-y|.$$
Therefore for any $x\not = y\in \cl T$
$$|x-y|\ge \d/L.$$
But obviously, we cannot find more that $1+2\pi L/\d$ points
in   $[0,2\pi]$ with mutual distance $\ge \d/L$.
Thus we conclude $2\pi L/\d\ge |\cl T|-1$
\end{proof}
\begin{cor}\label{cca} If $\Lambda\subset [1,\cdots,N]$ (or if $\Lambda$
is included in an arithmetic progression of length $N$)
and 
  $sg(\{ e^{int} \mid n\in\Lambda \})\le s$,
then
$$\log(\frac{2\pi N}{\d} +1)\ge |\Lambda| (1-\d)^2/2s^2 .$$
\end{cor}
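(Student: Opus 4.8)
The plan is to apply Theorem~\ref{th1} (equivalently, the Corollary that precedes it) to the characters $f_k(t)=e^{ikt}$, $k\in\Lambda$, on the probability space $([0,2\pi],dt/2\pi)$. These satisfy $\|f_k\|_2=1$ and $\|f_k\|_\infty=1$, so Theorem~\ref{th1} applies with $C=1$, and the hypothesis $sg(\{e^{int}\mid n\in\Lambda\})\le s$ is exactly $sg(\{f_k\})\le s$. Fixing $0<\delta<1$, Theorem~\ref{th1} produces a set $\mathcal T\subset[0,2\pi]$ with
$$\log|\mathcal T|\ge |\Lambda|(1-\delta)^2/(2s^2)$$
such that for all $x\neq y$ in $\mathcal T$
$$\Big(\sum_{k\in\Lambda}|e^{ikx}-e^{iky}|^2\Big)^{1/2}>\delta\sqrt{|\Lambda|}.$$

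Consider first $\Lambda\subset\{1,\dots,N\}$. For every $k\in\Lambda$ and all $x,y$ we have $|e^{ikx}-e^{iky}|=|e^{ik(x-y)}-1|\le|k|\,|x-y|\le N|x-y|$, hence $\big(\sum_{k\in\Lambda}|e^{ikx}-e^{iky}|^2\big)^{1/2}\le N\sqrt{|\Lambda|}\,|x-y|$. Comparing with the separation estimate, any two distinct points of $\mathcal T$ are at mutual distance $>\delta/N$ in $[0,2\pi]$; since at most $1+2\pi N/\delta$ points of an interval of length $2\pi$ can have pairwise distance $>\delta/N$, we get $|\mathcal T|\le 2\pi N/\delta+1$. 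Together with the lower bound on $\log|\mathcal T|$ this yields $\log(2\pi N/\delta+1)\ge|\Lambda|(1-\delta)^2/(2s^2)$. (This is exactly the Corollary preceding Theorem~\ref{th1} with $C=1$, on noting that $n^{-1/2}\|(\sum_{k\in\Lambda}|f_k'|^2)^{1/2}\|_\infty=\big(\tfrac{1}{|\Lambda|}\sum_{k\in\Lambda}k^2\big)^{1/2}\le N$.)

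For the general case, $\Lambda$ contained in an arithmetic progression $\{a+jd:0\le j<N\}$ with $d\ge1$, I would reduce to the previous case. The dilation $t\mapsto dt$ on the circle is measure preserving, so $sg(\{e^{ikt}:k\in dM\})=sg(\{e^{ikt}:k\in M\})$ for every $M\subset\mathbb{Z}$; writing $\Lambda-a=dJ$ with $J\subset\{0,\dots,N-1\}$ one thus gets $sg(J)=sg(\Lambda-a)$, and it remains only to compare $sg(\Lambda-a)$ with $sg(\Lambda)$ and to apply the first case to $J$ (necessarily $0\notin J$, since the hypothesis forces $sg(J)$ to be finite, so $J\subset\{1,\dots,N\}$). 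This last comparison is the delicate point: a translation of the frequency set does not preserve the subgaussian constant on the nose — e.g.\ translating so that $0$ enters the set destroys subgaussianity altogether — so one must use that it is preserved up to a universal factor, for instance via the equivalence (known for subsets of the dual of a compact abelian group) between the subgaussian constant and the genuinely translation-invariant Sidon constant. I expect this translation step to be the main obstacle; an alternative, avoiding Sidon constants, would be to run the counting argument for $\mathcal T$ directly, using the modulated Dirichlet kernel $D_\Lambda(u)=\sum_{k\in\Lambda}e^{iku}$ together with the identity $|D_\Lambda(2\pi m/d)|=|\Lambda|$ ($m\in\mathbb{Z}$) to control how close two points of $\mathcal T$ can be.
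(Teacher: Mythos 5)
Your treatment of the case $\Lambda\subset\{1,\dots,N\}$ is correct and is exactly the paper's argument: apply Theorem~\ref{th1} with $C=1$, then bound $|\mathcal T|$ via the Lipschitz estimate $|e^{ikx}-e^{iky}|\le N|x-y|$, which is precisely the ``$L\le N$'' step in the Corollary preceding~\ref{cca}.

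For the arithmetic-progression case you are actually more careful than the paper, which disposes of it in one line (``can be reduced to $[1,\dots,N]$'') without addressing the point you raise. Your dilation step is correct: $t\mapsto dt$ is measure-preserving on $\T$, so $sg(\{e^{ikt}:k\in dM\})=sg(\{e^{ikt}:k\in M\})$. Your worry about the translation step is legitimate. The quantity $sg(\Lambda)$, defined via $\sup_{\|z\|_2\le 1}\int\exp\Re(\sum z_n e^{int})\,dm$, is \emph{not} manifestly invariant under $\Lambda\mapsto\Lambda+a$, because $\Re\bigl(e^{iat}h(t)\bigr)$ is not a translate or a coefficient-rescaling of $\Re h(t)$; a short moment computation already shows the third moment of $\Re h$ can change under such a frequency shift. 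What is true, as you indicate, is that $sg$ is translation-invariant up to a universal constant: one can pass through the equivalent norm $\sup_{\|z\|_2\le1}\|h_z\|_{\psi_2}$, which only involves $|h_z|$ and hence is unchanged when $h_z$ is multiplied by the unimodular factor $e^{iat}$; Lemma~\ref{42} then converts back to $sg$ at the cost of absolute constants. (The Sidon-constant route you mention gives the same conclusion.) So after translation the hypothesis becomes $sg(J)\le c\,s$ with a universal $c$, and the conclusion of the Corollary holds with $s$ replaced by $cs$, which preserves the intended $\log N\gtrsim|\Lambda|/s^2$ content but not the exact displayed constant. This loss is implicit in the paper's terse reduction as well, so your account is faithful to the paper's proof and, if anything, more explicit about where the precision is dropped. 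One small cleanup: if $a\in\Lambda$ then $0\in J$ and $sg(J)=\infty$; the cheap fix is to discard the single element $a$ from $\Lambda$ (costing only $|\Lambda|-1$ in the bound), rather than inferring $0\notin J$ from finiteness of $sg(J)$, which is slightly circular.
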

\begin{proof} 
The case of an arithmetic progression of length $N$
can be reduced to $[1,\cdots,N]$.
For $f_k= e^{i k(n)t}$ with $1\le k(n)\le N$ we have $L\le N$.
\end{proof}
\begin{rem} If  $\Lambda=\{2^k\mid 1\le 2^k \le N\}$
then $\log N\approx |\Lambda|$; so the logarithmic
growth rate
for the intersection of  a subgaussian set
with any arithmetic progression of length $N$ given by  Corollary \ref{cca} is essentially optimal.
\end{rem}
\begin{rem}\label{ru} Let $\Lambda_1=\{f_n\}$ and 
$\Lambda_2=\{h_n\}$ be two subgaussian families of functions 
on the same probability space. Then the union
 $\Lambda_1 \cup \Lambda_2$ is subgaussian.
 This follows from \eqref{69}. 
 \end{rem}
 
\section{Subgaussian sets of integers, arithmetic characterization}

We will now describe the existing arithmetic characterization
of subgaussian sets of integers and, in the next section,  the main open problem
concerning them.\\
For any finite set $\Lambda\subset \Z$  or more generally
$\Lambda\subset \hat G$ (here  $\hat G$ is any discrete Abelian group
denoted additively), let
$$R(\Lambda)=\{ \xi\in \{-1,0,1\}^\Lambda \mid \sum\n_{n\in \Lambda} \xi_n n=0\}.$$ 
In other words $R(\Lambda)$ is the set of relations with coefficients
in $\{-1,0,1\}$ satisfied by $\Lambda$.
Note that $\Lambda$ is quasi-independent iff $| R(\Lambda) | =1$.
The number  $R(\Lambda)$ is related to Fourier series
by the following obvious identity, valid for any finite subset $A\subset \Lambda$
 \begin{equation}\label{101}R(A)=\int \prod\n_{n\in A} (1+ e^{int}+ e^{-int}) dm(t) .\end{equation} 
 The number $N(k,m,n)$ introduced in the next statement
appears in the theory    of constant weight codes,  
see Remark \ref{code} below.
\begin{lem}\label{gv} Let $k<m<n$ be integers. As usual
let $[n]=\{1,\cdots,n\}$. Let $N(k,m,n)\ge 1$ be the largest
possible cardinal of a family
$\cl T$ of subsets of $[n]$ such that
 \begin{equation}\label{98}\forall   t\in \cl T\quad |t|=m \text{ and } \forall s\not= t\in \cl T\quad |s\cap t|\le k.\end{equation}
Let $A\subset \Z$ be a subset with  $|A|=n$.
If  $R(A)< N(k,m,n)$, then  $A$ contains a quasi-independent subset $B\subset A$ with
$$|B|\ge m-k.$$
\end{lem}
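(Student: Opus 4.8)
The plan is to prove the contrapositive: assuming $A$ has no quasi-independent subset of cardinality $\ge m-k$, deduce that $R(A)\ge N(k,m,n)$. Fix a bijection of $A$ with $[n]$, so that $m$-subsets of $A$ are identified with $m$-subsets of $[n]$. The whole difficulty is concentrated in one combinatorial sub-claim: \emph{if a finite set $F\subset\Z\setminus\{0\}$ has no quasi-independent subset of cardinality $\ge|F|-k$, then $F$ supports a $\{-1,0,1\}$-relation $\sum_{j\in F}\xi_j j=0$ whose support has cardinality $\ge k+1$.} Proving this sub-claim is the step I expect to be the main obstacle; everything else is a short counting argument.

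Granting the sub-claim, here is how I would finish. Every $m$-subset $t\subset A$ satisfies $|t|=m>m-k$, so it has no quasi-independent subset of cardinality $\ge m-k$; applying the sub-claim to $F=t$ yields a relation $\xi_t$ with $\mathrm{supp}(\xi_t)\subset t$ and $|\mathrm{supp}(\xi_t)|\ge k+1$. The assignment $t\mapsto\xi_t$ sends the $\binom nm$ many $m$-subsets into the set of nonzero $\{-1,0,1\}$-relations of $A$, which has exactly $R(A)-1$ elements; moreover a fixed relation $\xi$ whose support has size $s$ is the image of at most $\binom{n-s}{m-s}$ subsets $t$, and $\binom{n-s}{m-s}\le\binom{n-k-1}{m-k-1}$ since $s\ge k+1$. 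Hence $\binom nm\le(R(A)-1)\binom{n-k-1}{m-k-1}$. On the other hand, in any family $\cl T$ of $m$-subsets of $[n]$ with $|s\cap t|\le k$ for $s\ne t$, every $(k+1)$-subset of $[n]$ lies in at most one member of $\cl T$, so $|\cl T|\binom m{k+1}\le\binom n{k+1}$ and therefore $N(k,m,n)\le\binom n{k+1}/\binom m{k+1}$. Using the identity $\binom nm/\binom{n-k-1}{m-k-1}=\binom n{k+1}/\binom m{k+1}$, these two facts give $R(A)-1\ge N(k,m,n)$, which completes the contrapositive.

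For the sub-claim I would start from a quasi-independent subset $B\subset F$ of maximal cardinality, so that $|F\setminus B|\ge k+1$ by hypothesis. Maximality forces, for each $a\in F\setminus B$, the set $B\cup\{a\}$ to carry a nontrivial $\{-1,0,1\}$-relation; since $B$ is quasi-independent this relation involves $a$ with coefficient $\pm1$, so after a sign change $a=\sum_{b\in B}\epsilon^{(a)}_b b$ for some $\epsilon^{(a)}\in\{-1,0,1\}^B\setminus\{0\}$, i.e. $a$ carries the relation $v_a=\delta_a-\epsilon^{(a)}\in\{-1,0,1\}^F$. For any signs $\sigma_a=\pm1$ the integer vector $\sum_{a\in F\setminus B}\sigma_a v_a$ is again a relation whose coefficient at each $a\in F\setminus B$ equals $\sigma_a\ne0$, so its support automatically contains the $\ge k+1$ points of $F\setminus B$; the only thing left to arrange is that the coefficients of the combination at the points of $B$, namely $-\sum_a\sigma_a\epsilon^{(a)}_b$, stay in $\{-1,0,1\}$. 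I would handle this by first replacing each $v_a$ by a minimal relation (making the sets $\mathrm{supp}(\epsilon^{(a)})\subset B$ as small as possible) and then combining the $v_a$'s greedily: the running combination stays in $\{-1,0,1\}$ as long as the $B$-supports it meets are pairwise disjoint, and when they overlap one appeals to the rigidity of short $\{-1,0,1\}$-relations among genuine integers — two minimal relations cannot repeatedly share a common element or summand without forcing a strictly longer relation to appear — to conclude, by an induction on $k$, that either some single $v_a$ already has support $\ge k+1$ or a valid relation of support $\ge k+1$ can nevertheless be assembled. Turning this rigidity into a clean argument is the part I expect to demand the most care.
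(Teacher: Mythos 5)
Your outer argument — proving the contrapositive via a double count of $m$-subsets against nonzero relations, together with the Johnson-type bound $N(k,m,n)\le\binom{n}{k+1}/\binom{m}{k+1}$ and the binomial identity — is correct and is genuinely different from the paper's. The paper instead fixes one family $\cl T$ realizing $N(k,m,n)$, assigns to each $t\in\cl T$ an inclusion-maximal $r_t\subset t$ supporting a $\pm1$-relation, and observes that the intersection bound $|s\cap t|\le k$ makes $t\mapsto r_t$ injective whenever every $|r_t|>k$; since $|\cl T|>|R(A)|$ would then be violated, some $|r_t|\le k$, and $B=t\setminus r_t$ is the desired quasi-independent set. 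Your counting wrapper buys nothing extra here (both routes use only $|R(A)|<N(k,m,n)$), but it does show the lemma follows from a local statement about each $m$-subset rather than from the structure of an optimal packing $\cl T$, which is a worthwhile observation.

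However, the proof has a genuine gap exactly where you predicted: the sub-claim is not proved, and the route you sketch for it (start from a maximal quasi-independent $B\subset F$, produce for each $a\in F\setminus B$ a relation $v_a=\delta_a-\epsilon^{(a)}$, and assemble a $\pm1$-combination $\sum_a\sigma_a v_a$ that lands in $\{-1,0,1\}^F$) does not work as stated. Already with $B=\{1,2,4\}$ and $F\setminus B=\{3,5,6\}$ the three minimal $v_a$'s have pairwise overlapping $B$-supports $\{1,2\},\{1,4\},\{2,4\}$, and no choice of signs $\sigma_3,\sigma_5,\sigma_6$ keeps all $B$-coordinates of $\sigma_3 v_3+\sigma_5 v_5+\sigma_6 v_6$ in $\{-1,0,1\}$; one is forced to drop some $a$'s, and the ``rigidity'' you invoke is left entirely to the reader. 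The fix is to dualize your starting point. Instead of a maximal quasi-independent subset, take $r\subset F$ inclusion-maximal among subsets of $F$ supporting a relation with all coefficients in $\{-1,1\}$ (the empty set qualifies vacuously). Then $F\setminus r$ is automatically quasi-independent: if some nonempty $S\subset F\setminus r$ supported a $\pm1$-relation, then $r\cup S$ (disjoint union) would too, contradicting maximality. Under your hypothesis $|F\setminus r|<|F|-k$, hence $|r|\ge k+1$, and the $\pm1$-relation carried by $r$ is the $\{-1,0,1\}$-relation of support $\ge k+1$ you want. This is the key observation in the paper; with it in place, both your counting finish and the paper's injectivity finish are valid.
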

\begin{proof} 
Since $A$ and $[n]$ are in bijection, we may assume
that $\cl T$ is a family
 of subsets of $A$.
For any $t\in \cl T$ consider a maximal subset $r_t\subset t$
that supports a relation, i.e. for $r=r_t$  there exists   $ (\xi_n)\in \{-1,1\}^r$
such that $\sum \xi_n n=0$ and 
there is no larger subset of $t$ satisfying this. We claim
that for some $t$ we must have $|r_t|\le k$.
Otherwise, $|r_t|> k$ for all $t$. But since $|s\cap t|\le k$
for all $s\not= t$, the mapping $t\mapsto r_t$ must be one to one.
To each $r_t$  we can associate (by adding several zeros) a relation
$\xi^t \in  \{-1,0,1\}^A$ such that $\sum \xi^t_n n=0$ 
with support $r_t$. Obviously $t\mapsto \xi^t$ is also one to one.
Thus we obtain $|\cl T|\le |R(A)|$, contradicting our assumption
that  $R(A)< N(k,m,n)$. This proves our claim.
Now choose $t$ so that $|r_t|\le k$.
Let $B=t\setminus r_t$. We have $|B|\ge m-k$
and the maximality of $r_t\subset t$ implies that there cannot be any
nontrivial
relation with coefficients $\pm 1$ supported inside $B$. In other words
$B$ is quasi-independent.
\end{proof}

\begin{lem}\label{gv2}
Assuming that $3n/8, n/2$ are integers, we have
 \begin{equation}\label{990} N(3n/8, n/2, n) \ge c'  \exp{(n/17)},\end{equation}
 where $c'>0$ is    independent of $n$.
\end{lem}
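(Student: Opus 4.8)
The plan is to derive \eqref{990} from the Gilbert--Varshamov covering bound for constant weight codes, together with elementary estimates of binomial coefficients. Set $m=n/2$ and $k=3n/8$, so that also $n-m=n/2$. First I would take a family $\cl T$ of $m$-subsets of $[n]$ satisfying \eqref{98} which is \emph{maximal} for inclusion. Maximality forces $\cl T$ to be a covering: for every $m$-subset $s\subset[n]$ there is $t\in\cl T$ with $|s\cap t|>k$ (if $s\in\cl T$, take $t=s$, using $m>k$; otherwise adjoining $s$ to $\cl T$ would violate \eqref{98}, and since $\cl T$ itself satisfies \eqref{98} the offending pair must involve $s$). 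For a fixed $t$, the number of $m$-subsets $s\subset[n]$ with $|s\cap t|>k$ is exactly
\[
M:=\sum\n_{j=k+1}^{m}\binom{m}{j}\binom{n-m}{m-j}
\]
(choose the $j$ common elements inside $t$ and the remaining $m-j$ outside $t$). Hence $|\cl T|\cdot M\ge\binom{n}{m}$, so $N(k,m,n)\ge\binom{n}{m}/M$; it remains to estimate the right-hand side for $k=3n/8$, $m=n/2$.

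Since $n-m=n/2$ and $\binom{n/2}{n/2-j}=\binom{n/2}{j}$, we have $M=\sum\n_{j=3n/8+1}^{n/2}\binom{n/2}{j}^2$, a sum of $n/8$ terms. The function $j\mapsto\binom{n/2}{j}$ is nonincreasing for $j\ge n/4$, so each summand is at most $\binom{n/2}{3n/8}^2$, whence $M\le (n/8)\,\binom{n/2}{3n/8}^2$. For the numerator I would use $2^n=\sum_j\binom{n}{j}\le(n+1)\binom{n}{n/2}$, i.e.\ $\binom{n}{n/2}\ge 2^n/(n+1)$. Finally, writing $N=n/2$ and applying the standard entropy bound $\binom{N}{pN}\le 2^{NH(p)}$ (valid here since $3n/8,\ n/2$ are integers) for the binary entropy $H(p)=-p\log_2p-(1-p)\log_2(1-p)$ at $p=3/4$, together with the crude rounding $H(3/4)=\tfrac34\log_2\tfrac43+\tfrac12<0.82$, one gets $\binom{n/2}{3n/8}\le 2^{0.41\,n}$ and hence $\binom{n/2}{3n/8}^2\le 2^{0.82\,n}$. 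Putting the pieces together,
\[
N(3n/8,n/2,n)\ \ge\ \frac{2^n/(n+1)}{(n/8)\,\binom{n/2}{3n/8}^2}\ \ge\ \frac{8}{n(n+1)}\,2^{0.18\,n}.
\]

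Since $0.18\ln 2>1/17$, the exponential $2^{0.18n}=e^{(0.18\ln 2)n}$ dominates the polynomial $n(n+1)$, so there is a constant $c'>0$ with $N(3n/8,n/2,n)\ge c'e^{n/17}$ for all admissible $n$ (namely multiples of $8$): one may take $c'$ to be the infimum over such $n$ of $\tfrac{8}{n(n+1)}e^{(0.18\ln 2-1/17)n}$, which is attained at a finite $n$ and is strictly positive. This is \eqref{990}.

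As for difficulty: the conceptual content is just the Gilbert--Varshamov covering argument plus Stirling-type bounds, so there is essentially no obstacle; the only point needing a bit of care is the numerical bookkeeping that produces the exponent $1/17$. But the genuine exponential rate obtained above, $1-H(3/4)\approx 0.189$ in base $2$, comfortably exceeds $1/(17\ln 2)\approx 0.085$, leaving ample room to absorb the polynomial corrections and the rough rounding $H(3/4)<0.82$; so no sharp estimation is required anywhere. (If one prefers, the even cruder bound $M\le 2^{n/2}\binom{n/2}{3n/8}$ already suffices, at the cost of a smaller margin over $1/17$.)
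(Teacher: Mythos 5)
Your proof is correct, and the top-level strategy (a maximal family is a covering, hence $N(k,m,n)\ge\binom{n}{m}/M$ with $M=\sum_{j>k}\binom{m}{j}\binom{n-m}{m-j}$) is the same Gilbert--Varshamov argument the paper uses. Where you diverge is in the estimation of $M$ and $\binom{n}{m}$: you bound the $n/8$ terms of $M$ by the largest one and invoke the binary entropy inequality $\binom{N}{pN}\le 2^{NH(p)}$ at $p=3/4$, while the paper deliberately recasts $Q$-probabilities of overlap events as tail probabilities of partial sums of i.i.d.\ signs and then applies the subgaussian tail bound \eqref{102} (via Theorem \ref{chp6athmH5}), explicitly remarking that ``Although we could use combinatorics, we prefer to use probability.'' Your version is the combinatorial alternative the paper alludes to; it is perhaps more self-contained for a reader unfamiliar with subgaussian tails, but the paper's choice is thematically apt in a survey on subgaussian sequences and reuses the machinery already built. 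Both routes give an exponential rate (the paper gets $e^{n/16}/\sqrt n$ up to a constant; you get $2^{(1-H(3/4))n}$ up to a polynomial, about $e^{0.13\,n}$) comfortably above $e^{n/17}$, so the final step of absorbing the polynomial into the constant $c'$ is the same in both. One small numerical point you handled correctly but that deserves a glance: the sum defining $M$ starts at $j=k+1=3n/8+1>n/4$, which is past the mode of $\binom{n/2}{\cdot}$, so the monotonicity you invoke is indeed available, and $H(3/4)\approx 0.8113<0.82$ leaves the claimed room.
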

\begin{proof} Let $Q$ be the uniform probability over 
all the $2^n$ subsets of $[n]$.
Let $\cl T$ be a maximal family of subsets satisfying \eqref{98}.
Then for any $A\subset [n]$ with $|A|=m$ and $A\not\in \cl T$ 
there is $t\in \cl T$ such that
$|A\cap t|>k$ (otherwise we could add $A$ to $\cl T$ contradicting its maximality). 
Actually, if  $A\in \cl T$,  then $t=A$ trivially satisfies $|A\cap t|=m>k$.
Therefore $\{A\mid |A|=m\}\subset \cup_{t\in \cl T} \{A\mid |A|=m, |A\cap t|>k\}$ and hence
 \begin{equation}\label{99} Q(\{A\mid |A|=m\}) \le |\cl T|  \sup\n_{t\in \cl T} Q(\{ A\mid |A|=m, |A\cap t|>k\}).\end{equation}
Of course, whenever $|t|=m$, the numbers $Q(\{ A\mid |A|=m, |A\cap t|>k\})$
are all the same and hence
$Q(\{ A\mid |A|=m, |A\cap t|>k\})=Q(\{ A\mid |A|=m, |A\cap [m]|>k\})$. By an easy counting
argument,
 the cardinal of $\{ A\mid |A|=m, |A\cap [m]|>k\}$ is equal
 to $\sum\n_{k<j\le m} {m  \choose j}  {n-m  \choose m-j}$.
 Although we could use combinatorics, we prefer to use probability
 to estimate this number. 
 Let $(\vp_j)$ be in $\{-1,1\}^n$ and let $P$ be the uniform probability
 on $\{-1,1\}^n$. We have a $1-1$ 
 equivalence 
 between $P$ and $Q$ using the correspondence
 $\vp=(\vp_j) \mapsto A=\{j\mid \vp_j=1\}$. Note $|A|=  \sum (\vp_j  +1)/2$.
 Let $S_n= \sum\n_1^n \vp_j$ so that  $|A|=(S_n+n)/2$
 and $ |A\cap [m]|=(S_m+m)/2$.
 Thus \eqref{99} implies
 \begin{equation}\label{99'} P(\{ S_n=2m-n\})\le   |\cl T|   P(\{ (S_m+m)/2>k\})=  |\cl T|  P(\{ S_m>2k-m\}).
 \end{equation}
 By a well known bound
 there is a positive number $c_0>0$ (in fact $c_0=1/\sqrt 2$)
 so that assuming $n$ even ${n \choose n/2}\ge c_02^n/\sqrt n$.
 Thus  assuming $n=2m$ and $2k-m=m/2 $ we find by \eqref{99'}
 (using  \eqref{vp} and \eqref{102})
  $$c_0/\sqrt n \le   |\cl T|   \exp{-((2k-m)^2/2m)}\le  |\cl T|   \exp{-m/8}\le  |\cl T|   \exp{-n/16},$$
 and we obtain $N(3n/8, n/2, n) \ge (c_0/\sqrt n)  \exp{(n/16)}$, from which \eqref{990}  follows a fortiori.
\end{proof}
\begin{rem}\label{code}
The number $N(k,m,n)$ introduced in Lemma \ref{gv}
appears in the theory    of constant weight codes
where it is denoted by $A(n, 2(m-k) ,m)$.
A code word is a sequence of $0$'s and $1$'s,  its length   is the number
of $0$'s and $1$'s, and its weight is the number of $1$'s.
The  Hamming distance between any two such words
is the number of places where they differ.
 Thus $N(k,m,n)$ is equal to the maximal number of 
 code words of length $n$ with weight $m$
 and mutual Hamming distance at least $2(m-k)$.
 The simple packing argument used for Lemma \ref{gv2} is a variant
 of a famous estimate known in Coding Theory as the Gilbert-Varshamov bound,
 adapted to the weight $m$ case. It is known (this seems to be in the coding folklore)
 that if $m=[a n]$, $k=[bn]$ with $0<b<a^2$ and $0<a\le 1/2$
 then  (assuming $n\to \infty$)  we have an exponential lower bound $N(k,m,n)\ge 2^{\d n}$ for some $\d=\d(a,b)>0$. The proof of Lemma \ref{gv2} can be modified to yield that.
 It seems however that no sharp formula is known for $\d=\d(a,b)>0$.
 See
\cite[chap. 17, \S 2]{MS} for more   on this vast subject.
I am grateful to Noga Alon for  the information and  references used in the present remark.
\end{rem}
\begin{lem}\label{l105}  Assume again that $3n/8, n/2$ are integers.
 If $|A|=n$ and $|R(A)|< c'  \exp{(n/17)}$
 then $\exists B\subset A$ quasi-independent with $|B|\ge n/8$.
\end{lem}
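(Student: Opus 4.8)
The plan is simply to combine the two preceding lemmas, so this is really a formal corollary rather than a new argument. First I would apply Lemma~\ref{gv} with the choice $k = 3n/8$ and $m = n/2$. Since $3n/8$ and $n/2$ are assumed to be integers and $3n/8 < n/2 < n$, the hypotheses $k < m < n$ of that lemma are satisfied. Lemma~\ref{gv} then asserts that if $|A| = n$ and $|R(A)| < N(3n/8, n/2, n)$, then $A$ contains a quasi-independent subset $B$ with
$$|B| \ge m - k = \frac{n}{2} - \frac{3n}{8} = \frac{n}{8}.$$

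It remains only to bound $N(3n/8, n/2, n)$ from below, and this is exactly the content of Lemma~\ref{gv2}, which gives $N(3n/8, n/2, n) \ge c' \exp(n/17)$ for the constant $c' > 0$ appearing there. Consequently, the hypothesis $|R(A)| < c' \exp(n/17)$ implies $|R(A)| < N(3n/8, n/2, n)$, and the desired conclusion follows at once from the previous paragraph.

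There is no genuine obstacle here: the lemma packages together the Gilbert--Varshamov-type packing estimate of Lemma~\ref{gv2} and the relation-extraction argument of Lemma~\ref{gv}. The only things worth a quick sanity check are the arithmetic identity $m - k = n/8$ and the integrality conditions required to invoke Lemma~\ref{gv}, both of which are immediate from the standing assumption that $3n/8$ and $n/2$ are integers. (One should also keep in mind that, as in Lemma~\ref{gv2}, the constant $c'$ is the same numerical constant throughout, so no implicit dependence on $n$ is introduced.)
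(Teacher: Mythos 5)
Your proof is correct and is exactly the paper's intended argument: the paper's own proof is literally the one-line remark that the lemma is immediate from the preceding two lemmas, and you have simply spelled out the substitution $k=3n/8$, $m=n/2$ (giving $m-k=n/8$) together with the lower bound from Lemma~\ref{gv2}.
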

\begin{proof} This is  immediate from the preceding two Lemmas.
\end{proof}
\begin{thm}\label{erd} Let $  \Lambda\subset \Z\setminus\{0\}$  or more generally
$ \Lambda\subset \hat G \setminus\{0\}$ ($\hat G$   any discrete Abelian group).
The following are equivalent:
\begin{itemize}
\item[(i)]  $\Lambda$ is subgaussian. 
\item[(ii)]  There is $\d>0$ such that   any
 finite   $A\subset \Lambda$ contains a quasi-independent subset
  $B\subset A$ with $|B|\ge \d |A|$.
 \item[(iii)] There is $\d>0$ and $s>0$ such that   any
 finite   $A\subset \Lambda$ contains a (subgaussian) subset
  $B\subset A$ with $|B|\ge \d |A|$ and  $sg(B)\le s$.
   \item[(iii)'] For any $0<\d<1$ there is $s>0$ such that   any
 finite   $A\subset \Lambda$ contains a (subgaussian) subset
  $B\subset A$ with $|B|\ge \d |A|$ and  $sg(B)\le s$.
   \item[(iv)] There is a constant $C$ such that
   for any finite subset $A\subset\Lambda$ we have
   $$sg(\Re( \sum\n_{n\in A} e^{int}))\le C|A|^{1/2}.$$
\end{itemize}
\end{thm}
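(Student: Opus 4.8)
The plan is to close a cycle of implications in which all but two steps are routine, isolating the two substantive points.

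\textit{Routine implications.} For (i) $\Rightarrow$ (iii), (i) $\Rightarrow$ (iii)' and (i) $\Rightarrow$ (iv) one takes $B=A$: a finite $A\subset\Lambda$ is itself subgaussian with $sg(A)\le sg(\Lambda)$, and feeding all coefficients equal to $1$ into \eqref{40b'} gives $sg(\Re\sum_{n\in A}\gamma_n)\le sg(\Lambda)\,|A|^{1/2}$, i.e.\ (iv) with $C=sg(\Lambda)$. For (ii) $\Rightarrow$ (iii) one uses that a quasi-independent set is subgaussian with constant $\le 2$ (Proposition \ref{pqi}, and Remark \ref{ru2} for a general discrete group), so $s=2$ works; and (iii)' $\Rightarrow$ (iii) is immediate. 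It remains to return to (i); the bridge is (iv) $\Rightarrow$ (ii) and (iii) $\Rightarrow$ (ii), followed by the one genuinely hard implication (ii) $\Rightarrow$ (i).

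\textit{A $\psi_2$-bound forces a proportional quasi-independent subset.} The key lemma is: if $A$ is finite with $sg(\Re\sum_{n\in A}\gamma_n)\le C|A|^{1/2}$, then $A$ contains a quasi-independent subset of size $\ge\delta(C)\,|A|$, where $\delta(C)>0$ depends only on $C$. Granting it, (iv) $\Rightarrow$ (ii) is immediate, and (iii) $\Rightarrow$ (ii) follows by applying the lemma to the subset $B$ furnished by (iii), which meets the hypothesis with $C=s$. To prove the lemma, set $n=|A|$, fix $\varepsilon=\varepsilon(C)\in(0,1/2]$ (to be chosen), let $A'\subset A$ be a uniformly random subset of size $n'\approx\varepsilon n$, and observe that for a nonzero relation $\xi\in R(A)$ one has $\P(\mathrm{supp}\,\xi\subset A')\le\varepsilon^{|\mathrm{supp}\,\xi|}$, whence
$$\E\,R(A')\;=\;1+\sum\nolimits_{\xi\in R(A)\setminus\{0\}}\P(\mathrm{supp}\,\xi\subset A')\;\le\;1+\sum\nolimits_{\xi\in R(A)\setminus\{0\}}\varepsilon^{|\mathrm{supp}\,\xi|}\;=\;\int\prod\nolimits_{n\in A}\bigl(1+2\varepsilon\Re\gamma_n\bigr)\,dm ,$$
the last equality being the parametrized form of \eqref{101}. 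Since $\varepsilon\le 1/2$, each factor $1+2\varepsilon\Re\gamma_n$ is $\ge 0$, so $\prod(1+2\varepsilon\Re\gamma_n)\le\exp\bigl(2\varepsilon\Re\sum_{n\in A}\gamma_n\bigr)$, and the hypothesis gives $\E\,R(A')\le\exp(2\varepsilon^2C^2n)$. If $\varepsilon=\varepsilon(C)$ is chosen with $2\varepsilon^2C^2<\varepsilon/34$, then (since $n'\approx\varepsilon n$) this is $<c'\exp(n'/17)$ once $n$ exceeds a threshold $n_0(C)$; hence some realization of $A'$ satisfies the hypothesis of Lemma \ref{l105} (after trimming $n'$ to a multiple of $8$), which produces inside it a quasi-independent set of size $\ge n'/8\ge(\varepsilon/16)\,n$. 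For $n\le n_0(C)$ a single element suffices, so after shrinking $\delta(C)$ the lemma holds in all cases. (The purely combinatorial Lemmas \ref{gv}--\ref{l105} carry over verbatim from $\Z$ to any discrete abelian group.)

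\textit{Proportional quasi-independent subsets force subgaussianity.} This is (ii) $\Rightarrow$ (i) and the main obstacle. From (ii), any finite $A\subset\Lambda$ splits as $A=B_1\sqcup\cdots\sqcup B_N$ with each $B_j$ quasi-independent and $N=O(\delta^{-1}\log|A|)$; by Remark \ref{ru} this already shows $A$ is subgaussian, but only with a constant growing like $\sqrt N$, since iterating \eqref{69} over the $N$ blocks gives $sg(\Re\sum_{n\in A}x_n\gamma_n)\le 2\sqrt N\,(\sum_n|x_n|^2)^{1/2}$. The real task is to erase this parasitic factor $\sqrt N=O(\sqrt{\log|A|})$: one must show that the disjointly supported blocks $Y_j=\Re\sum_{n\in B_j}x_n\gamma_n$ combine as if they were \emph{independent}, so that $sg(\sum_j Y_j)\le c\,(\sum_j sg(Y_j)^2)^{1/2}$ with $c$ absolute, rather than merely by the triangle inequality in $\psi_2$. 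The route I would take is to pass, via Proposition \ref{p8}, to the independent copies $\gamma_{n,k}$ on the product space $G^{\N}$, where the blocks genuinely decouple, and to exploit the domination of $\{\gamma_{n,k}\}$ by a Gaussian sequence through Proposition \ref{r42}, combined with a random-selection argument replacing the crude union bound; this is precisely the delicate point, and it is where essentially all the difficulty of the theorem is concentrated. Once (i) is in hand, (i) $\Rightarrow$ (iii)' upgrades (ii) $\Rightarrow$ (iii) to (ii) $\Rightarrow$ (iii)', and the directed cycle through all five statements closes, so they are equivalent.
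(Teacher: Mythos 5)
Your key lemma and its proof are fine: extracting a uniformly random subset of fixed size $n'\approx\varepsilon n$, bounding $\P(\mathrm{supp}\,\xi\subset A')\le\varepsilon^{|\mathrm{supp}\,\xi|}$, and recognizing $\E\,R(A')$ as the Riesz-product integral is essentially the argument the paper gives for (i) $\Rightarrow$ (ii), with a cosmetic change (fixed-size sampling rather than i.i.d.\ Bernoulli selectors $\delta_k$ followed by a concentration step for $|A(\omega)|$). Your observation that (iii) $\Rightarrow$ (ii) follows from the same lemma applied to $B$ is correct, and your list of routine implications is correct.

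The gap is in how you propose to close the cycle. You declare (ii) $\Rightarrow$ (i) to be the hard step, but you do not prove it; you only gesture at a route via $\gamma_{n,k}$ and Propositions \ref{p8}, \ref{r42}. This sketch does not work as stated: passing to the product $G^{\N}$ does nothing to decouple the blocks $B_1,\dots,B_N$ inside a single copy, so the parasitic $\sqrt{N}$ from iterating \eqref{69} is not erased by that device. The actual proofs (the paper defers to \cite{Pi2} and \cite{Bo}) go through the Sidon property, either via Dudley--Fernique metric entropy plus an interpolation argument (see Remark \ref{ia}) or via Bourgain's Riesz-product approach, followed by Sidon $\Rightarrow$ subgaussian (Theorem \ref{rud}); neither resembles your decoupling sketch. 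Moreover, by routing everything through the unproved (ii) $\Rightarrow$ (i), you lose what the paper establishes unconditionally: the paper's iteration (iii) $\Rightarrow$ (iii)' (doubling $B$ at cost a factor $2$ in $sg$) and the bootstrap (iii)' $\Rightarrow$ (iv) (the inequality $C(n)^2 \le 2s^2 + 2(1-\delta)C(n)^2$) close a self-contained cycle $(\mathrm{iv})\Rightarrow(\mathrm{ii})\Rightarrow(\mathrm{iii})\Rightarrow(\mathrm{iii})'\Rightarrow(\mathrm{iv})$ that does not depend on the hard return to (i). In your plan, the equivalence of (ii), (iii), (iii)' and (iv) among themselves is only obtained \emph{after} the hard step, whereas the paper proves it outright. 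You should either restore those two iteration arguments, or prove (ii) $\Rightarrow$ (i) (at minimum, cite \cite{Pi2} or \cite{Bo} as the paper does rather than offer a speculative new route).
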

\begin{proof} 
Assume (i).
Then there is $C$ such that for any finite subset
$A\subset \Lambda$ with $|A|=n$ the function
$S_A(t) =\Re( \sum\n_{k\in A} e^{ikt})=\sum\n_{k\in A}  \cos(kt) $ is subgaussian with
$sg(S_A)\le C|A|^{1/2}$. Then for any $0<\d<1$ we have
$$\int \prod\n_{{k}\in A}  (1+ \d \cos({k}t)) dm(t) \le \int   \exp{(\d S_A)} dm(t)\le \exp {(C\d^2 n/2)}.$$
Let $(\d_{k})_{{k}\in A}$ be an i.i.d. family of $\{0,1\}$-valued
variables with $\P(\{\d_{k}=1\})=\d/2$.
Let $A(\omega)=\{{k}\mid \d_{k}(\omega)=1\}$.
Then $\prod\n_{{k}\in A(\omega)}  (1+ e^{i{k}t}+ e^{-i{k}t})=\prod\n_{{k}\in A }  (1+\d_{k}(\omega)( e^{i{k}t}+ e^{-i{k}t}))$ and hence
$$\E \int \prod\n_{{k}\in A(\omega)}  (1+ e^{i{k}t}+ e^{-i{k}t}) dm(t)=\int \prod\n_{{k}\in A}  (1+ \d  \cos({k}t))\le \exp {(C\d^2 n/2)}.$$
In other words
 \begin{equation}\label{104}\E |R(A(\omega))| \le \exp {(C\d^2 n/2)}.\end{equation}
But we also have 
$|A(\omega)| -\d n/2 = \sum\n_{{k}\in A } ( \d_{k} -\E\d_{k} )$, and hence by well known bounds
for a sum of independent mean $0$ variables with values in $[-1,1]$
(indeed a very particular case of Theorem \ref{chp6athmH5}  
with $d_n= \d_n -\E\d_n   $ tells us that $sg(\sum\n_{{k}\in A } ( \d_{k} -\E\d_{k} )) \le n^{1/2}$ then we may use  \eqref{102})
$$\forall c>0\quad \P(\{ |A(\omega)| -\d n/2  < -c \}) =\P(\{ \sum\n_{n\in A }  (\d_n -\E\d_n) < -c \}) \le \exp {(-c^2/2n)}.$$
Therefore
$\P(\{ |A(\omega)| -\d n/2   <- \d n/4  \}) \le \exp {(-\d^2 n/32)}$, and hence
$$\P(\{ |A(\omega)| \ge \d n/4\}) \ge 1- \exp {(-\d^2 n/32)}.$$
By \eqref{104}
$$\P(\{ |R(A(\omega))| \le 2\exp {(C\d^2 n/2)} \}) \ge 1/2.$$
Assume  \begin{equation}\label{nob}
1/2 + 1- \exp {(-\d^2 n/32)} >1.\end{equation}
Then for some $\omega$ we have both $ |A(\omega)| \ge \d n/4$
and $|R(A(\omega))| \le 2\exp {(C\d^2 n/2)}$, and hence
$$|R(A(\omega))| \le 2\exp {(2C\d|A(\omega) | )}.$$
We now choose $\d=\d_C$ so that $2C \d_C=1/18<1/17$.
Then $|R(A(\omega))| \le 2 \exp {(|A(\omega) |/18 )}.$
Note $ |A(\omega)| \ge \d_C n/4$. Therefore there is
clearly a large enough number $N$ (depending only on $C$)
such that for all $n\ge N$ both \eqref{nob}   and  (for the $\omega$ we 
select)
$ 2 \exp {(|A(\omega) |/18 )} < c' \exp {(|A(\omega) |/17 )}$ hold,
and hence 
$$|R(A(\omega))|  < c' \exp {(|A(\omega) |/17 )}.$$
By Lemma \ref{l105} this implies that $A(\omega)$ contains
a quasi-independent subset $B$ with $|B|\ge |A(\omega)|/8\ge \d_C n/32$.
 (We ignore
the requirement that $3|A(\omega)|/8, |A(\omega)|/2$ be integers, which is easy to bypass by replacing $A(\omega)$ by a maximal subset with cardinal dividable by 8.)
This proves (ii) since the sets with $n\le N$ are easily
treated by adjusting the number $\d$ appearing in (ii) small enough.\\
(ii) $\Rightarrow$ (iii) follows from Proposition \ref{pqi}.\\
 Assume  (iii). Let $|A|=n$. Let $B\subset A$ be given by (iii), i.e.
$sg(B)\le s $  and $|B|\ge \d n$. We may apply (iii) again to $A\setminus B$.
This gives us $B_1\subset A\setminus B$   with $sg(B_1)\le s $ and
$|B_1|\ge \d |A\setminus B|$. Now let $B'= B\cup B_1$.
We have $|B'|\ge (\d + \d(1-\d) )n $
and, by Remark \ref{ru} and \eqref{69},   also $sg(B')\le 2s$. Thus we have improved $\d$ from 
the value $\d$ to $\d_1=\d + \d(1-\d)$. Iterating this process, we easily
obtain (iii)'
\\
 Assume (iii)'. Let $C(n) $ be the smallest constant $C$ such that
 $sg(S_A)\le C\sqrt{|A|}$  for all subsets $A\subset\Lambda$ with $\le n$ elements.
Let $|A|\le n$.
We fix $0<\d<1$ suitably close to $1$ (to be determined).
 Let $B\subset A$ be given by (iii)', so that
$sg(S_B)\le s\sqrt{|B|} $ and $|B|\ge \d |A|$. 
We have obviously by definition of $C(n)$
$sg(S_{A\setminus B}) \le C(n)  \sqrt {n(1-\d)}$.
By \eqref{69}, $$sg(S_{A})^2\le  2( s^2{|B|} +  C(n) ^2 {|A|(1-\d)} )
\le 2 s^2 |A| + 2 (1-\d)  C(n)^2 |A|,$$
which implies
$$C(n)^2 \le 2 s^2  + 2 (1-\d)  C(n)^2  .$$
Thus if $\d$ is chosen so that $\d_1=2 (1-\d)<1$
we conclude
$$C(n)^2 \le  (1-\d_1)^{-1} 2 s^2,$$
which shows that $C(n)$ is bounded, so that (iv) holds.

The proof that (iv) $\Rightarrow$ (i) is more delicate. We skip the details.
This was first proved in \cite{Pi2} using the Dudley-Fernique
metric entropy condition together with a certain interpolation argument.
Bourgain \cite{Bo} gave a completely different proof.
Both proofs show that (iv) implies that $\Lambda$ is Sidon, as defined
below,
and then Sidon implies subgaussian (see Theorem \ref{rud}).
\end{proof}
\begin{rem} Note that in the proof that (i) $\Rightarrow$ (ii)
we actually showed that (iv) $\Rightarrow$ (ii). Thus we gave a
complete proof of the equivalence of (ii), (iii), (iii)' and (iv).
\end{rem}
\begin{rem}\label{ia}  The proof that (iv) $\Rightarrow$ (i) in \cite{Pi2}
passes through the following
\begin{itemize}
  \item[(v)]  Let $1<p<2$. There is a constant $C$ such that
  for any $f$ in the linear span of $\Lambda$ we have
  $$\|f\|_{\psi_{p'}} \le C (\sum\n_{n\in \Lambda} |\hat f(n)|^p)^{1/p}.$$
\end{itemize}
We show in \cite{Pi2} that (iv) $\Rightarrow$ (v) (this is an  argument from the so-called real interpolation method).
Then using special properties  of the metric entropy integrals
we show that (v) $\Rightarrow$ Sidon, and  hence (v) $\Rightarrow$ (i) follows by Theorem \ref{rud}.\end{rem}

\section{Main open problem}\label{mop}

We now come to the main open problem concerning subgaussian sets
(or equivalently Sidon sets, that are defined in the next section) of characters on a compact Abelian group $G$.

\noindent{\bf Conjecture.}\emph{ Any subgaussian set
  is a finite union of quasi-independent sets.}

The conjecture is supported by the case when $G=\Z(p)^\N$.
Here $p>1$ is a prime number and $Z(p)=\Z/p\Z$ is the field
with $p$ elements.
We have $\hat{Z(p)}=Z(p)$. Indeed, any  $n\in \Z/p\Z$
(represented, if we wish, by  a number $n\in [0,p-1]$ modulo $p$)
defines a character $\gamma_n$ on $Z(p)$ by
$$\forall t\in Z(p)\quad \gamma_n( t)= e^{2\pi i tn/p}.$$
As before for $\Z$, the correspondence $n\leftrightarrow \gamma_n$
allows us to identify $\hat{Z(p)}$ with $Z(p)$.
One can also associate to $\gamma_n$
the $p$-th root of unity $\gamma_n(1)=e^{2\pi i n/p}.$

Let $ \Z(p)^{(\N)} \subset \Z(p)^{\N}$ denote the set of sequences $n=(n_k)\in \Z(p)^{\N}$
with only finitely many nonzero 
terms.
Let $n=(n_k)\in \Z(p)^{(\N)}$. Then the function $\gamma_n:\ \Z(p)^{\N} \to \T$
defined by
 $$\forall t=(t_k)\in   Z(p)^\N\quad\gamma_n( t)= e^{2\pi i \sum t_k n_k /p},$$
is a character on $Z(p)^\N$, and all the characters are of this form.
Thus again $n\leftrightarrow \gamma_n$
allows us to identify $\hat{Z(p)^\N}$ with $Z(p)^{(\N)}$.

The novel feature is that the group $\hat G=\Z(p)^{(\N)}$ is a vector space
over the field $Z(p)$. Of course
the scalar multiplication by $m\in \Z(p)$  is defined on $Z(p)^{(\N)}$
in the natural way 
$$\forall n=(n_k)\in Z(p)^{(\N)}\quad m\cdot n= (mn_k).$$
For $G= {Z(p)^\N}$, a complete description 
of subgaussian sets of characters on $G$
was given by Malliavin and Malliavin
\cite{MM}.
\begin{thm}[\cite{MM}]\label{th2}
Let  $p>1$ be a prime number.
 Let $G= {Z(p)^\N}$ and $\hat G=\Z(p)^{(\N)}$.
Let $\Lambda\subset \hat G\setminus\{0\}$.
The following are equivalent:
\begin{itemize}
\item[(i)]  $\Lambda$ is subgaussian. 
 \item[(ii)]  $\Lambda$ is a finite union of 
 linearly independent sets over the field $\Z/p\Z$.
\item[(iii)]  $\Lambda$ is a finite union of quasi-independent sets.
\end{itemize}
\end{thm}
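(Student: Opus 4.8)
The plan is to prove the cycle (i) $\Rightarrow$ (ii) $\Rightarrow$ (iii) $\Rightarrow$ (i), with essentially all the content concentrated in (i) $\Rightarrow$ (ii). The implication (iii) $\Rightarrow$ (i) is already available: by Remark \ref{ru2} (the proof of Proposition \ref{pqi} applies verbatim in any discrete Abelian group) a quasi-independent subset of $\hat G=\Z(p)^{(\N)}$ is subgaussian, and by Remark \ref{ru}, i.e.\ by \eqref{69}, a finite union of subgaussian sets is subgaussian. The implication (ii) $\Rightarrow$ (iii) is immediate: if $B\subset\hat G$ is linearly independent over $\Z/p\Z$, then, since $-1,0,1$ all lie in the field $\Z/p\Z$, any relation $\sum_{n\in B}\xi_n n=0$ with $\xi_n\in\{-1,0,1\}$ forces every $\xi_n=0$; so $B$ is quasi-independent, and a finite union of linearly independent sets is a fortiori a finite union of quasi-independent ones.

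Now assume (i). By the equivalence (i) $\Leftrightarrow$ (ii) of Theorem \ref{erd} (valid for an arbitrary discrete Abelian group) there is $\d>0$ such that every finite $A\subset\Lambda$ contains a quasi-independent subset $B\subset A$ with $|B|\ge\d|A|$. The key linear-algebraic observation I would record is that a quasi-independent subset $B$ of the $\Z/p\Z$-vector space $\hat G$ spans a subspace of dimension at least $|B|/\log_2 p$. Indeed, if $T\ne T'$ are subsets of $B$ with $\sum_{n\in T}n=\sum_{n\in T'}n$, then $\sum_{n\in T\setminus T'}n-\sum_{n\in T'\setminus T}n=0$ is a relation with coefficients in $\{-1,0,1\}$ supported on $B$, nontrivial because $T\ne T'$, contradicting quasi-independence; hence the $2^{|B|}$ subset sums of $B$ are pairwise distinct elements of $\mathrm{span}(B)$, so $p^{\dim\mathrm{span}(B)}=|\mathrm{span}(B)|\ge 2^{|B|}$. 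Consequently the $\Z/p\Z$-linear rank function $r$ on $\Lambda$ satisfies $r(A)\ge\dim\mathrm{span}(B)\ge(\d/\log_2 p)\,|A|$ for every finite $A\subset\Lambda$.

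To conclude I would invoke the matroid covering theorem of Nash--Williams: a finite matroid with rank function $r$ on a ground set $E$ can be covered by $k$ independent sets as soon as $|A|\le k\,r(A)$ for all $A\subseteq E$. Applying this to the $\Z/p\Z$-linear matroid restricted to each finite subset of $\Lambda$, with $k=\lceil\log_2 p/\d\rceil$, one covers every finite subset of $\Lambda$ by $k$ linearly independent sets; a routine compactness argument (Tychonoff on the space $\{1,\dots,k\}^{\Lambda}$ of colorings, together with the fact that a set is linearly independent iff each of its finite subsets is) then upgrades this to a covering of $\Lambda$ itself by $k$ linearly independent sets, which is exactly (ii).

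I expect the step invoking the covering theorem to be the crux. A naive greedy extraction of maximal linearly independent subsets only shows that the complement shrinks by a factor $1-\d/\log_2 p$ at each stage, hence needs on the order of $\log|A|$ pieces; so the \emph{bounded} number of pieces demanded by (ii) genuinely relies on the matroid covering theorem, the dimension estimate for quasi-independent sets above being precisely what makes its hypothesis $|A|\le k\,r(A)$ available. (The original argument of \cite{MM} exploited the vector-space structure of $\hat G$ more directly; the route sketched here isolates that structure into the two clean ingredients — a uniform proportional rank bound and matroid covering.)
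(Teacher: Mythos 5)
Your proof is correct, and the cycle (i)\,$\Rightarrow$\,(ii)\,$\Rightarrow$\,(iii)\,$\Rightarrow$\,(i) with (iii)\,$\Rightarrow$\,(i) via Remarks \ref{ru} and \ref{ru2} is exactly the paper's skeleton; but in the crucial direction (i)\,$\Rightarrow$\,(ii) you take a genuinely longer route. The paper picks a \emph{maximal} linearly independent subset $B$ of a given finite $A\subset\Lambda$; maximality forces $A\subset V_B:=\mathrm{span}(B)$, a subgroup of $\hat G$ of cardinality $p^{|B|}$, so the characters in $A$ factor through the finite quotient group dual to $V_B$, and Corollary \ref{c1} (itself a one-line consequence of the packing estimate, Theorem \ref{th1}) gives $|B|\log p\ge c\,|A|$ with $c$ depending only on $sg(\Lambda)$; Horn's Theorem \ref{ho} then produces the cover by $k$ linearly independent sets. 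You instead first import the implication (i)\,$\Rightarrow$\,(ii) of Theorem \ref{erd} to extract a \emph{quasi-independent} $B\subset A$ of proportional size, then upgrade quasi-independence to a rank bound via the distinct-subset-sums count $p^{\dim\mathrm{span}(B)}\ge 2^{|B|}$, and only then apply the covering theorem. That works, and the subset-sums observation is clean, but the step you borrow from Theorem \ref{erd} is precisely the heavy part of that theorem: it rests on the random selection argument and the constant-weight-code estimates of Lemmas \ref{gv}--\ref{l105}, none of which the paper's direct appeal to Corollary \ref{c1} requires. The paper's shortcut exploits exactly the extra leverage that the $\Z/p\Z$-vector-space structure of $\hat G$ provides: choosing $B$ maximal \emph{linearly} independent (rather than quasi-independent) automatically confines $A$ to a subspace of size $p^{|B|}$, which is then fed to the group-cardinality bound. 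One small nit: the covering theorem you attribute to Nash--Williams is, in the vector-matroid case used here, exactly Horn's theorem \cite{Ho}, which the paper already cites; the general matroid covering theorem is usually credited to Edmonds, and Nash--Williams's name is more commonly attached to the graphic (arboricity) case.
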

The miracle that produces this beautiful result
is a deep (and difficult) combinatorial fact in linear algebra
due to Horn \cite{Ho} (published also by Rado but 10 years later),
that says the following:
\begin{thm}[\cite{Ho}]\label{ho}
Let $\Lambda$ be a   subset of a vector space over any field. Let $k>0$ be an integer.
Assume that  any finite subset $A\subset \Lambda$
contains a (linearly) independent  subset $B\subset A$ with
$|B|\ge |A|/k$.
Then (and only then) $\Lambda$ can be decomposed as a union of
$k$ (linearly) independent  subsets.
\end{thm}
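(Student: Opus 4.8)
The ``only then'' implication is immediate: if $\Lambda=B_1\cup\cdots\cup B_k$ with each $B_j$ linearly independent, then for every finite $A\subset\Lambda$ the sets $A\cap B_1,\dots,A\cap B_k$ cover $A$ by $k$ independent sets, so the largest has at least $|A|/k$ elements. The whole content is thus the converse. The plan is first to reduce to the case of a \emph{finite} set $\Lambda$, and then to recognize the finite statement as the classical matroid partition (equivalently, matroid union) theorem.

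\emph{Reduction to the finite case.} Suppose the conclusion is known for finite sets. Give $\{1,\dots,k\}^{\Lambda}$ the product topology; by Tychonoff's theorem it is compact. For each finite $A\subset\Lambda$ let $C_A\subset\{1,\dots,k\}^{\Lambda}$ be the set of colorings $c$ for which each class $\{x\in A:\ c(x)=j\}$ is linearly independent. Then $C_A$ is closed, since membership depends only on the finitely many coordinates indexed by $A$; it is nonempty by the finite case applied to $A$, whose subsets inherit the rank hypothesis from $\Lambda$; and $C_{A_1}\cap\cdots\cap C_{A_m}\supset C_{A_1\cup\cdots\cup A_m}\neq\varnothing$, so $\{C_A\}$ has the finite intersection property. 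Any $c\in\bigcap_A C_A$ is a coloring of $\Lambda$ each of whose classes has all its finite subsets independent, hence is linearly independent, as desired.

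\emph{The finite case.} Let $M$ be the linear matroid of the finite set $\Lambda$, with rank function $r(A)=\dim\operatorname{span}(A)$. The hypothesis says exactly that $|A|\le k\,r(A)$ for all $A\subset\Lambda$, and the conclusion says exactly that $\Lambda$ is a union of $k$ independent sets of $M$; the equivalence of these two statements is the matroid partition theorem of Edmonds (equivalently, the matroid union theorem of Nash-Williams applied to $k$ copies of $M$: the union matroid has rank $\min_{A\subset\Lambda}\bigl(|\Lambda\setminus A|+k\,r(A)\bigr)$ at $\Lambda$, and this equals $|\Lambda|$ precisely under the hypothesis). I would prove it by induction on $|\Lambda|$. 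If there is a proper nonempty ``tight'' set $A$, i.e.\ $|A|=k\,r(A)$, apply the inductive hypothesis to the restriction $M|_A$ and to the contraction $M/A$: both inherit the inequality ($M|_A$ by monotonicity of $r$; $M/A$ because $r_{M/A}(X)=r(X\cup A)-r(A)$ and $|X|=|X\cup A|-|A|\le k\,r(X\cup A)-k\,r(A)$, using tightness of $A$). A cover of the rank-$r(A)$, size-$k\,r(A)$ matroid $M|_A$ by $k$ independent sets is forced to be a partition into $k$ bases; combining the $i$-th such basis with the $i$-th set of a cover of $\Lambda\setminus A$ by sets independent in $M/A$ yields $k$ sets independent in $M$ (a set independent in $M/A$ stays independent after adjoining any basis of $M|_A$) which cover $\Lambda$. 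If instead $M$ has no proper nonempty tight set, remove one element $e$, cover $\Lambda\setminus\{e\}$ by $k$ independent sets using induction, and try to fit $e$ in: if $e$ lies outside the span of one of them we are done, and otherwise one rearranges the cover by a sequence of single-element exchanges, organized as an augmenting path in the auxiliary exchange digraph, the point being that if no such rearrangement succeeds then the set of elements reachable from $e$ would violate $|A|\le k\,r(A)$.

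This exchange/augmenting-path argument is the genuine difficulty and the step I expect to be the main obstacle; the remainder is bookkeeping. Once the finite case is in place, the compactness argument above yields the general statement. (Alternatively one may simply cite the matroid partition/union theorem from the standard matroid-theory texts, so that only the compactness reduction needs to be spelled out.)
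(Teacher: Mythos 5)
The paper does not actually prove this theorem: it is stated as a black box, attributed to Horn \cite{Ho}, with the only commentary being that the hypothesis is clearly necessary. So there is nothing in the paper to compare your proof against, and the right question is simply whether your argument is sound.

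Your outline is the standard modern route and it is correct as far as you push it. The ``only then'' direction and the Tychonoff compactness reduction to finite $\Lambda$ are both right and complete (the closed sets $C_A$ do have the finite intersection property, and the diagonal set closure argument correctly passes finite-subset independence to independence of the color class). Your identification of the finite case with the matroid partition theorem, applied to the linear matroid $r(A)=\dim\mathrm{span}(A)$, is exactly correct, and the two branches of your induction are set up properly: the bookkeeping in the tight-set case (restriction to $A$, contraction by $A$, the forced partition into bases of $M|_A$, and the fact that adjoining a basis of $M|_A$ to an $M/A$-independent set gives an $M$-independent set) all check out, including the inequality $|X|=|X\cup A|-|A|\le k\,r(X\cup A)-k\,r(A)=k\,r_{M/A}(X)$ using disjointness of $X$ and $A$. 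The genuine gap is the one you flag yourself: the no-proper-tight-set branch, where you must either exhibit an element $e$ that slots into some color class directly, or run an augmenting-path exchange and show that failure forces a set $A$ (the reachable set in the exchange digraph) with $|A|>k\,r(A)$. That step is the whole content of the matroid partition theorem and you leave it as ``the genuine difficulty''; as a standalone proof this is therefore incomplete, though citing Edmonds or Nash-Williams (or Horn's original 1955 argument, which is an exchange induction of exactly this kind, phrased before matroid language existed) would of course discharge it. In short: right strategy, correct setup, one acknowledged and real hole at the augmenting-path step.
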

Note that the assumption is clearly necessary for the conclusion to hold.  
\begin{proof}[Proof of Theorem \ref{th2}] 
Assume (i). We will apply the criterion of Theorem \ref{ho}.
Let $A\subset \Lambda$ be a  finite subset.
Let $B$ be a maximal   independent  subset of $A$ over the field $\Z/p\Z$.
Then $A$ must be included in the vector space $V_B$ generated by $B$
(indeed, if not we would find an element that we could add to $B$,
and that would contradict the maximality of $B$).
Clearly $\dim(V_B)=|B|$ and hence $|V_B|=p^{|B|}$. 
But now a fortiori $V_B$ is finite group, and $sg(A)\le sg(\Lambda)$,
therefore by Corollary \ref{c1}
we have
 for any $0<\d<1$ $$\log |V_B|\ge |A|  (1-\d)^2/2sg(\Lambda)^2  ,$$
 and hence if $\kappa= (1-\d)^{-2} 2sg(\Lambda)^2 \log(p)$
 and if $k$ is the smallest integer such that $k\ge \kappa$
 $$|B|\ge |A|/\kappa\ge |A|/k.$$
By Theorem \ref{ho} (ii) follows. Then
(ii) $\Rightarrow$ (iii) is obvious
and (iii) $\Rightarrow$ (i) follows from Remarks \ref{ru} and \ref{ru2}.\end{proof}
\begin{rem} In \cite{Bop} Bourgain generalized (i) $\Leftrightarrow$ (iii) in Theorem
\ref{th2}
to the case  when $p=\prod p_k$  where $p_1,\cdots,p_n$ are distinct prime numbers. However, it seems that  (i) $\Leftrightarrow$ (iii)
is still an open problem even for $p=4$.
\end{rem}

\begin{rem} Let $(\gamma_n)$  ($n\in \N$) be any sequence of characters on a compact Abelian group $G$. Thus each $\gamma_n$
can be viewed as a random variable on $(G,m_G)$ with values in $\T=\{z\in \C\mid |z|=1\}$.
  Assume first that there is no ``torsion", i.e.
that $\gamma^\xi_n\not=1$ for any $\xi\not =0$ ($\xi\in \Z$).
Then $(\gamma_n)$  are stochastically independent as random variables
iff for any sequence $(\xi_n)\in \Z^{(\N)}$ not identically  $=0$
$$\prod  \gamma^{\xi_n}_n \not\equiv1.$$
Equivalently, for any such $(\xi_n)$
$$\int \prod  \gamma^{\xi_n}_n dm_G=0.$$
Indeed, this condition 
holds   iff for any $n$ and any
polynomials $Q_n(z,\bar z)$ we have for any $n$
  $$\int \prod\n_1^n  Q_k(\gamma _k) dm_G= \prod\n_1^n \int  Q_k(\gamma _k) dm_G.$$
  To check this just replace polynomials by monomials.
 
 Now assume (``torsion group") that there is a positive integer $p_n$ such that
 $\gamma_n^{p_n}=1$. We choose $p_n$ minimal and we assume $p_n>1$.
 Note that $\gamma_n^\xi=1$ iff $\xi\in p_n\Z$.
 Then $(\gamma_n)$  are stochastically independent as random variables
iff for any sequence $(\xi_n)\in [0,p_n-1]^{(\N)}$ not identically  $=0$
$$\prod  \gamma^{\xi_n}_n \not\equiv1.$$

This shows that quasi-independence appears as a weaker form of 
stochastic independence.  
However, if $G=\{-1,1\}^\N$ and if
$\gamma_n$ is the $n$-th coordinate on $G$ then 
the two forms of independence coincide (here $p_n=2$). This corresponds
to the usual random choices of signs, as in Remark \ref{vp}.

We note in passing that the classical Rademacher functions $(r_n)$, which
are defined on $([0,1],dt)$ by
$$\forall n\ge 0\quad r_n(t)={\rm sign}(\sin(2^n (2\pi t)))$$
form an i.i.d. sequence of uniformly distributed choices of signs.
In sharp contrast, the sequence $(\exp{i2^n (2\pi t)})$
is only quasi-independent as a sequence of characters on $\T$.
\end{rem}
\begin{rem}\label{pal} Any Hadamard lacunary sequence $\Lambda=\{n(k)\}$ is a finite union of quasi-independent
sets. Indeed, if 
\eqref{h01} holds there must exist a number $N$
such that  $$\forall n\quad|\Lambda \cap (2^n,2^{n+1}]|\le N.$$
This implies that $\Lambda$ is the union of $N$ sequences
satisfying $|\Lambda \cap (2^n,2^{n+1}]|\le 1$ for all $n$.
But then (by separating the $n$'s into evens and odds)
 each such sequence is the union of two sequences
 such that $k(n)>\sum\n_{j<n} k(j)$ which by 
Remark \ref{r10} are quasi-independent.
\end{rem}
\begin{rem} There are quasi-independent sets in $\N$ that are not finite unions
of Hadamard lacunary sets. Indeed, if $\Lambda$ is such a finite union,
then it is easy to see that there is a number $K$ such that
$|\Lambda \cap [2^n,2^{n+1})|\le K$ for any $n\ge 1$. 
The set $\{ 4^{n^2}+ 2^j\mid n\ge 1, 1\le j\le n\}$ clearly violates that,
but it is an easy exercise to check that it is quasi-independent.
\end{rem}
\begin{rem}[``Condition de maille"]  By a variant of the argument
in Theorem \ref{th1}, one can show that any subgaussian set $\Lambda
\subset \Z$ satisfies the following condition: there is a constant $  K>0$ such that for any $n,s>0$ and any
$k_1,\cdots,k_n\in \Z$ 
$$|\Lambda \cap \{k_1m_1+\cdots+k_n m_n\mid |m_1|+\cdots+|m_1|\le 2^s\}|\le Kns.$$
See \cite[p. 71]{Ka2} for details.
It seems to be still open whether this characterizes subgaussian sets. 
\end{rem}
By Theorem \ref{erd}, the conjecture highlighted in this section is equivalent
to the following purely combinatorial\\
 \noindent{\bf Problem:} {\it Let $\Lambda\subset \Z$.
Assume that there is $\d>0$ such that any finite subset
$A\subset \Lambda$ contains a quasi-independent   $B\subset A$
with $|B|\ge \d |A|$, does it follow that $\Lambda$ is a finite union of quasi-independent sets ?}

In 1983, I drew   Paul Erd\"os's  attention
to this problem, raised in \cite{Pis}.
He became interested
in the classes of sets that one could substitute
to that of quasi-independent  sets for which 
the problem would have an affirmative answer (see \cite{ENR,ENR2}).
He and his co-authors considered  generalizations of the problem
for graphs or hypergraphs, but the problem
remains open.

\section{Subgaussian bounded mean oscillation}\label{bmo}

The goal of this section is to show that the sequences of positive
integers that can be written as a finite union of Hadamard-lacunary ones
can be characterized as those that are subgaussian and remain subgaussian
uniformly when restricted to an arbitrary subarc $I$ equipped with its
normalized Lebesgue measure $m_I$.

Here we prefer to think of $\T$ as the unit circle in $\C$. By a subarc we mean
 a connected subset of $\T$
with non empty interior.
We denote by $\cl I$ the collection of all subarcs
in $\T$. For any $I\in \cl I$,
let $m_I=1_I dt/|I|$ (normalized Lebesgue measure on $I$).
For any $f\in L_1(\T)$ we set  
$$f_I=\int_I f dm_I \quad\text{and} \quad\|f\|_{*,1}=\left|\int fdm\right|+\sup_{I\in \cl I} \|f-f_I\|_{L_{1}(dm_I)}\in [0,\infty].$$
Note that for a complex-valued $f\in L_1(\T)$ its real and imaginary parts
satisfy obviously
$$\| \Re(f)\|_{*,1} \le \|f\|_{*,1}\quad\text{and} \quad  \| \Im(f)\|_{*,1}   \le\|f\|_{*,1}
.$$
The space BMO($\R$) (resp.  BMO($\C$))
 of functions with bounded mean oscillation is defined
as formed of all those \emph{real-valued} (resp.  \emph{complex-valued}) $f\in L_1(\T)$ such that $\|f\|_{*,1}<\infty$.
Equipped with the norm $f\mapsto \|f\|_{*,1}$ it becomes a  real (resp. complex) Banach space.
This space is of crucial importance in the theory of $H^p$-spaces (see e.g. \cite{Gar}).\\
The main point is that
  BMO($\R$)  is      the dual of $H^1$ (Fefferman's theorem).
  A priori, the space $H^1$ is a complex Banach space but  for this duality theorem
  we view it as real space.
Here we define $H^1$ as the closure in $L_1(\T)$ of the linear span,
denoted by $\cl P_+$, 
of the functions $\{e^{int}\mid n\ge 0\}$.  We equip it 
with the norm induced by $L_1$, that we denote by $\| \ \|_{H^1}$.
  Fefferman's  inequality  establishes the duality, as follows:
  \begin{equation}\label{feff}\exists C_F>0\ \forall f\in {\rm BMO}(\R),\ \forall x\in \cl P_+\quad \left|\int f \Re(x)dm\right|\le C_F \|f\|_{*,1}\|x\|_{H^1}.\end{equation}
This shows that we can associate to each $f  \in {\rm BMO}(\R)$
an  $\R$-linear form $\xi_f:\ H^1\to \R $, obtained by densely extending
the functional
$x\mapsto \xi_f(x)=\int f \Re(x)dm$ from $\cl P_+$ to the whole of $H^1$.
It turns out that any $\R$-linear form $\xi:\ H^1\to \R $ is of this form.
Moreover, the norm $\|f\|_{*,1}$ is equivalent
to the norm of $\xi_f:\ H^1\to \R$.   
In other words, BMO$(\R)$ can be identified with
the space of bounded $\R$-linear
forms on $H^1$. We call the latter space the  $\R$-linear dual of $H^1$, it is the dual of $H^1$ when we view the latter as a real Banach space.
Thus the content of Fefferman's duality theorem is that
    BMO$(\R)$  is the  $\R$-linear dual of $H^1$.
We refer the reader to \cite{Gar} for more on these topics.

In order to discuss other equivalent norms
on the space BMO, for any $a>0$ and $f\in L_1(\T)$ we define   
 $$\|f\|_{*,\psi_a}=\left|\int fdm\right|+\sup_{I\in \cl I} 
 \|f-f_I\|_{L_{\psi_a}(dm_I)}\in [0,\infty].$$
A famous theorem of John and Nirenberg asserts that
$f\in BMO(\C)$ iff $\|f\|_{*,\psi_1}<\infty$ and the norms
$f\mapsto\|f\|_{*, 1}$  and $f\mapsto\|f\|_{*,\psi_1}$ are equivalent.
(A fortiori, the same holds for $f\mapsto\|f\|_{*,\psi_a}$ for any $0<a\le1$.)
This particular fact is even valid for Banach space valued functions.
We refer to our recent book \cite{Pi4} for more information on  Banach
space valued $H^p$-spaces.

It is well known that the norms $f\mapsto\|f\|_{*, 1}$ or $f\mapsto\|f\|_{*,\psi_1}$ are \emph{not}
equivalent to the norm $f\mapsto\|f\|_{*,\psi_a}$ when $a>1$.
Nevertheless, the norm $f\mapsto\|f\|_{*,\psi_2}$
is equivalent to the usual BMO norm $f\mapsto\|f\|_{*, 1}$
when restricted to $f$ in the linear span of $\{\exp{(in_kt)}\}$
if the sequence $\{n_k\}$
is a finite union of Hadamard lacunary sequences.
This was proved in \cite{Le}. (Closely related results appear in \cite{CWW}).
More precisely, it turns out that this characterizes such sequences.
\begin{thm} 
Let $n_0<n_1<\cdots < n_k<\cdots$ be integers.
Let $\Lambda=\{n_k\}$.
The following are equivalent:
\begin{itemize}
\item[(i)] The set $\Lambda=\{n_k\}$ is a finite union of Hadamard lacunary sets.
\item[(ii)] There is a constant $C$ such that for any $x\in \ell_2$  
we have
$$\|\sum\n_k x_k e^{in_k t}\|_{*,\psi_2}\le C (\sum |x_k|^2)^{1/2} .$$
\item[(iii)] There is a constant $C$ such that for any $x\in \ell_2$  
we have
$$\|\sum\n_k x_k e^{in_k t}\|_{*,1}\le C (\sum |x_k|^2)^{1/2} .$$
\end{itemize}
\end{thm}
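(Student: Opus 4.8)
The plan is to prove the cycle $(i)\Rightarrow(ii)\Rightarrow(iii)\Rightarrow(i)$; the step $(i)\Rightarrow(ii)$ is the analytic core and I expect it to be the main obstacle.

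\emph{$(ii)\Rightarrow(iii)$.} This is immediate. On any probability space the Orlicz inclusion $L_{\psi_2}\subset L_1$ is bounded with a universal constant $c$, so $\|g\|_{L_1(m_I)}\le c\|g\|_{L_{\psi_2}(m_I)}$ for every arc $I$; taking the supremum over $I$ and observing that the two terms $|\int f\,dm|$ coincide gives $\|f\|_{*,1}\le\max(1,c)\,\|f\|_{*,\psi_2}$.

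\emph{$(iii)\Rightarrow(i)$.} I would argue by contradiction, using the elementary observation that $\Lambda$ fails to be a finite union of Hadamard lacunary sets precisely when, for every ratio $q>1$, one has $\sup_m|\Lambda\cap[m,qm)|=\infty$. Fix small universal constants, say $\delta=1/10$ and $\rho=1/(10\pi)$, and suppose $\Lambda$ is not such a union. Then for every $L$ there are an integer $m\ge1$ and points $n_{k_1}<\dots<n_{k_L}$ of $\Lambda$ lying in $[m,(1+\rho)m)$. Apply $(iii)$ to the coefficient vector supported on $\{k_1,\dots,k_L\}$ and equal to $1$ there, so that $f=\sum_{i=1}^L e^{in_{k_i}t}$ and $\|x\|_2=\sqrt L$, and take for $I$ the arc of $\T$ centred at $0$ with $|I|=2\delta/m$. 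Writing $n_{k_i}=m(1+s_i)$ with $s_i\in[0,\rho)$ and substituting $\theta=mt$, one finds $f=e^{i\theta}P(\theta)$ with $P(\theta)=\sum_i e^{is_i\theta}$, whence $|P(\theta)-L|\le L\rho\delta$ on $I$, whence $f_I=L\alpha+O(L\rho\delta)$ with $\alpha=(\sin\delta)/\delta\in(0,1)$ real, and therefore, pointwise on $I$,
\[
|f-f_I|\ \ge\ L\bigl(|e^{i\theta}-\alpha|-2\rho\delta\bigr)\ \ge\ L\bigl(\tfrac2\pi|\theta|-2\rho\delta\bigr).
\]
Integrating against $m_I$ gives $\|f-f_I\|_{L_1(m_I)}\ge L\delta\bigl(\tfrac1\pi-2\rho\bigr)\ge\tfrac{L\delta}{2\pi}$, while $(iii)$ forces this to be $\le C\sqrt L$; hence $L\le(2\pi C/\delta)^2$ for every $L$, which is absurd, so $(i)$ holds. (Equivalently, via Fefferman's duality $(iii)$ says the restriction map $H^1\to\ell_2$, $h\mapsto(\hat h(n_k))$, is bounded, and its failure for a non-lacunary $\Lambda$ is the failure of a Paley-type inequality.)

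\emph{$(i)\Rightarrow(ii)$.} This is Leblanc's theorem \cite{Le}, and I would only sketch its structure. Since $f\mapsto\|f\|_{*,\psi_2}$ is a norm, the triangle inequality reduces the matter to a single Hadamard lacunary sequence $\{n_k\}$ with $n_{k+1}/n_k\ge q>1$. Fix an arc $I$ and split $f=\sum x_k e^{in_k t}$ as $f^{\mathrm{low}}+f^{\mathrm{high}}$ according to whether $n_k\le1/|I|$ or not. The low part is nearly constant on $I$: its oscillation there is at most $\sum_{n_k\le1/|I|}|x_k|\,n_k|I|\le|I|\,\bigl(\sum_{n_k\le1/|I|}n_k^2\bigr)^{1/2}\|x\|_2\le C_q\|x\|_2$ by lacunarity, so $\|f^{\mathrm{low}}-(f^{\mathrm{low}})_I\|_{L_{\psi_2}(m_I)}\le\|f^{\mathrm{low}}-(f^{\mathrm{low}})_I\|_{L_\infty(I)}\le C_q\|x\|_2$. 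Carrying $(I,m_I)$ affinely onto $([0,2\pi],d\phi/2\pi)$ turns the high part into a lacunary series $\sum c_k e^{i\lambda_k\phi}$ with real frequencies $\lambda_k=n_k|I|/2\pi>1/2\pi$, still $q$-lacunary, and with $\sum|c_k|^2\le\|x\|_2^2$; a Schur-test estimate bounds its $L_2$-norm, hence its mean, by $O_q(\|c\|_2)$, so the whole question reduces to showing that such a real-frequency, frequency-bounded-below lacunary series is subgaussian on the interval with constant $\lesssim_q\|c\|_2$. This is the classical subgaussian estimate for lacunary series: the Riesz-product argument of Proposition \ref{pqi} goes through for real lacunary frequencies (quasi-independence over $\R$, to which one reduces, still holds), and it is also the content of the Chang--Wilson--Wolff square-function inequality \cite{CWW} applied to the essentially constant Littlewood--Paley square function of a lacunary series. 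By Lemma \ref{42} this $\psi_2$-estimate is the same as the required $sg$-bound; assembling it with the low-frequency estimate and with $|\hat f(0)|\le\|x\|_2$ gives $(ii)$. The transfer of the lacunary subgaussian/BMO estimate to the dilated, non-integer-frequency systems living on an arbitrary subarc is the step I expect to require real care, and for its details I would defer to \cite{Le}.
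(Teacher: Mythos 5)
Your proposal is correct, and for the genuinely nontrivial implication $(\text{iii})\Rightarrow(\text{i})$ you take a route different from (and arguably more elementary than) the paper's. The paper proves $(\text{iii})\Rightarrow(\text{i})$ via Fefferman duality: it builds, for each dyadic block $[2^n,2^{n+1}]$, a trapezoidal multiplier $\varphi_n$ whose Fourier synthesis $P_n$ is a difference of translated Fejér kernels and hence lies in $H^1$ with $\|P_n\|_{H^1}\le 2$; pairing $f$ against $\Re(P_n)$ through Fefferman's inequality \eqref{feff} converts the BMO bound into an $\ell_2$ bound on $(\varphi_n(n_k))_k$, forcing $|\Lambda\cap[2^n,2^{n+1}]|\le(8C_FC)^2$. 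You instead test condition $(\text{iii})$ directly on a short arc $I$ centred at $0$ of length $\approx 1/m$, using $L$ frequencies clustered in $[m,(1+\rho)m)$: on that arc the $L$ exponentials are coherent, $f\approx Le^{imt}$, and a pointwise lower bound on $|f-f_I|$ (of order $L|\theta|$ after rescaling $\theta=mt$) gives $\|f-f_I\|_{L_1(m_I)}\gtrsim L$, contradicting the $O(\sqrt L)$ bound from $(\text{iii})$. The numerology checks out, and this argument avoids $H^1$-BMO duality entirely, making that implication self-contained; the paper's duality argument is what you allude to in your closing parenthetical about Paley-type inequalities. For $(\text{ii})\Rightarrow(\text{iii})$ both you and the paper invoke the trivial embedding $L_{\psi_2}\subset L_1$ on a probability space, and for $(\text{i})\Rightarrow(\text{ii})$ both defer to Leli\`evre \cite{Le}; your low/high frequency splitting and the transfer to real-frequency lacunary series on a rescaled arc is indeed where the real work lies, and you are right to flag the failure of orthogonality (and of Riesz-product quasi-independence in the integer sense) after rescaling as the step that needs the detailed treatment in \cite{Le}.
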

\begin{proof}  The key fact here is that (i) $\Rightarrow$ (ii).  
It suffices obviously to prove (ii) assuming that the sequence $\{n_k\}$
is itself lacunary.
 This is proved in detail in \cite{Le} to which we refer the reader.\\ 
 (ii) $\Rightarrow$ (iii) is obvious.\\
 Assume (iii). We claim that there is a constant $N$ such that
 for any $n\ge 1$
 $|\Lambda \cap [2^n,2^{n+1}]|\le N$. From this claim, as already mentioned
 it is easy to deduce (i) (see Remark \ref{pal}).
 To prove the claim, fix $n\ge 1$ and let $\varphi_n:\ \N \to \R$ be
 the function 
 defined by the graph in the picture below. More explicitly, $\varphi_n(k)=1$  $\forall  k\in [2^n,2^{n+1}]$,
 $\varphi_n(k)=0$   $\forall k\not\in (0,2^{n+1}+2^{n})$
 and $\varphi_n$ passes affinely
 from  $0$ to $1$ (resp. $1$ to $0$) on the   interval  $[0,2^n]$ (resp.
  $[ 2^{n+1} ,2^{n+1}+2^{n}]$). 
 We then consider the trigonometric polynomial $P_n\in \cl P_+$ defined
  on $\T$ by $P_n(t)=\sum_{k\ge 0} \varphi_n(k) e^{ikt}$, so that
  $\varphi_n$ is the Fourier transform of $P_n$. It is  a well known
  fact that $\|P_n\|_{H^1}\le 2$. To check this observe that $P_n$ is the difference of two Fejer kernels, suitably   translated and scaled, as in the picture below. Explicitly, the classical Fejer kernel,
  which is defined by  $\hat{F_N}(k)=(1-|k|/N)^+$ ($k\in \Z$, $N\ge 1$) satisfies
  $\|F_N\|_1=1$, and 
  we have $$\forall k\in \Z\quad \hat{P_n}(k)= \varphi_n(k)=  
  \frac{3}{2}F_{2^n+2^{n-1}} (k- (2^n+2^{n-1}) )  -\frac{1}{2}F_{2^{n-1}}(k- (2^n+2^{n-1}) ) .$$
  
  \begin{figure}[h]
\centering
\begin{tikzpicture}[scale=1.5]
\draw[->] (0,0)--(0,2);
\draw[->] (0,0)--(8,0);
\filldraw [black] (0,0) circle (1pt);
\filldraw [black] (2.5,0) circle (1pt);
\filldraw [black] (5,0) circle (1pt);
\filldraw [black] (7.5,0) circle (1pt);
\filldraw [black] (0,1) circle (1pt);
\draw (-0.2,1) node {{\tiny $1$}};
\draw (-0.1,-0.2) node {{\tiny $0$}};
\draw (7.5,-0.2) node { {\tiny $2^{n+1} +2^n$}};
\draw (5,-0.2) node { {\tiny $2^{n+1}$}};
\draw (2.5,-0.2) node {{\tiny $2^n$}};
\draw (0,0)--(2.5,1)--(5,1)--(7.5,0);
\draw [dashed] (2.5,0)--(2.5,1);
\draw [dashed] (5,0)--(5,1);
\draw [dashed] (2.5,1)--(3.75, 1.5)--(5,1);
\end{tikzpicture}
\end{figure}

  Let $f=\sum\n_k x_k e^{in_k t}$. 
  By \eqref{feff} we have  
  $$|\sum\n_k  \varphi_n(n_k) x_k/2|= \int   f(t)   \Re(P_n(t) )  dm|
  \le |\int  \Re(f(t))  \Re(P_n(t) )  dm|+ |\int  \Im(f(t))  \Re(P_n(t) )  dm|$$
  $$ \le 
  C_F \|\Re(f) \|_{*,1}\|P_n\|_{H^1}+ C_F \|\Im(f) \|_{*,1}\|P_n\|_{H^1}\le 2C_F \|f \|_{*,1}\|P_n\|_{H^1} \le 4C_FC (\sum |x_k|^2)^{1/2}.$$
  Taking the supremum of the left hand side over all $(x_k)$
  such that  $(\sum |x_k|^2)^{1/2}\le 1$ we obtain
  $$(\sum\n_k \varphi_n(n_k) ^2)^{1/2}  /2 \le 4C_FC .$$
 A fortiori, recalling $\varphi_n(k)=1$  $\forall  k\in [2^n,2^{n+1}]$, this implies
 $$|\Lambda \cap [2^n,2^{n+1}]|\le (8C_FC)^2 .$$
 This proves the claim and concludes the proof.
\end{proof}

 \section{Sidon sets}\label{sid}
The notion of Sidon set, or more generally of ``thin set", has a long history. See
the classical books \cite{HR,LR,GMc}. For a 
 more recent account  see \cite{GH}.
 There are many connections between Sidon sets
 and random Fourier series. See 
\cite{MaPi}
 for more in this direction.
In general Kahane's books \cite{Ka,Ka2} are a wonderful introduction to the use
random functions in harmonic analysis. The many connections with Banach space theory are presented in \cite{LQ}.
\begin{dfn}  
Let $\Lambda=\{\varphi_n\mid n\ge 1\}$ be a bounded sequence
in $L_\infty(T,m)$ ($  (T,m) $ being a probability space).
We say that $\Lambda$ is  Sidon
if there is a constant $C$ such that
for any finitely supported  scalar sequence $(a_n)$ we have
$$\sum |a_n|\le C \|\sum  a_n \varphi_n \|_\infty.$$
\end{dfn}
Note that if $C'=\sup\n_{n\ge 1} \|\varphi_n \|_\infty$ we have
obviously
$$\|\sum  a_n \varphi_n \|_\infty\le C' \sum |a_n|.$$

Let $\Lambda$ be a set of continuous characters on a compact Abelian group $G$.
We may view $\Lambda$ as a subset of $L_\infty(G,m_G)$. 
For instance when $G=\T$ we may identify $\Lambda$
with a subset of $\Z$, and we view 
$$ \{\varphi_n\mid n\ge 1\}=\{ e^{int} \mid n\in \Lambda\}$$

The study of Sidon sets,
or more generally of `thin" sets, was a very active subject in harmonic
analysis in the 1960's and 1970's.
A puzzling problem that played an important role there early on
was the union problem: whether (in the case of sets of characters)
the union of two Sidon sets is a Sidon set. The difficulty is that
if $\Lambda_1$ and $\Lambda_2$ are disjoint sets in $\hat G$
there is a priori no inequality of the form
$$\|\sum\n_{n\in \Lambda_1} a_n \varphi_n \|_\infty \le C  
\|\sum\n_{n\in \Lambda_1\cup \Lambda_2} a_n \varphi_n \|_\infty.$$
 The union problem was eventually solved positively by Sam Drury in 1970
using a very beautiful argument involving
 convolution in measure algebras (see \cite{LR}).

Rider \cite{Ri} refined Drury's trick and
 connected Sidon sets with random Fourier
series. To explain this we need one more definition.
Recall that  $(\vp_n)$ is an i.i.d. sequence
of choices of signs on a probability space $(\Omega,\P)$, i.e.
 $(\vp_n)$ are independent and $\P\{\vp_n=\pm 1\}=1/2$.
\begin{dfn}  
Let $\Lambda=\{\varphi_n\mid n\ge 1\}$ be a bounded sequence
in $L_\infty(T,m)$ ($  (T,m) $ being a probability space).
We say that $\Lambda$ is  randomly Sidon  
if there is a constant $C$ such that
for any finitely supported  scalar sequence $(a_n)$ we have
$$\sum |a_n|\le C\E \|\sum \vp_n a_n \varphi_n \|_\infty.$$
\end{dfn}

\begin{thm}[Rider \cite{Ri}]\label{riri} Let $\Lambda\subset \Z$  or more generally
$\Lambda\subset \hat G$ ($\hat G$   any discrete Abelian group). If  $\Lambda$ is randomly Sidon  then it is Sidon
(and the converse is trivial).
\end{thm}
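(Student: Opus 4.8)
The plan is to establish the nontrivial implication; the converse is immediate, since if $\Lambda$ is Sidon with constant $C$ then for every choice of signs $\sum|a_n|=\sum|\vp_n a_n|\le C\|\sum\vp_n a_n\varphi_n\|_\infty$, and averaging over the signs gives randomly Sidon with the same constant. For the forward direction I would follow Rider's argument \cite{Ri}, which is a probabilistic reworking of Drury's solution of the union problem \cite{LR}. Both ``Sidon'' and ``randomly Sidon'' are finitary conditions — the defining inequality for any finitely supported $(a_n)$ involves only finitely many of the $\varphi_n$ — so it is enough to prove the quantitative local version: if every finite $A\subset\Lambda$ is randomly Sidon with constant $\le C$, then every finite $A\subset\Lambda$ is Sidon with constant $\le F(C)$ for an $F$ depending on $C$ alone. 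Fix such a finite $A$. By a standard duality and extreme-point argument it is then enough to produce, for each unimodular function $\phi\colon A\to\T$, a measure $\mu$ in the space $M(G)$ of bounded Borel measures on $G$ with $\|\mu\|\le F(C)$ and $\hat\mu(\gamma)=\phi(\gamma)$ for every $\gamma\in A$.

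First I would dualize the hypothesis. Randomly Sidon with constant $C$ says exactly that the operator $\ell^1(A)\to L^1(\{\pm1\}^A;C(G))$ sending $(a_\gamma)$ to $\vp\mapsto\sum_\gamma\vp_\gamma a_\gamma\gamma$ is bounded below by $C^{-1}$; since its domain is finite-dimensional, the adjoint is onto with the matching norm bound. Unwinding this produces, for each unimodular $\phi\colon A\to\T$, a family $(\mu_\vp)_{\vp\in\{\pm1\}^A}$ in $M(G)$ with $\|\mu_\vp\|\le C$ for \emph{every} $\vp$ and $\E_\vp[\vp_\gamma\,\hat\mu_\vp(\gamma)]=\phi(\gamma)$ for all $\gamma\in A$; equivalently, on the product group $\{\pm1\}^A\times G$ the twisted set $\widetilde A=\{\vp_\gamma\otimes\gamma\colon\gamma\in A\}$ is Sidon with constant $\le C$. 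What makes the randomly Sidon hypothesis genuinely stronger than the bare Sidonicity of $\widetilde A$ is that here the interpolating mass is spread \emph{uniformly} over all $2^{|A|}$ sign patterns (each slice $\mu_\vp$ has norm $\le C$), whereas an interpolating measure for $\widetilde A$ might a priori be concentrated on a few patterns; the next step uses this spreading in an essential way.

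The heart of the matter, and the step I expect to be the main obstacle, is the de-randomization: converting the family $(\mu_\vp)$ — which reproduces $\phi$ on $A$ only in the averaged, sign-linearized sense above — into a \emph{single} measure $\mu$ on $G$ that reproduces $\phi$ on $A$ exactly, with $\|\mu\|$ bounded by a function of $C$ alone. This is where the group structure of $G$ (convolution, translation invariance) is indispensable, and where honest Riesz products fail because $A$ need not be dissociate. I would carry it out via Drury's convolution device: applied to $\widetilde A$, it furnishes an interpolating measure whose Fourier transform is moreover small off $\widetilde A$, and recombining its slices by a Riesz-product-type averaging in the sign variables yields a $G$-measure whose Fourier transform equals $\phi$ on $A$ up to a small off-diagonal error — the uniform spreading of the slices being what one uses to keep the norm of this combination under control — after which a Neumann-series correction promotes the approximate interpolation to an exact one with norm $\le F(C)$. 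Threading this back through the duality of the first paragraph yields the Sidon constant of $A$. Everything besides Drury's device — the reduction to finite sets, the two duality computations, the final correction — is routine; Drury's device is the only deep ingredient, and it is exactly its reliance on the group structure that makes the analogous statement for arbitrary uniformly bounded orthonormal systems require genuinely new tools (cf. \cite{BoLe,Pi}).
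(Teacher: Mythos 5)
The paper only \emph{states} Theorem \ref{riri} and cites Rider \cite{Ri}, remarking that the argument refines Drury's; it does not reproduce the proof, so there is nothing internal to compare your sketch against. That said, your outline does trace the shape of Rider's argument correctly: the reduction to finite $A$, the duality converting the random-Sidon lower bound into a \emph{uniformly} bounded family $(\mu_\vp)\subset M(G)$ reproducing a prescribed unimodular $\phi$ in the sign-averaged sense, the re-reading of this as Sidonicity of the twisted set $\widetilde A$ on $\{\pm 1\}^A\times G$ with the stronger slice-wise bound, and then the de-randomization via Drury's convolution device, a Riesz-product average over the sign variable, and a Neumann-series correction. Your remark that the uniform spreading over sign patterns is what distinguishes the hypothesis from bare Sidonicity of $\widetilde A$, and is what ultimately keeps the recombined measure bounded, is exactly the right emphasis.

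If you flesh this out, make two things explicit at the recombination step $\sigma=\E_\vp[R(\vp)\nu_\vp]$, where $R$ is a positive Riesz product on $\{\pm1\}^A$ and $\nu$ the Drury-modified measure on the product. First, it is the slice-wise bound $\sup_\vp\|\nu_\vp\|$, not the total-variation norm $\|\nu\|_{M(\{\pm1\}^A\times G)}$, that yields $\|\sigma\|\le(\sup_\vp\|\nu_\vp\|)\,\E_\vp R(\vp)=\sup_\vp\|\nu_\vp\|$; you must therefore check that Drury's device preserves the slice-wise bound inherited from $(\mu_\vp)$, which it does because it convolves with probability measures on $\{\pm1\}^A\times G$ and such convolution contracts the supremum of slice norms. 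Second, writing $W_S=\prod_{\gamma'\in S}\vp_{\gamma'}$, the Walsh expansion gives $\widehat\sigma(\gamma)=\sum_{S\subset A}\widehat R(S)\,\widehat\nu(W_S\otimes\gamma)$; the coefficient of $\phi(\gamma)$ is only $\widehat R(\{\gamma\})=\delta$, not $1$, and the remainder is bounded by $\eta\sum_{S\neq\{\gamma\}}|\widehat R(S)|\le\eta(1+\delta)^{|A|}$ where $\eta$ is Drury's off-diagonal parameter. So $\eta$ must be taken exponentially small in $|A|$ — which is harmless only because Drury's lemma gives $|\widehat\nu|\le\eta$ off $\widetilde A$ with a total-variation bound \emph{independent} of $\eta$ — and the final $F(C)$ absorbs a factor $1/\delta$. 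These are bookkeeping, not obstacles, but they are exactly the hinges on which the de-randomization turns, and they are where the group structure of $G$ (hence the failure of the argument for general bounded orthonormal systems) enters.
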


Rider's proof of this theorem can be interpreted as a refinement
of Drury's, and indeed, Rider's Theorem implies that the union
of two Sidon sets is a Sidon set, because it is easy to check 
that the union of two randomly Sidon sets is randomly Sidon.
Indeed, now if $\Lambda_1$ and $\Lambda_2$ are disjoint sets in $\hat G$
we do have
$$\E\|\sum\n_{n\in \Lambda_1} \vp_n a_n \varphi_n \|_\infty \le   
\E\|\sum\n_{n\in \Lambda_1\cup \Lambda_2} \vp_n a_n \varphi_n \|_\infty.$$

The connection with subgaussian sequences
originates in the following
\begin{thm}[\cite{Ru,Pi}]\label{rud} Let $  \Lambda\subset \Z\setminus \{0\}$  or more generally
$\Lambda\subset \hat G \setminus \{0\}$ ($\hat G$   any discrete Abelian group). Then $\Lambda$ is Sidon if and only if it is subgaussian.
\end{thm}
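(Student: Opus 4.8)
The plan is to treat the two implications separately: ``Sidon $\Rightarrow$ subgaussian'' is elementary — essentially Rudin's classical $\Lambda(p)$-estimate, \cite{Ru} — while ``subgaussian $\Rightarrow$ Sidon'' is the deep half, which I would reduce to the ``delicate'' step already flagged in the proof of Theorem~\ref{erd}.

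\textbf{Sidon $\Rightarrow$ subgaussian.} Let $\Lambda$ be Sidon with constant $C$. Dualize the Sidon inequality (Hahn--Banach: the map $(a_\lambda)\mapsto\sum a_\lambda\gamma_\lambda$ embeds $\ell_1(\Lambda)$ into $C(G)$ with inverse of norm $\le C$, so for every $\psi\in\ell_\infty(\Lambda)$ with $\|\psi\|_\infty\le1$ there is $\mu\in M(G)$ with $\|\mu\|_{M(G)}\le C$ and $\widehat\mu|_\Lambda=\psi$). Fix a finite $A\subset\Lambda$ and $f=\sum_{\lambda\in A}a_\lambda\gamma_\lambda$. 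For $\varepsilon\in\{-1,1\}^A$, choosing $\psi=\varepsilon$ produces $\mu_\varepsilon$ with $\mu_\varepsilon*f=\sum_{\lambda\in A}\varepsilon_\lambda a_\lambda\gamma_\lambda=:f_\varepsilon$ (since $(\mu*f)^\wedge=\widehat\mu\,\widehat f$), and because $\varepsilon_\lambda^2=1$ the same $\mu_\varepsilon$ recovers $f$ from $f_\varepsilon$; hence $\|f\|_p\le C\|f_\varepsilon\|_p$ for every $p\ge1$ and every $\varepsilon$, by Young's inequality. Averaging over $\varepsilon$ and applying Jensen, Fubini and Khintchine's inequality pointwise in $t$ (using $|\gamma_\lambda(t)|=1$) gives, for $p\ge2$,
$$\|f\|_p\ \le\ C\,\E_\varepsilon\|f_\varepsilon\|_p\ \le\ C'\sqrt p\,\Big(\sum_{\lambda\in A}|a_\lambda|^2\Big)^{1/2}\ =\ C'\sqrt p\,\|f\|_2 .$$
Since each $\gamma_\lambda$ ($\lambda\ne0$) has mean zero, the real functions $\Re(\sum z_\lambda\gamma_\lambda)$ are mean zero; applying Lemma~\ref{42} to them (equivalence of $sg(\cdot)$ and $\sup_{p\ge1}p^{-1/2}\|\cdot\|_p$) and unwinding the $\C$-subgaussian definition gives $sg(\{\gamma_\lambda\})\lesssim C$.

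\textbf{Subgaussian $\Rightarrow$ Sidon.} If $\{\gamma_\lambda\}$ is subgaussian with constant $s$, then taking coefficients $\equiv1$ on a finite $A$ gives $sg\big(\Re\sum_{\lambda\in A}\gamma_\lambda\big)\le s|A|^{1/2}$, i.e. condition (iv) of Theorem~\ref{erd} holds. So the whole weight falls on the implication (iv) $\Rightarrow$ Sidon — exactly the step whose proof is omitted in the excerpt, and the real content of the theorem. Following Remark~\ref{ia} I would argue in two stages. \emph{Stage~1} (soft): a real-interpolation argument upgrades (iv) to statement (v) of Remark~\ref{ia}, namely $\|f\|_{\psi_{p'}}\le C\big(\sum_{\lambda\in\Lambda}|\widehat f(\lambda)|^p\big)^{1/p}$ for $1<p<2$ and all $f$ with spectrum in $\Lambda$. \emph{Stage~2} (the main obstacle): deduce that $\Lambda$ is Sidon from (v). One first reduces, by Rider's theorem (Theorem~\ref{riri}), to showing $\Lambda$ \emph{randomly} Sidon, i.e. $\E\big\|\sum_{\lambda\in A}\varepsilon_\lambda a_\lambda\gamma_\lambda\big\|_\infty\gtrsim\sum_{\lambda\in A}|a_\lambda|$; this is the ``convexification'' that makes disjoint unions, hence an iteration, manageable. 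One then compares this random Fourier series with the Gaussian series $\sum g_\lambda a_\lambda\gamma_\lambda$ on $G$; because $G$ is a group the relevant metric $d(s,t)^2=\sum|a_\lambda|^2|1-\gamma_\lambda(s-t)|^2$ is translation invariant, so by Dudley--Fernique $\E\sup_{t\in G}|\sum_\lambda g_\lambda a_\lambda\gamma_\lambda(t)|$ is equivalent to a metric-entropy integral governed by the Haar measure, and a careful analysis of this integral (this is where (v) is really used) forces the desired lower bound by $\sum|a_\lambda|$. I expect this last analysis to be the genuinely hard analytic core; it is \cite{Pi2}, and Bourgain \cite{Bo} reaches the same conclusion by a quite different, more combinatorial route. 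I would invoke it rather than reprove it.

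Finally, a warning about a tempting shortcut that fails. Subgaussian is also equivalent (Theorem~\ref{erd}(ii)) to: every finite $A\subset\Lambda$ contains a proportional quasi-independent — hence uniformly Sidon — subset. One might hope to bootstrap this to ``$\Lambda$ is a \emph{bounded} union of quasi-independent sets'' and then conclude by Rider; but that bootstrap is precisely the open problem of \S\ref{mop} — the analogue for quasi-independence of the Horn--Rado theorem (Theorem~\ref{ho}) is false, quasi-independence not being a matroid notion. So Stage~2, with its entropy/majorizing-measure machinery, cannot be bypassed, and that is where the real difficulty lies.
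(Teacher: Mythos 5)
Your ``Sidon $\Rightarrow$ subgaussian'' is essentially the paper's argument with cosmetic changes (Rademachers plus Khintchine instead of Steinhaus $z\in\T^\N$ plus Lemma~\ref{42}); one small omission is that Hahn--Banach first produces $\nu$ with $\nu(\gamma_\lambda)=\psi_\lambda$, and you must pass to the reflected measure $\mu(f)=\int f(-t)\,d\nu(t)$ to get $\widehat\mu(\gamma_\lambda)=\psi_\lambda$ — the paper does this explicitly, and it is a one-line fix. For ``subgaussian $\Rightarrow$ Sidon,'' however, you take a genuinely different route from the paper. You send the reader back through Theorem~\ref{erd}(iv), Remark~\ref{ia}(v), real interpolation, Rider's randomly-Sidon reduction, and the Dudley--Fernique/majorizing-measure analysis of \cite{Pi2} (or Bourgain's \cite{Bo}) — the 1978--85 proof. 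The paper does \emph{not} outsource this: it proves the hard direction afresh in \S\ref{bole} via Theorem~\ref{mr1}, whose ingredients are Proposition~\ref{p8} (subgaussian $\Leftrightarrow$ $C$-dominated by the Gaussian sequence, via Talagrand's comparison and L\'evy's extension lemma) and the Mehler-kernel tensor decomposition of Theorem~\ref{nt1}, $\sum z_n\varphi_n\otimes\varphi_n=t+r$ with $\|t\|_\wedge$ controlled and $\|r\|_\vee$ small; this gives $\otimes^2$-Sidon directly, and for characters $\otimes^2$-Sidon collapses to Sidon (Remark~\ref{ss}). The paper's route buys generality — it never uses translation invariance, so it covers arbitrary uniformly bounded orthonormal systems, which is the whole point of \S\ref{bole} — and it sidesteps the entropy-integral and interpolation analysis. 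Your route is closer to the historical development and legitimate if one is content to cite \cite{Pi2} or \cite{Bo}, but it is not the proof this paper gives. Your closing warning — that ``proportional quasi-independent subsets'' cannot be bootstrapped into ``bounded union of quasi-independent sets'' because quasi-independence lacks the Horn--Rado matroid structure, which is exactly the open problem of \S\ref{mop} — is correct and a useful remark.
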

Rudin proved that Sidon implies subgaussian and asked whether
the converse was true. We proved this in \cite{Pi}, using Gaussian random Fourier
series. Bourgain \cite{Bo}
gave a more direct proof avoiding random Fourier
series.
 In any case, Drury's ideas are still somewhere in the background, and this
is not surprising: indeed, it is obvious (recall \eqref{69}) that  the union
of two subgaussian sequences is a subgaussian sequence.
\begin{proof}[Proof of Theorem \ref{rud}] Assume $\Lambda\subset \hat G$ Sidon. 
Let $M(G)$ be the space of (complex) measures on $G$ equipped
with the total variation norm $\|\mu\|_{M(G)}=|\mu|(G)$.
Recall the identification $M(G)=C(G)^*$.
Let us enumerate $\Lambda=\{ \gamma_n \mid n\in \N\}$.
For any $z=(z_n)\in \T^\N$ and any $f\in {\rm span}(\Lambda)$ we have
$$|\sum z_n \int \ovl{ \gamma_n} f dm_G|\le \sum\n_{\gamma\in \Lambda} |\hat f(\gamma)| \le C \|f\|_{C(G)}. $$
By Hahn-Banach there is a  $\nu_z\in M(G)$
with $\|\nu\|_{M(G)}  \le C$
such that
$\nu_z(f)= \sum z_n \int \ovl{ \gamma_n} f dm_G$ or equivalently 
$\nu_z(\gamma_n)=z_n$ for all $n$.  
Let $\mu_z$ be the symmetric of $\nu_z$ defined by
$\mu_z(f)= \int f(-t)  \nu_z(dt)$.
Then $\hat{\mu_z}(\gamma_n)=z_n$ for all $n$. 
Let $f\in {\rm span}(\Lambda)$, say 
$f=\sum a_n \gamma_n$. Then
$\mu_z\ast f=\sum a_n z_n \gamma_n$ and
$\|\mu_z\ast f\|_p \le C \|{\mu_z}\|_{M(G)}\le C \|f\|_p$.
But we may apply this last inequality also to
$f=\sum a_n \ovl{z_n} \gamma_n$. This gives
us
$$\forall z\in \T^\N\quad \| \sum a_n  \gamma_n\|_p\le C \| \sum z_n a_n  \gamma_n\|_p.$$
Integrating the $p$-th power over $z$ we find
$$  \| \sum a_n  \gamma_n\|_p\le C \left(\int | \sum z_n a_n  \gamma_n(t) |^p dm_G(t) dm_{\T^\N}(z) \right)^{1/p} $$
and by  the translation invariance of $m_{\T^\N}$ this last term is the same as
$C(\int | \sum z_n a_n    |^p  dm_{\T^\N}(z) )^{1/p} $.
Therefore we obtain
$$  \| \sum a_n  \gamma_n\|_p\le C(\int | \sum z_n a_n    |^p  dm_{\T^\N}(z) )^{1/p}. $$
But since we know (see Remark \ref{vp})
that $sg{(z_n)}\le 1$, 
by Lemma \ref{42}
we obtain 
$$  \| \sum a_n  \gamma_n\|_p\le C \beta\sqrt{p}
(\sum |a_n|^2)^{1/2} $$
where $\beta$ is a numerical constant. By Lemma \ref{42}
again, $\Lambda=(\gamma_n)$ is subgaussian.\\
That subgaussian implies Sidon will be fully proved
in a more general framework in the next section (see Remark \ref{ss}).
\end{proof}
\section{Subgaussian bounded orthonormal systems}\label{bole}
Recently 
Bourgain
 and Lewko \cite{BoLe}
 tried to understand what remains true
 for general bounded orthonormal systems 
 of the equivalences described in \S \ref {sid},
 namely the equivalence between Sidon, randomly Sidon and
 subgaussian.
 
 Obviously Sidon $\Rightarrow$ randomly Sidon remains true.
 However, it is easy to see that Sidon $\not\Rightarrow$ subgaussian
  for general  orthonormal systems bounded
 in $L_\infty$. Indeed,
 if $(\varphi_n)$ is Sidon  say on $([0,1], dt)$
 then any system on $([0,2], dt/2)$ that coincides with
 $(\varphi_n)$ on $[0,1]$ is still Sidon, but 
 if its restriction to $[1,2]$ is not subgaussian,
 the resulting system on $([0,2], dt/2)$
 cannot be subgaussian.
For the converse implication,
it turns out to be more delicate to produce a counterexample
but  Bourgain and Lewko \cite{BoLe} managed to do that.
Nevertheless, they proved that subgaussian implies
$\otimes^5$-Sidon in the following sense:
\begin{dfn}
Let $k\ge 1$.
We say that $(\varphi_n)$ is $\otimes^k$-Sidon   with constant $C$
if the system $\{\varphi_n(t_1)\cdots \varphi_n(t_k)\}$ 
(or equivalently $\{\varphi_n^{\otimes k} \}$) is Sidon 
with constant $C$ in $L_\infty(T^k,m^{\otimes k})$.
\end{dfn}
In  \cite{BoLe} they asked whether $5$ could be replaced by $2$,
and in \cite{Pi} we showed that indeed it is so:
\begin{thm}\label{mr1} Any subgaussian system bounded
in $L_\infty(T,m)$ and orthonormal in $L_2(T,m)$ is
$\otimes^2$-Sidon.
\end{thm}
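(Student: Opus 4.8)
The plan is to deduce the theorem from two steps. \emph{Step 1:} a subgaussian uniformly bounded orthonormal system $(\varphi_n)$ is \emph{randomly Sidon}, i.e.\ $\sum|a_n|\le C\,\E_\vp\|\sum\vp_n a_n\varphi_n\|_{L_\infty(T,m)}$. \emph{Step 2:} a randomly Sidon uniformly bounded orthonormal system is $\otimes^2$-Sidon --- a substitute, in the absence of a group structure, for Rider's Theorem \ref{riri}. Both steps hinge on Proposition \ref{p8}: subgaussianity of $(\varphi_n)$ is equivalent to the doubly-indexed system $\{\varphi_{n,k}\}$ on $(T^\N,m^{\otimes\N})$ being $C$-dominated by an i.i.d.\ normalized Gaussian sequence, i.e.\ to the existence of $u\colon L_1(\Omega')\to L_1(T^\N,m^{\otimes\N})$ with $\|u\|\le C$ and $u(g_{n,k})=\varphi_{n,k}$. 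Working with $\{\varphi_{n,k}\}$ instead of $(\varphi_n)$ is the source of the tensor square: two distinct coordinates give $\varphi_{n,k}(t)\varphi_{n,l}(t)=\varphi_n(t_k)\varphi_n(t_l)$, so a Sidon bound extracted along a pair of coordinates is exactly a $\otimes^2$-Sidon bound for $(\varphi_n)$. (In practice the two steps may fuse into a single argument run directly off the Gaussian domination; I separate them only for exposition.)

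For Step 1 I would use that, once $(\varphi_n)$ is dominated by a Gaussian sequence, the supremum comparison \eqref{301} is available, and combine it with a minoration of the random process $t\mapsto\sum a_n\varphi_n(t)$. The packing estimate of Theorem \ref{th1} provides, for $a_n\equiv 1$ on a block of size $n$, exponentially many points of $T$ pairwise $\gtrsim\sqrt n$-separated in the metric $d(s,t)=(\sum|\varphi_n(s)-\varphi_n(t)|^2)^{1/2}$; feeding this into Sudakov's minoration for the Gaussian process $\sum g_n\varphi_n(\cdot)$ yields $n\lesssim\E_g\|\sum g_n\varphi_n\|_\infty$, and a truncation/rescaling argument upgrades this to general coefficients. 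Since genuine Sudakov minoration is available only on the Gaussian side, I expect one first proves the Gaussian form $\sum|a_n|\lesssim\E_g\|\sum g_n a_n\varphi_n\|_\infty$ and carries that through Step 2; a direct argument through $u$ (or a bilateral comparison of the Gaussian and Bernoulli randomizations) recovers the $\vp_n$-version if wanted.

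Step 2 is the heart of the matter: a Drury--Rider argument carried out on the Gaussian model rather than on a group. In the classical proof that a union of Sidon sets is Sidon one re-tunes the coefficients inside a Riesz product $\prod_n(1+\mathrm{Re}(\bar z_n\bar\gamma_n))$ by convolving with a measure $\mu_z$ with $\widehat{\mu_z}\equiv z$ on $\Lambda$; no such convolution exists for a general system, but Lemma \ref{cm} supplies its replacement --- for $z\in[-1,1]^N$ the Ornstein--Uhlenbeck multiplier $\Theta_z$ is a positive, norm-one operator on $L_1$ with $\Theta_z(g_n)=z_n g_n$. One then builds, on the (doubled) Gaussian space, the analogue of a Riesz product --- morally $\Theta$ applied to an exponential generating function, which remains a genuine $L_1$-density precisely because the $\Theta_z$'s are positive contractions and so cure the non-integrability of a naive product of unbounded factors $1+\bar z_n g_n g_n'$ --- whose Gaussian moments against the products $g_{n,k}g_{n,l}$ realise any $(c_n)$ in the unit ball of $\ell_\infty$. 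Transporting this density through $u$ produces, for each such $(c_n)$, a function $\psi\in L_1(T\times T)$ with $\int\psi\,\overline{\varphi_n(t_1)\varphi_n(t_2)}\,dm\otimes dm=c_n$ and $\|\psi\|_1\le C$, which by Hahn--Banach duality is exactly the assertion that $\{\varphi_n(t_1)\varphi_n(t_2)\}$ is Sidon in $L_\infty(T\times T,m\otimes m)$.

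The main obstacle is this transport and the control of the error terms. On a group, quasi-independence kills all but the wanted coefficients of the Riesz product; here the ``errors'' --- arising both from the non-multiplicativity of $u$ and from the higher-order terms $g_{n_1,k}g_{n_2,k}\cdots$ in the Gaussian Riesz product --- must be absorbed, and it is exactly this absorption that consumes one full extra tensor factor (the independent copy carrying the primed variables), landing us on $\otimes^2$ rather than $\otimes^1$-Sidon. Making the ``Gaussian Riesz product'' rigorous through the $\Theta_z$-calculus and checking that the transported $L_1$-norm stays bounded uniformly in $N$ and in $(c_n)$ is where essentially all the analytic work lies; the rest is soft (Hahn--Banach and Proposition \ref{r42} on the duality side, Proposition \ref{p8} and the comparison theorems on the probabilistic side).
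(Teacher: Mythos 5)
Your two-step framing has a genuine gap. Step~2, ``a randomly Sidon uniformly bounded orthonormal system is $\otimes^2$-Sidon,'' is not available: the paper explicitly records (in the remark following Theorem~\ref{mr1}, citing \cite{Pi3}) that for uniformly bounded orthonormal systems, randomly Sidon is equivalent to $\otimes^k$-Sidon only for $k\ge 4$, and that the cases $k=2,3$ are \emph{open}. So even granting Step~1, the plan as stated does not reach $\otimes^2$-Sidon. The paper's proof bypasses randomly Sidon entirely; it never passes through any Sidon-type property of the un-tensored system. (The Sudakov-minoration route you sketch for Step~1 is also not used; Theorem~\ref{th1} plays no role here.)

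Your description of the core mechanism is, however, substantially right in spirit: $C$-domination by Gaussians via Proposition~\ref{p8}, the Mehler kernel on $\P\times\P$ playing the role of a ``Gaussian Riesz product,'' the operators $\Theta_z$ of Lemma~\ref{cm} replacing convolution by $\mu_z$ to tune the coefficients, and transport through $u$. This is exactly the content of Theorem~\ref{nt1}. What you are missing is the precise functional-analytic form of that theorem and how the error is handled. The paper does \emph{not} produce a density $\psi\in L_1(T\times T)$ with moments exactly $c_n$: the Mehler kernel gives $K_\d=1\otimes1+\d\sum g_n\otimes g_n+R$, and dividing by $\d$ yields a decomposition $\sum z_n\varphi_n\otimes\varphi_n=t+r$ with the two pieces controlled in \emph{different} tensor norms, $\|t\|_\wedge\le Cw(\d)$ and $\|r\|_\vee\le C\d$. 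The Sidon bound then comes from pairing both sides against $f=\sum a_n\varphi_n\otimes\varphi_n$: the $t$-term is dominated by $\|t\|_\wedge\|f\|_\infty$, while the $r$-term contributes $\sum|a_n|\,|\langle r,\ovl{\varphi_n}\otimes\ovl{\varphi_n}\rangle|\le C\d\,(C')^2\sum|a_n|$, which is absorbed back into the left-hand side once $\d$ is chosen with $C\d(C')^2<1$. Without this $(\wedge,\vee)$ split and the final absorption, the Hahn--Banach endgame you describe does not close; and the appearance of $\otimes^2$ is simply because the Mehler kernel intrinsically lives on a product space, not because an extra factor is ``consumed by error absorption.''
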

\begin{rem}\label{ss} The preceding result (as well as the previous one
obtaining $\otimes^5$-Sidon) implies the result
stated in Theorem \ref{rud}
that for subsets of $\hat G$ ($\hat G$
discrete Abelian group) subgaussian implies Sidon.
Indeed, if the functions $\varphi_n$
are characters then 
the identity $\varphi_n(t_1\cdots t_k)=\varphi_n(t_1) \cdots\varphi_n(t_k)$
shows that for characters $\otimes^k$-Sidon $\Rightarrow$ Sidon.
 \end{rem}
 
 The key to the proof of Theorem \ref{mr1}
 is the next statement, for which we need to recall the definitions of the
 projective and injective tensor norms, respectively 
 $\|\ \|_{\wedge}$ and $\|\ \|_{\vee}$ on
 the  algebraic tensor product $L_1(m_1)\otimes L_1(m_2)$
 (here $(T_1,m_1),(T_2,m_2)$ are arbitrary measure spaces).
 Let $T\in L_1(m_1)\otimes L_1(m_2)$
 say $T=\sum x_j\otimes y_j$
we set
 $$\|T\|_\wedge=\int |\sum x_j (t_1) y_j (t_2)|dm_1(t_1)dm_2(t_2)$$
 $$\|T\|_\vee=\sup \{|\sum \langle x_j ,\psi_1\rangle   \langle y_j ,\psi_2\rangle |\mid \|\psi_1\|_\infty \le 1,
 \|\psi_2\|_\infty\}.$$ 
 Note that the completion of $L_1(m_1)\otimes L_1(m_2)$ with respect to
  $\|\ \|_{\wedge}$ can be identified isometrically to 
  $L_1(m_1\times m_2)$.

\begin{thm}\label{nt1} Let $(T ,m ) $
 be a probability space.
 Let $(g_n)$ be an i.i.d. sequence of normalized $\R$-Gaussian  random
 variables.
  For any $0<\d<1$ there is 
  $  w(\d)>0$ for which the following property holds.
  Let $\{\varphi _n\mid 1\le n\le N\}\subset   L_1(m)$
  be any system that is 
     $C$-dominated by $\{g_n\mid 1\le n\le N\}$.  Then,
     for any $(z_n)\in \C^N$ with $|z_n|\le 1$, there is a decomposition in $L_1(m)\otimes L_1(m)$
  of the form
    \begin{equation}\label{n0}\sum\nolimits_1^N z_n \varphi_n \otimes \varphi_n=t+r\end{equation}
 satisfying
  \begin{equation}\label{n1}\|t\|_{\wedge}\le Cw(\d)\quad \text{   and   }\quad  \|r\|_{\vee}\le C\d.\end{equation}
       \end{thm}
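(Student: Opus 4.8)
The plan is to reduce the statement to the Gaussian system $(g_n)$ and there to extract the splitting from the Mehler (Ornstein--Uhlenbeck) kernel of Section~\ref{meh}. Since $\{\varphi_n\}$ is $C$-dominated by $\{g_n\}$, there is $u\colon L_1(\P)\to L_1(m)$ with $\|u\|\le C$ and $u(g_n)=\varphi_n$, so that $\sum_1^N z_n\,\varphi_n\otimes\varphi_n=(u\otimes u)\bigl(\sum_1^N z_n\,g_n\otimes g_n\bigr)$. Because $u\otimes u$ is bounded on $L_1(\P)\otimes L_1(\P)$ both for $\|\cdot\|_\wedge$ and for $\|\cdot\|_\vee$, with norm controlled by the domination constant, it is enough to produce, for each $(z_n)$ with $|z_n|\le1$, a decomposition
$$\sum\nolimits_1^N z_n\,g_n\otimes g_n=\tau+\rho ,\qquad \|\tau\|_\wedge\le w(\delta),\quad \|\rho\|_\vee\le\delta ,$$
and then set $t:=(u\otimes u)\tau$, $r:=(u\otimes u)\rho$.

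To build $\tau$ and $\rho$, fix $\delta\in(0,1)$ and recall from Section~\ref{meh} that $T_\delta=\sum_{d\ge0}\delta^dP_d$ has a \emph{positive} kernel $K_\delta$ with $\|K_\delta\|_{L_1(\P\times\P)}=1$, and that, written out in the (normalized) Hermite basis, $K_\delta=1\otimes1+\delta\sum_n g_n\otimes g_n+D_\delta$, where $D_\delta:=\sum_{d\ge2}\delta^d\sum_{|\alpha|=d}h_\alpha\otimes h_\alpha$ is the ``degree $\ge2$'' part. By Lemma~\ref{cm} there is, for real $z_n\in[-1,1]$, a positive contraction $\Theta_z$ on $L_1(\P)$ with $\Theta_z 1=1$ and $\Theta_z g_n=z_n g_n$; the general case $|z_n|\le1$ follows from this at the cost of an absolute constant by treating real and imaginary parts separately. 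Applying $\Theta_z\otimes\mathrm{Id}$ and rearranging, I would take
$$\tau:=\delta^{-1}\bigl((\Theta_z\otimes\mathrm{Id})K_\delta-1\otimes1\bigr),\qquad \rho:=-\,\delta^{-1}(\Theta_z\otimes\mathrm{Id})D_\delta ,$$
which sum to $\sum_n z_n\,g_n\otimes g_n$ since $(\Theta_z\otimes\mathrm{Id})(K_\delta-D_\delta)=1\otimes1+\delta\sum_n z_n\,g_n\otimes g_n$; the value $w(\delta)=2/\delta$ will serve.

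The bound on $\tau$ is immediate: $\|\cdot\|_\wedge$ on $L_1(\P)\otimes L_1(\P)$ is the $L_1(\P\times\P)$ norm, $\Theta_z\otimes\mathrm{Id}$ is a contraction for it, and $\|K_\delta\|_{L_1}=\|1\otimes1\|_{L_1}=1$, so $\|\tau\|_\wedge\le 2/\delta$, uniformly in $N$. The substance of the proof is the bound $\|\rho\|_\vee\le\delta$, i.e.\ $\|D_\delta\|_\vee\le\delta^2$. For $\psi_1,\psi_2$ in the unit ball of $L_\infty(\P)$, the fact that the degree-$d$ kernel reproduces the self-adjoint projection $P_d$ gives $\langle D_\delta,\psi_1\otimes\psi_2\rangle=\sum_{d\ge2}\delta^d\langle P_d\psi_1,P_d\psi_2\rangle$, whence
$$\bigl|\langle D_\delta,\psi_1\otimes\psi_2\rangle\bigr|\le\delta^2\sum_{d\ge0}\|P_d\psi_1\|_2\,\|P_d\psi_2\|_2\le\delta^2\,\|\psi_1\|_2\,\|\psi_2\|_2\le\delta^2$$
by Cauchy--Schwarz (first in the index $d$, then on the probability space, using $\|\cdot\|_2\le\|\cdot\|_\infty$). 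Hence $\|\rho\|_\vee\le\delta^{-1}\cdot\delta^2=\delta$, and pushing $\tau,\rho$ forward by $u\otimes u$ finishes the argument.

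I expect the one genuine point to be this injective-norm estimate: it is exactly where the spectral gap of the Ornstein--Uhlenbeck semigroup is used — the degree-$\ge2$ chaos is suppressed by $\delta^2$, precisely enough to survive division by $\delta$ — and, combined with the positivity and $L_1$-normalization of $K_\delta$ (which control the large $\|\cdot\|_\wedge$-piece), it is what lets $K_\delta$ play the role that Riesz/Fej\'er products play in the classical group setting. The remaining care is bookkeeping: reducing complex coefficients to the real operators $\Theta_z$ of Lemma~\ref{cm}, and keeping the dependence on $C$ linear.
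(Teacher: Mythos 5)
Your argument follows the paper's own proof almost step for step: reduce to the Gaussian case via $u\otimes u$, use Lemma~\ref{cm} to absorb the coefficients $z_n$, split the Mehler kernel $K_\delta$ into $1\otimes 1 + \delta\sum g_n\otimes g_n + (\text{degree} \ge 2)$, bound the $\wedge$-part by $\|K_\delta\|_{L_1}=1$ (positivity and normalization of the kernel), and bound the $\vee$-part of the tail by the spectral gap ($\|\sum_{d\ge 2}\delta^d P_d\|_{L_2\to L_2}\le\delta^2$, which your Cauchy--Schwarz computation reproves). So the route and the key lemma are the same.

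The one point you skip is that the theorem asks for a decomposition in the \emph{algebraic} tensor product $L_1(m)\otimes L_1(m)$ (finite-rank tensors), whereas your $\tau$ and $\rho$ live only in the projective completion $L_1(\P\times\P)$: the Mehler kernel $K_\delta$ is a genuine $L_1(\P\times\P)$ function, not a finite sum of elementary tensors, and $u\otimes u$ merely transports it to $L_1(m\times m)$. This matters for the statement as written, and in particular one wants $\|\cdot\|_\vee$ to be evaluated on an honest element of $L_1\otimes L_1$. The paper repairs this by composing with a finite-rank operator $v:L_1(\P)\to L_1(\P)$ of norm $\le 2$ that is the identity on $\mathrm{span}[g_1,\dots,g_N]$ (a standard property of $L_1$-spaces), replacing $(\tau,\rho)$ by $((v\otimes \mathrm{Id})\tau,(v\otimes \mathrm{Id})\rho)$ at the harmless cost of a factor $2$; you should add this step. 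Everything else checks out, including the remark that treating real and imaginary parts and the factor-of-two losses along the way can be absorbed by replacing $\delta$ with $\delta/2$.
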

 \begin{proof} It clearly suffices to prove this
in the case $\varphi _n=g_n$ and $C=1$ (indeed, the classical properties of tensor products
allow us to pass from $g_n$ to $\varphi _n$). Moreover,
treating separately $\sum\n_1^N \Re(z_n) \varphi_n \otimes \varphi_n$
and $\sum\n_1^N \Im(z_n) \varphi_n \otimes \varphi_n$,
we may reduce to the case when the $z_n$'s are in $[-1,1]$.
But then, by Lemma \ref{cm}    there is an operator
$\Theta_z:\ L_1(\P) \to L_1(\P)$ with norm $1$ such that 
$T_z(g_n)=z_n g_n$. Using this, we can reduce to the case
when $z_n=1$ for all $n$.
We will show that   Theorem \ref{nt1} can be easily derived
from the following\\
\noindent{\bf Claim:} for any $0<\d<1$
there is $\Phi\in L_1(\P\times \P)$ with $\|\Phi\|_{L_1(\P\times \P)}=1$
such that 
$$\Phi =1\otimes 1 + \d \sum\nolimits_1^N   g_n \otimes g_n
+ R$$
where $R$ viewed as an operator
on $L_2(\P)$ has norm $\le \d^2$.
 This claim is immediate from the discussion in \S \ref{meh}. We just take
for $\Phi$  the Mehler kernel and note that
 $P_1$ can be identified with $\sum\nolimits_1^N   g_n \otimes g_n$ and we have
 $\|\sum\n_{d\ge 2} \d^d
P_d:\ L_2(\P)\to L_2(\P)\|\le \d^2$.\\
From the claim we deduce
$$\sum\nolimits_1^N   g_n \otimes g_n= t'+r'$$
with $t'=(\Phi-1\otimes 1)/\d$ and $r'=-R/\d$. Then we have\\
\centerline{
$\|t'\|_{L_1(\P\times \P)}\le 2/\d$ and
$\|r'\|_{\vee}\le \|r:\ L_2(\P)\to L_2(\P)\|\le \d$.}
The only problem is that $t$ (and hence also $r$) are in the space
$L_1(\P\times \P)$ and we want them to be in 
$L_1(\P)\otimes L_1(\P)$. In other words
we want the associated operators to be of finite rank.
This can be fixed like this: it is a well known property of $L_1$-spaces that
for any $\vp>0$ and any finite dimensional subspace $E\subset L_1$
there is a finite rank operator $v:\ L_1\to L_1$ with $\|v\|<1+\vp$
that is the identity on $E$.
We apply this to $E={\rm span}[g_n\mid 1\le n\le N]$ with
(say) $\vp=1$, and then we set
$t=(v\otimes Id)(t')$ and
$r=(v\otimes Id)(r')$. This gives us 
finite rank tensors satisfying the desired conclusion
with $\|t\|_\wedge\le 4/\d$ and  $\|r\|_\vee \le 2\d$. Since we may trivially
replace $\d$ by $\d/2$, the proof is complete.
\end{proof}
\begin{proof}[Proof of Theorem \ref{mr1}] 
Let $C'=\sup\n_n\| \varphi_n\|_\infty$.
Note that $(\varphi_n)$ is subgaussian iff $(\ovl{\varphi_n})$
  also is, with the same constant.
By  Proposition \ref{p8}, any subgaussian system
is $C$-dominated by $(g_n)$ for some $C$.
   Let $z_n\in \T$ be such that $|a_n|=\vp_n a_n$.
      Let
      $\sum z_n \ovl{\varphi_n} \otimes \ovl{\varphi_n}=t+r$ as in \eqref{n1}.
      Let $f(t_1 ,t_2)= \sum a_n \varphi_n (t_1 )\varphi_n (t_2 )$.
      We have
      $$\langle t+r,f\rangle=\int (\sum  z_n \ovl{\varphi_n} \otimes \ovl{\varphi_n}) f=\sum z_n a_n=
      \sum |a_n| .$$
      Therefore
      $$\sum |a_n| \le |\langle t,f\rangle| +  |\langle r,f\rangle|
      \le Cw(\d) \|f\|_\infty + \sum |a_n| |\langle r,\ovl{\varphi_n}\otimes \ovl{\varphi_n}\rangle|
      \le  Cw(\d) \|f\|_\infty +C\d {C'}^2  \sum |a_n| .$$
Choosing $\d$ such that $\d C {C'}^2  =1/2$
we   conclude
that 
$ (\varphi _n\otimes \varphi _n)$ is Sidon with constant $\le 2 C w(\d) $.
       \end{proof}

\begin{rem} It is proved in \cite{Pi3} that, in the situation of Theorem
\ref{mr1} $(\varphi_n)$ is randomly Sidon iff
it is $\otimes^k$-Sidon for some (or equivalently for all) $k\ge 4$.
This extends Rider's Theorem \ref{riri}  to bounded orthonormal systems.
Here, the cases $k=2$  and $k=3$ remain open.
  \end{rem}
  \begin{rem} 
  Let $(\varphi_n)$ be uniformly bounded and orthonormal.
  The same interpolation argument alluded to
  in Remark \ref{ia} shows that  if $(\varphi_n)$ is subgaussian
  (or if it merely satisfies the analogue of (iv) in Theorem \ref{erd})
  then for any $1<p<2$ there is a constant $C_p$
  such that for any $f=\sum a_n \varphi_n$ in its linear span
  we have
   \begin{equation}\label{f74}  \|f\|_{\psi_{p'}} \le C_p (\sum  |a_n|^p)^{1/p}.\end{equation}
  Actually, one can even prove
  $  \|f\|_{\psi_{p'}} \le C_p \|(a_n)\|_{p,\infty},$
  where $\|(a_n)\|_{p,\infty}=\sup\n_{n\ge 1} n^{1/p} a_n^*$
  (here $a_1^*\ge a_2^*\ge\cdots\ge a_n^*\ge\cdots$ is the non-increasing rearrangement
  of $(|a_n|)$.
  
  \medskip
  
\noindent{\bf Problem:} Does \eqref{f74}
imply that $(\varphi_n)$ is 
$\otimes^k$-Sidon for some   $k> 1$ ?
  \end{rem}

 \end{document}